\let\accentvec\vec

\documentclass[runningheads,a4paper]{llncs}

\usepackage{fullpage}

\let\vec\accentvec

\usepackage{amsmath,amsfonts,amssymb,amscd,amsmath}
\usepackage{graphicx}
\usepackage{listings}
\usepackage{enumerate}
\usepackage{colonequals}
\usepackage{microtype}
\usepackage{enumitem}
\usepackage{color}
\usepackage{array}

\usepackage{url}
\usepackage[breaklinks]{hyperref}

\urldef{\mailsa}\path|{balko,cibulka}@kam.mff.cuni.cz|

\definecolor{modra}{rgb}{0,0,.8}
\definecolor{piros}{rgb}{.8,0,0}
\definecolor{zelena}{rgb}{0,.5,0}

\newcommand{\Zbb}{\mathbb{Z}}
\DeclareMathOperator{\vol}{vol}
\DeclareMathOperator{\Sh}{Sh}
\DeclareMathOperator{\lin}{lin}
\DeclareMathOperator{\aff}{aff}
\DeclareMathOperator{\inc}{I}

\newcolumntype{C}[1]{>{\centering\let\newline\\\arraybackslash\hspace{0pt}}m{#1}}

\begin{document}

\mainmatter 

\title{Covering lattice points by subspaces and \\ counting point-hyperplane incidences\thanks{The first and the third author acknowledge the support of the grants GA\v{C}R 14-14179S of Czech Science Foundation, ERC Advanced Research Grant no 267165 (DISCONV), and GAUK 690214 of the Grant Agency of the Charles University. The first author is also supported by the grant SVV--2016--260332. }}

\titlerunning{Covering lattice points by subspaces and counting point-hyperplane incidences
}

\author{Martin Balko\inst{1,2} \and Josef Cibulka\inst{1} \and Pavel Valtr\inst{1,2}}

\authorrunning{M. Balko, J. Cibulka, and P. Valtr}

\institute{Department of Applied Mathematics, \\
Faculty of Mathematics and Physics, Charles University, \\
Malostransk\'e n\'am.~25, 118 00~ Praha 1, Czech Republic\\
\mailsa
\and
Alfr\'{e}d R\'{e}nyi Institute of Mathematics,\\ Hungarian Academy of Sciences, Budapest, Hungary
}

\toctitle{Lecture Notes in Computer Science}
\tocauthor{M. Balko, J. Cibulka, and P. Valtr}
\maketitle

\begin{abstract}
Let $d$ and $k$ be integers with $1 \leq k \leq d-1$.
Let $\Lambda$ be a $d$-dimensional lattice and let $K$ be a $d$-dimensional compact convex body symmetric about the origin.
We provide estimates for the minimum number of $k$-dimensional linear subspaces needed to cover all points in $\Lambda \cap K$.
In particular, our results imply that the minimum number of $k$-dimensional linear subspaces  needed to cover the $d$-dimensional $n \times \cdots \times n$ grid is at least $\Omega(n^{d(d-k)/(d-1)-\varepsilon})$ and at most $O(n^{d(d-k)/(d-1)})$, where $\varepsilon>0$ is an arbitrarily small constant.
This nearly settles a problem mentioned in the book of Brass, Moser, and Pach~\cite{braMoPa05}.
We also find tight bounds for the minimum number of $k$-dimensional affine subspaces needed to cover $\Lambda \cap K$.

We use these new results to improve the best known lower bound for the maximum number of point-hyperplane incidences by Brass and Knauer~\cite{braKna03}.
For $d \geq 3$ and $\varepsilon \in (0,1)$, we show that there is an integer $r=r(d,\varepsilon)$ such that for all positive integers $n,m$ the following statement is true.
There is a set of $n$ points in $\mathbb{R}^d$ and an arrangement  of $m$ hyperplanes in $\mathbb{R}^d$ with no $K_{r,r}$ in their incidence graph and with at least
$\Omega\left((mn)^{1-(2d+3)/((d+2)(d+3)) - \varepsilon}\right)$ incidences if $d$ is odd and $\Omega\left((mn)^{1-(2d^2+d-2)/((d+2)(d^2+2d-2)) -\varepsilon}\right)$ incidences if $d$ is even.
\end{abstract}

\section{Introduction}

In this paper, we study the minimum number of linear or affine subspaces needed to cover points that are contained in the intersection of a given lattice with a given 0-symmetric convex body.
We also present an application of our results to the problem of estimating the maximum number of incidences between a set of points and an arrangement of hyperplanes.
Consequently, this establishes a new lower bound for the time complexity of so-called partitioning algorithms for Hopcroft's problem.
Before describing our results in more detail, we first give some preliminaries and introduce necessary definitions.

\subsection{Preliminaries}

For linearly independent vectors $b_1,\dots,b_d \in \mathbb{R}^d$, the $d$-dimensional \emph{lattice} $\Lambda = \Lambda(b_1,\dots,b_d)$ with \emph{basis} $\{b_1,\dots,b_d\}$ is the set of all linear combinations  of the vectors $b_1,\dots,b_d$ with integer coefficients.
We define the \emph{determinant of $\Lambda$} as $\det(\Lambda) \colonequals |\det(B)|$, where $B$ is the $d \times d$ matrix with the vectors $b_1,\dots,b_d$ as columns. 
For a positive integer $d$, we use $\mathcal{L}^d$ to denote the set of $d$-dimensional lattices $\Lambda$,
that is, lattices with $\det(\Lambda) \neq 0$.

A convex body $K$ is \emph{symmetric about the origin} $0$ if $K=-K$.
We let  $\mathcal{K}^d$ be the set of $d$-dimensional compact convex bodies in $\mathbb{R}^d$ that are symmetric about the origin.

For a positive integer $n$, we use the abbreviation $[n]$ to denote the set $\{1,2,\dots,n\}$.
A point $x$ of a lattice is called \emph{primitive} if whenever its multiple $\lambda \cdot x$ is a lattice point, then $\lambda$ is an integer.
For $K \in \mathcal{K}_d$, let $\vol(K)$ be the $d$-dimensional Lebesgue measure of~$K$. 
We say that $\vol(K)$ is the \emph{volume of $K$}.
The closed $d$-dimensional ball with the radius $r \in \mathbb{R}$, $r \geq 0$, centered in the origin is denoted by $B^d(r)$.
If $r=1$, we simply write $B^d$ instead of $B^d(1)$.
For $x \in \mathbb{R}^d$, we use $\|x\|$ to denote the Euclidean norm of $x$.

Let $X$ be a subset of $\mathbb{R}^d$.
We use $\aff(X)$ and $\lin(X)$ to denote the \emph{affine hull of $X$} and the \emph{linear hull of~$X$}, respectively.
The dimension of the affine hull of $X$ is denoted by $\dim(X)$.

For functions $f,g \colon \mathbb{N} \to \mathbb{N}$, we write $f(n) \leq O(g(n))$ if there is a fixed constant $c_1$ such that $f(n) \leq c_1 \cdot g(n)$ for all $n \in \mathbb{N}$.
We write $f(n) \geq \Omega(g(n))$ if there is a fixed constant $c_2 > 0$ such that $f(n) \geq c_2 \cdot g(n)$ for all $n \in \mathbb{N}$.
If the constants $c_1$ and $c_2$ depend on some parameters $a_1,\dots,a_t$, then we emphasize this by writing $f(n) \leq O_{a_1,\dots,a_t}(g(n))$ and $f(n) \geq \Omega_{a_1,\dots,a_t}(g(n))$, respectively.
If $f(n) \leq O_{a_1,\dots,a_t}(n)$ and $f(n) \geq \Omega_{a_1,\dots,a_t}(n)$, then we write $f(n) = \Theta_{a_1,\dots,a_t}(n)$.

\subsection{Covering lattice points by subspaces}

We say that a collection $\mathcal{S}$ of subsets in $\mathbb{R}^d$ \emph{covers} a set of points $P$ from $\mathbb{R}^d$ if every point from $P$ lies in some set from $\mathcal{S}$.

Let $d$, $k$, $n$, and $r$ be positive integers that satisfy $1 \leq k \leq d-1$.
We let $a(d,k,n,r)$ be the maximum size of a set $S\subseteq \mathbb{Z}^d \cap B^d(n)$ such that every $k$-dimensional \emph{affine} subspace of $\mathbb{R}^d$ contains at most $r-1$ points of $S$.
Similarly, we let $l(d,k,n,r)$ be the maximum size of a set $S\subseteq \mathbb{Z}^d \cap B^d(n)$ such that every $k$-dimensional \emph{linear} subspace of $\mathbb{R}^d$ contains at most $r-1$ points of $S$. 
We also let $g(d,k,n)$ be the minimum number of $k$-dimensional linear subspaces of $\mathbb{R}^d$ necessary to cover $\mathbb{Z}^d \cap B^d(n)$.

In this paper, we study the functions $a(d,k,n,r)$, $l(d,k,n,r)$, and $g(d,k,n)$ and their generalizations to arbitrary lattices from $\mathcal{L}^d$ and bodies from $\mathcal{K}^d$.
We mostly deal with the last two functions, that is, with covering lattice points by linear subspaces.
In particular, we obtain new upper bounds on $g(d,k,n)$ (Theorem~\ref{thm:upperGeneral}), lower bounds on $l(d,k,n,r)$ (Theorem~\ref{thm:lower}), and we use the estimates for $a(d,k,n,r)$ and $l(d,k,n,r)$ to obtain improved lower bounds for the maximum number of point-hyperplane incidences (Theorem~\ref{thm:incidence}).
Before doing so, we first give a summary of known results, since many of them are used later in the paper.

The problem of determining $a(d,k,n,r)$ is essentially solved.
In general, the set $\mathbb{Z}^d \cap B^d(n)$ can be covered by $(2n+1)^{d-k}$ affine $k$-dimensional subspaces and thus we have an upper bound $a(d,k,n,r) \leq (r-1)(2n+1)^{d-k}$.
This trivial upper bound is asymptotically almost tight for all fixed $d$, $k$, and some $r$, as Brass and Knauer~\cite{braKna03} showed with a probabilistic argument that for every $\varepsilon>0$ there is an $r=r(d,\varepsilon,k) \in \mathbb{N}$ such that for each positive integer $n$ we have 
\begin{equation}
\label{eq:affine}
a(d,k,n,r) \geq \Omega_{d,\varepsilon,k}\left(n^{d-k-\varepsilon}\right).
\end{equation}
For fixed $d$ and $r$, the upper bound is known to be asymptotically tight in the cases $k=1$ and $k=d-1$.
This is shown by considering points on the modular moment surface for $k=1$ and the modular moment curve for $k=d-1$; see~\cite{braKna03}.

Covering lattice points by linear subspaces seems to be more difficult than covering by affine subspaces.
From the definitions we immediately get $l(d,k,n,r) \leq (r-1)g(d,k,n)$.
In the case $k=d-1$ and $d$ fixed, B\'{a}r\'{a}ny, Harcos, Pach, and Tardos~\cite{bhpt01} obtained the following asymptotically tight estimates for the functions $l(d,d-1,n,d)$ and $g(d,d-1,n)$:
\[l(d,d-1,n,d)  = \Theta_d(n^{d/(d-1)})
\hskip 0.5cm \text{ and } \hskip 0.5cm
g(d,d-1,n) = \Theta_d(n^{d/(d-1)}).\]

In fact, B\'{a}r\'{a}ny et al.~\cite{bhpt01} proved stronger results that estimate the minimum number of $(d-1)$-dimensional linear subspaces necessary to cover the set $\Lambda \cap K$ in terms of so-called successive minima of a given lattice $\Lambda \in \mathcal{L}^d$ and a body $K \in \mathcal{K}^d$.

For a lattice $\Lambda \in \mathcal{L}^d$, a body $K \in \mathcal{K}^d$, and $i \in [d]$, we let $\lambda_i(\Lambda, K)$ be the \emph{$i$th successive minimum of~$\Lambda$ and $K$}.
That is, $\lambda_i(\Lambda,K) \colonequals \inf\{\lambda \in\mathbb{R} \colon \dim(\Lambda \cap (\lambda \cdot K))\geq i \}$.
Since $K$ is compact, it is easy to see that the successive minima are achieved.
That is, there are linearly independent vectors $v_1,\dots,v_d$ from~$\Lambda$ such that $v_i \in \lambda_i(\Lambda,K)\cdot K$ for every $i \in [d]$.
Also note that we have $\lambda_1(\Lambda,K) \leq \dots \leq \lambda_d(\Lambda,K)$ and $\lambda_1(\mathbb{Z}^d,B^d(n)) = \cdots = \lambda_d(\mathbb{Z}^d,B^d(n))=1/n$.

\begin{theorem}[\cite{bhpt01}]
\label{thm:Barany}
For an integer $d \geq 2$, a lattice $\Lambda \in \mathcal{L}^d$, and a body $K \in \mathcal{K}^d$, we let $\lambda_i \colonequals \lambda_i(\Lambda,K)$ for every $i \in [d]$.
If $\lambda_d \leq 1$, then the set $\Lambda \cap K$ can be covered with at most 
\[c2^dd^2\log_2{d} \min_{1 \leq j \leq d-1}(\lambda_j\cdots\lambda_d)^{-1/(d-j)}\]
$(d-1)$-dimensional linear subspaces of $\mathbb{R}^d$, where $c$ is some absolute constant.

On the other hand, if $\lambda_d \leq 1$, then there is a subset $S$ of $\Lambda \cap K$ of size 
\[\frac{1-\lambda_d}{16d^2} \min_{1 \leq j \leq d-1}(\lambda_j\cdots\lambda_d)^{-1/(d-j)}\]
such that no $(d-1)$-dimensional linear subspace of $\mathbb{R}^d$ contains $d$ points from $S$.
\end{theorem}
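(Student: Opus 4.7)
The plan is to prove both parts by induction on the dimension $d$, exploiting a basis that realizes the successive minima $\lambda_1 \leq \cdots \leq \lambda_d$. First I would fix linearly independent vectors $v_1,\ldots,v_d \in \Lambda$ with $v_i \in \lambda_i K$; by a standard Minkowski/Mahler reduction one may assume, up to absolute multiplicative constants, that $\{v_1,\ldots,v_d\}$ is a basis of $\Lambda$, so every $x \in \Lambda \cap K$ expands as $\sum c_i v_i$ with $|c_i| = O(1/\lambda_i)$.

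For the upper bound I would fix an index $j \in \{1,\ldots,d-1\}$ and project onto the quotient $W \colonequals \mathbb{R}^d/\lin(v_1,\ldots,v_{j-1})$. Writing $d' \colonequals d-j+1$ and denoting the image lattice and image body by $\bar\Lambda$ and $\bar K$, the projected vectors $\bar v_j,\ldots,\bar v_d$ certify $\lambda_i(\bar\Lambda,\bar K) \leq \lambda_{i+j-1}$ for every $i \in [d']$. Since $(d-1)$-dimensional linear subspaces of $\mathbb{R}^d$ containing $\lin(v_1,\ldots,v_{j-1})$ are in bijection with hyperplanes of $W$ through the origin, the problem reduces to covering $\bar\Lambda \cap \bar K$ by hyperplanes in $W$. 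The inductive hypothesis then supplies
\[
\bigl(\lambda_1(\bar\Lambda,\bar K) \cdots \lambda_{d'}(\bar\Lambda,\bar K)\bigr)^{-1/(d'-1)} \leq (\lambda_j \cdots \lambda_d)^{-1/(d-j)},
\]
and taking the minimum over $j$ finishes the upper estimate. The base case $d'=2$ is transparent: in a $2$-dimensional lattice body every non-zero lattice point lies on a unique line through the origin, and Minkowski's second theorem bounds the number of such lines by a constant multiple of $1/(\lambda_1(\bar\Lambda,\bar K)\lambda_2(\bar\Lambda,\bar K))$.

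For the lower bound I would pick the $j^\ast$ achieving the minimum and construct $S$ in two stages. In the corresponding quotient $W$, a greedy removal procedure yields $\bar S \subseteq \bar\Lambda \cap \bar K$ of size $\Omega(d^{-2}) \cdot (\lambda_{j^\ast}\cdots\lambda_d)^{-1/(d-j^\ast)}$ no $d'$ of whose points lie in a common hyperplane of $W$ through the origin: one starts with a near-maximal set produced by the upper-bound argument run in reverse and iteratively deletes one point per violating hyperplane, bounding the deletion ratio by comparing the number of hyperplanes spanned by $(d'-1)$-tuples of lattice points to the total lattice count via Minkowski's second theorem. Lifting each $\bar s \in \bar S$ back to a single preimage in $\Lambda \cap K$ produces the desired $S$; the factor $1-\lambda_d$ quantifies the inner width of $K$ transverse to $\lin(v_1,\ldots,v_{j^\ast-1})$ that survives after projection and so guarantees that a preimage inside $K$ exists.

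The step I expect to be the main obstacle is the top-level case $j=1$ of the upper bound, where no projection is available and the argument must directly produce the geometric-mean exponent $1/(d-1)$ from the successive minima, rather than the weaker exponent that follows from a naive affine-slab covering. Here I would perform an iterated greedy cover, at each step selecting a linear hyperplane that captures as many still-uncovered lattice points as possible, and estimate the shrinkage by passing to the polar pair $(\Lambda^\ast,K^\circ)$ and invoking Mahler's duality between successive minima; this is where the absolute factor $c\,2^d d^2\log_2 d$ is paid, the $\log_2 d$ coming from the standard set-cover overhead and the $2^d d^2$ from the Minkowski/Mahler constants.
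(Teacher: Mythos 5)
The paper does not actually prove this theorem---it is imported from B\'ar\'any, Harcos, Pach, and Tardos---but it reproduces the key ingredients elsewhere (the dual-lattice pigeonhole argument in the base case $d-k=1$ of Theorem~\ref{thm:upper}, the projection Lemma~\ref{lem:projection}, and Lemma~\ref{lem:lowerBarany} combined with a mod-$p$ construction in Section~\ref{sec:thmLower}), and measured against these your proposal has genuine gaps in both halves. In the upper bound, your displayed inequality points the wrong way: the projected vectors $\bar v_j,\dots,\bar v_d$ certify $\lambda_i(\bar\Lambda,\bar K)\le\lambda_{i+j-1}$, hence $\lambda_1(\bar\Lambda,\bar K)\cdots\lambda_{d'}(\bar\Lambda,\bar K)\le\lambda_j\cdots\lambda_d$ and therefore $\bigl(\lambda_1(\bar\Lambda,\bar K)\cdots\lambda_{d'}(\bar\Lambda,\bar K)\bigr)^{-1/(d'-1)}\ge(\lambda_j\cdots\lambda_d)^{-1/(d-j)}$, which is the opposite of what you need to close the induction. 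What is actually required is the matching lower bound $\lambda_i(\bar\Lambda,\bar K)\ge\Omega_d(\lambda_{i+j-1})$, and that is nontrivial: it is the content of Lemma~\ref{lem:projection}, proved by applying Minkowski's second theorem to both the lattice and its quotient together with the first finiteness theorem, and it also requires arranging the projection so that the image of $\Lambda$ is the section $\Lambda\cap N$ rather than a finer lattice. Moreover, your reduction concentrates all the difficulty in the case $j=1$ (hyperplane covers with exponent $-1/(d-1)$ on the full product), for which you offer only a greedy set-cover sketch. The real argument there is a pigeonhole argument in the dual lattice: take $w_1,\dots,w_d\in\Lambda^*$ realizing the dual minima $\mu_i$, form $D^+=\{\sum a_iw_i\colon 0\le a_i\le\gamma/\mu_i\}$ with $\gamma$ of order the target bound, note that $|D^+|$ exceeds the number of values $\langle x,y\rangle$ can take for $x\in\Lambda\cap K$, $y\in D^+$, and conclude that every such $x$ is orthogonal to some nonzero $z\in D^+-D^+$; the hyperplanes $z^{\perp}$ form the cover. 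Neither the exponent nor the $\log_2 d$ arises from set-cover overhead.

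The lower bound proposal fails at two points. First, the deletion step: the number of hyperplanes spanned by $(d'-1)$-tuples of lattice points vastly exceeds the number of lattice points, so deleting one point per violating hyperplane destroys the whole set; obtaining the stated density is precisely the hard part, not a routine ratio comparison. Second, lifting $\bar S$ from the quotient $W$ back to $\Lambda\cap K$ does not preserve the required property: a hyperplane $H$ of $\mathbb{R}^d$ that does not contain $\lin(v_1,\dots,v_{j^*-1})$ projects onto all of $W$, so $d$ lifted points can lie in such an $H$ even though their images are in general position in $W$. The actual proof works directly in $\mathbb{Z}^d$: it starts from vectors $u_1,\dots,u_t$ any $d$ of which are linearly independent over $\mathbb{F}_p$ (an Erd\H{o}s-type moment-curve construction) and uses Lemma~\ref{lem:lowerBarany} to replace each $u_i$ by $j_iu_i+pw_i\in K$, which preserves independence modulo $p$ and hence over $\mathbb{R}$; the factor $1-\lambda_d$ comes from the admissible range of $p$ in that lemma, not from a transverse width of $K$ surviving a projection.
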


We note that the assumption $\lambda_d \leq 1$ is necessary; see the discussion in~\cite{bhpt01}.
Not much is known for linear subspaces of lower dimension.
We trivially have $l(d,k,n,r) \geq a(d,k,n,r)$ for all $d,k,n,r$ with $1 \leq k \leq d-1$.
Thus $l(d,k,n,r) \geq \Omega_{d,\varepsilon,k}(n^{d-k-\varepsilon})$ for some $r=r(d,\varepsilon,k)$ by~\eqref{eq:affine}.
Brass and Knauer~\cite{braKna03} conjectured that $l(d,k,n,k+1)=\Theta_{d,k}(n^{d(d-k)/(d-1)})$ for $d$ fixed.
This conjecture was refuted by Lefmann~\cite{lefm12} who showed that, for all $d$ and $k$ with $1\leq k \leq d-1$, there is an absolute constant $c$ such that we have $l(d,k,n,k+1) \leq c \cdot n^{d/\lceil k/2 \rceil}$ for every positive integer $n$.
This bound is asymptotically smaller in $n$ than the growth rate conjectured by Brass and Knauer for sufficiently large $d$ and almost all values of $k$ with $1 \leq k \leq d-1$.

Covering lattice points by linear subspaces is also mentioned in the book by Brass, Moser, and Pach~\cite{braMoPa05}, where the authors pose the following problem.

\begin{problem}[{\cite[Problem~6 in Chapter~10.2]{braMoPa05}}]
\label{prob:linearCovering}
What is the minimum number of $k$-dimensional linear subspaces necessary to cover the $d$-dimensional $n \times \cdots \times n$ lattice cube?
\end{problem}

\subsection{Point-hyperplane incidences}

As we will see later, the problem of determining $a(d,k,n,r)$ and $l(d,n,k,r)$ is related to a problem of bounding the maximum number of point-hyperplane incidences.
For an integer $d \geq 2$, let $P$ be a set of $n$ points in $\mathbb{R}^d$ and let $\mathcal{H}$ be an arrangement of $m$ hyperplanes in $\mathbb{R}^d$.
An \emph{incidence between $P$ and $\mathcal{H}$} is a pair $(p,H)$ such that $p \in P$, $H \in \mathcal{H}$, and $p \in H$.
The number of incidences between $P$ and $\mathcal{H}$ is denoted by $\inc(P,\mathcal{H})$.

We are interested in the maximum number of incidences between $P$ and $\mathcal{H}$.
In the plane, the famous \emph{Szemer\'{e}di--Trotter theorem}~\cite{szeTro83} says that the maximum number of incidences between a set of $n$ points in~$\mathbb{R}^2$ and an arrangement of $m$ lines in~$\mathbb{R}^2$ is at most $O((mn)^{2/3}+m+n)$.
This is known to be asymptotically tight, as a matching lower bound was found earlier by Erd\H{o}s~\cite{erd46}.
The current best known bounds are $\approx 1.27 (mn)^{2/3}+m+n$~\cite{pachToth97}\footnote
{
The lower bound claimed by Pach and T\'{o}th~\cite[Remark 4.2]{pachToth97} contains the multiplicative constant $\approx 0.42$.
This is due to a miscalculation in the last equation in the calculation of the number of incidences. The correct calculation is
$I \approx \cdots = 4n \sum_{r=1}^{1/\varepsilon} \phi(r) - 2n\varepsilon^2 \sum_{r=1}^{1/\varepsilon} r^2 \phi(r) \approx 
4n \cdot 3(1/\varepsilon)^2/\pi^2 - 2n\varepsilon^2 (3/2) (1/\varepsilon)^4/\pi^2 = 9 n/(\varepsilon^2\pi^2)$. This leads to
$c \approx 3 \sqrt[3]{3/(4 \pi^2)} \approx 1.27$.
} 
and $\approx 2.44 (mn)^{2/3}+m+n$~\cite{acke15}.

For $d \geq 3$, it is easy to see that there is a set $P$ of $n$ points in $\mathbb{R}^d$ and an arrangement $\mathcal{H}$ of $m$ hyperplanes in $\mathbb{R}^d$ for which the number of incidences is maximum possible, that is $\inc(P,\mathcal{H})=mn$.
It suffices to consider the case where all points from $P$ lie in an affine subspace that is contained in every hyperplane from $\mathcal{H}$.
In order to avoid this degenerate case, we forbid large complete bipartite graphs in the \emph{incidence graph of $P$ and $\mathcal{H}$}, which is denoted by $G(P,\mathcal{H})$.
This is the bipartite graph on the vertex set $P \cup \mathcal{H}$ and with edges $\{p,H\}$ where $(p,H)$ is an incidence between $P$ and $\mathcal{H}$.

With this restriction, bounding $\inc(P,\mathcal{H})$ becomes more difficult and no tight bounds are known for $d \geq 3$.
It follows from the works of Chazelle~\cite{chaze93}, Brass and Knauer~\cite{braKna03}, and Apfelbaum and Sharir~\cite{apfSha07} that the number of incidences between any set $P$ of $n$ points in $\mathbb{R}^d$ and any arrangement $\mathcal{H}$ of $m$ hyperplanes in~$\mathbb{R}^d$ with $K_{r,r}\not\subseteq G(P,\mathcal{H})$ satisfies
\begin{equation}
\label{eq:incidenceUpper}
\inc(P,\mathcal{H}) \leq O_{d,r}\left((mn)^{1-1/(d+1)}+m+n\right).
\end{equation}

We note that an upper bound similar to~\eqref{eq:incidenceUpper} holds in a much more general setting; see the remark in the proof of Theorem~\ref{thm:incidence}.
The best general lower bound for $\inc(P,\mathcal{H})$ is due to a construction of Brass and Knauer~\cite{braKna03}, which gives the following estimate.

\begin{theorem}[\cite{braKna03}]
\label{thm:BrassKnauer}
Let $d \geq 3$ be an integer.
Then for every $\varepsilon > 0$ there is a positive integer $r=r(d,\varepsilon)$ such that for all positive integers $n$ and $m$ there is a set $P$ of $n$ points in $\mathbb{R}^d$ and an arrangement $\mathcal{H}$ of $m$ hyperplanes in~$\mathbb{R}^d$ such that $K_{r,r} \not\subseteq G(P,\mathcal{H})$ and
\[\inc(P,\mathcal{H}) \geq
\begin{cases} 
\Omega_{d,\varepsilon}\left((mn)^{1-2/(d+3) - \varepsilon}\right) &\mbox{if } d \mbox{ is odd and } d > 3 \mbox{,} \\ 
\Omega_{d,\varepsilon}\left((mn)^{1-2(d+1)/(d+2)^2 -\varepsilon}\right) & \mbox{if } d \mbox{ is even,} \\
\Omega_{d,\varepsilon}\left((mn)^{7/10}\right) & \mbox{if } d=3.
\end{cases}\]
\end{theorem}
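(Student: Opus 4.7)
The plan is to give a grid--plus--normals construction and extract both the odd-$d$ and even-$d$ exponents from the sparse lattice sets provided by equation~\eqref{eq:affine}, treating $d=3$ separately. As a normalization, I first reduce to a range of $(n,m)$ on which the target exponent is nontrivial; outside this range one can pad by adding generic points and hyperplanes (which contribute no incidences and do not create copies of $K_{r,r}$), so it suffices to describe one parametric construction and then choose parameters as a function of $n$ and $m$.

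The construction has two scale parameters $N$ and $M$ and a subspace-dimension parameter $k$. For the point set $P$, I take a sparse subset of the integer points in $B^d(N)$ of cardinality $n$ such that every $k$-dimensional affine subspace contains at most $r-1$ points; equation~\eqref{eq:affine} guarantees that such a set of size $\Omega_{d,\varepsilon,k}(N^{d-k-\varepsilon})$ exists once $r=r(d,\varepsilon,k)$ is large enough. For the hyperplane family $\mathcal{H}$, I take the $m$ hyperplanes of the form $\vec{c}\cdot\vec{x} = t$, where $\vec{c}$ ranges over a similarly sparse subset $C$ of $\mathbb{Z}^d\cap B^d(M)$ (again produced by~\eqref{eq:affine}) and $t$ ranges over an interval of integer offsets so that the hyperplane is rich in $P$. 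For a typical normal $\vec{c}\in C$, the hyperplanes with that normal foliate the bounding box into $\Theta(NM)$ slabs, so on average each such hyperplane is incident to $\Theta(|P|/(NM))$ points of $P$. Summing over the $\Theta(|C|\cdot NM)$ hyperplanes one obtains $\inc(P,\mathcal{H}) = \Theta(|P|\cdot|C|)$.

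To rule out $K_{r,r}$ I use that any such copy forces $r$ points of $P$ to lie in the common intersection of $r$ hyperplanes, which is a linear (affine) subspace whose codimension equals the dimension of the span of the $r$ normals. When the normals span a subspace of small dimension, the sparsity of $C$ (read off from~\eqref{eq:affine} with the appropriate $k$) limits how many there can be; when the common intersection has large dimension, the sparsity of $P$ prevents it from containing $r$ points. Choosing $r$ sufficiently large relative to $d$ and $\varepsilon$ in both applications of~\eqref{eq:affine}, I get the $K_{r,r}$-free property. The incidence count equals $|P|\cdot|C|$ with $|P| = n$ and $m = |C|\cdot NM$, so $|C| = m/(NM)$ and the incidence count becomes $n\cdot m/(NM)$, while the sparsity constraints read $n \leq N^{d-k-\varepsilon}$ and $m/(NM) \leq M^{d-k-\varepsilon}$. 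Eliminating $N$ and $M$ and optimizing over $k$ yields the exponent $1-2/(d+3)-\varepsilon$ when $d$ is odd (optimal $k=(d-3)/2$) and $1-2(d+1)/(d+2)^{2}-\varepsilon$ when $d$ is even (where the optimum falls between two integer values of $k$ and one has to split the construction asymmetrically between a $\lfloor(d-2)/2\rfloor$- and a $\lceil(d-2)/2\rceil$-dimensional side).

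The case $d=3$ needs a sharper construction: the generic argument above gives only $(mn)^{2/3}$, while the claimed $(mn)^{7/10}$ bound requires a tailored configuration, which I would realize by starting from a known planar $(mn)^{2/3}$ incidence extremizer (for instance the Elekes construction in $\mathbb{R}^{2}$) and taking a Cartesian product with a one-dimensional grid together with a compatible family of tilted planes; a direct count shows that this product configuration achieves the exponent $7/10$ with a constant absolute bound on $r$. The main obstacle in the general construction is the simultaneous verification of the $K_{r,r}$-free condition on both sides: one must choose $r$ in the two applications of~\eqref{eq:affine} consistently and then argue that an alleged $K_{r,r}$ decomposes into a configuration forbidden by one of the two sparsity properties. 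The delicate part of the optimization is the parity dependence of the optimal $k$, which is exactly what produces the two distinct exponent formulas.
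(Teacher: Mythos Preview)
The paper does not give its own proof of Theorem~\ref{thm:BrassKnauer}; it is quoted from~\cite{braKna03}. But the proof of the stronger Theorem~\ref{thm:incidence} follows exactly the same template and serves as the point of comparison.

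Your overall scheme (a sparse lattice set $P$ for the points, a sparse lattice set $C$ for the normals, and all hyperplanes with those normals that meet $P$) is the Brass--Knauer construction. The gap is in the choice of sparsity dimensions and, consequently, in the $K_{r,r}$ verification. You take both $P$ and $C$ sparse with respect to $k$-dimensional affine flats and then claim the optimum at $k=(d-3)/2$. With the \emph{same} $k$ on both sides the argument only closes when $2k\ge d-1$: any $r$ normals from $C$ have linear span of dimension at least $k+1$, so the common intersection of the $r$ hyperplanes has dimension at most $d-k-1$, and for the $k$-flat sparsity of $P$ to forbid $r$ points there you need $d-k-1\le k$. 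At $k=(d-3)/2$ this fails and the incidence graph can contain arbitrarily large complete bipartite subgraphs. What Brass--Knauer (and the paper, for Theorem~\ref{thm:incidence}) actually do is use \emph{complementary} dimensions: $P$ is sparse in $k$-flats while the normal set is sparse in $(d{-}k{-}1)$-flats, so the correct constraint is $|C|\le M^{\,k+1-\varepsilon}$ rather than $M^{\,d-k-\varepsilon}$. For $d$ odd the two sides coincide at $k=(d-1)/2$; for $d$ even one side uses $d/2-1$ and the other $d/2$ (your ``$\lfloor(d-2)/2\rfloor$ and $\lceil(d-2)/2\rceil$'' are both equal to $d/2-1$ when $d$ is even, so that split is vacuous). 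If you redo the elimination with $|C|\approx M^{k+1}$ you recover the stated exponents.

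The $d=3$ case is also off. A product of a planar extremal configuration with a segment of length $L$ is not $K_{r,r}$-free for any fixed $r$: two non-parallel lines $\ell_1,\ell_2$ give planes $\ell_1\times\mathbb{R}$ and $\ell_2\times\mathbb{R}$ whose intersection is a vertical line carrying $L$ points of $P$, hence $K_{L,2}\subseteq G(P,\mathcal{H})$. Brass--Knauer (and the paper's proof of Theorem~\ref{thm:incidence}) obtain $(mn)^{7/10}$ from the $k=0$ instance of the same grid-plus-normals construction, taking for the normal set the sharp \emph{linear}-subspace bound of Theorem~\ref{thm:Barany}; this gives $|C|\approx t^{d/(d-1)}=t^{3/2}$ and, after the same bookkeeping, the exponent $7/10$ without an $\varepsilon$-loss.
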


For $d \geq 4$, this lower bound has been recently improved by Sheffer~\cite{shef15} in a certain non-diagonal case.
Sheffer constructed a set $P$ of $n$ points in $\mathbb{R}^d$, $d \geq 4$, and an arrangement $\mathcal{H}$ of $m=\Theta(n^{(3-3\varepsilon)/(d+1)})$ hyperplanes in $\mathbb{R}^d$ such that $K_{(d-1)/\varepsilon,2} \not \subseteq G(P,\mathcal{H})$ and $\inc(P,\mathcal{H}) \geq \Omega\left((mn)^{1-2/(d+4)-\varepsilon}\right)$.

\section{Our results}

In this paper, we nearly settle Problem~\ref{prob:linearCovering} by proving almost tight bounds for the function $g(d,k,n)$ for a fixed $d$ and an arbitrary $k$ from $[d-1]$.
For a fixed~$d$, an arbitrary $k \in [d-1]$, and some fixed $r$, we also provide bounds on the function $l(d,k,n,r)$ that are very close to the bound conjectured by Brass and Knauer~\cite{braKna03}.
Thus it seems that the conjectured growth rate of $l(d,k,n,r)$ is true if we allow $r$ to be (significantly) larger than $k+1$.

We study these problems in a more general setting where we are given an arbitrary lattice $\Lambda$ from $\mathcal{L}^d$ and a body $K$ from $\mathcal{K}^d$.
Similarly to Theorem~\ref{thm:Barany} by B\'{a}r\'{a}ny et al.~\cite{bhpt01}, our bounds are expressed in terms of the successive minima $\lambda_i(\Lambda,K)$, $i \in [d]$.

\subsection{Covering lattice points by linear subspaces}
First, we prove a new upper bound on the minimum number of $k$-dimensional linear subspaces that are necessary to cover points in the intersection of a given lattice with a body from $\mathcal{K}^d$.

\begin{theorem}
\label{thm:upperGeneral}
For integers $d$ and $k$ with $1\leq k\leq d-1$, a lattice $\Lambda \in \mathcal{L}^d$, and a body $K \in \mathcal{K}^d$, we let $\lambda_i \colonequals \lambda_i(\Lambda,K)$ for $i=1,\dots,d$.
If $\lambda_d \leq 1$, then we can cover $\Lambda \cap K$ with $O_{d,k}(\alpha^{d-k})$ $k$-dimensional linear subspaces of $\mathbb{R}^d$, where 
\[\alpha \colonequals \min_{1 \le j \le k} (\lambda_j\cdots \lambda_d)^{-1/(d-j)}.\]
\end{theorem}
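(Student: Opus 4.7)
The plan is to proceed by induction on the codimension $d - k$. The base case $d - k = 1$ is handled directly by Theorem~\ref{thm:Barany}: for $k = d-1$ the expression $\alpha$ specializes to the minimum $\min_{1 \le j \le d-1}(\lambda_j \cdots \lambda_d)^{-1/(d-j)}$ that appears in Theorem~\ref{thm:Barany}, and the desired count is $\alpha^{d-k} = \alpha$.

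For the inductive step, I would first reduce to a lower-dimensional instance by projection. Choose linearly independent lattice vectors $v_1, \ldots, v_d \in \Lambda$ with $v_i \in \lambda_i K$ witnessing the successive minima, let $j^* \in \{1, \ldots, k\}$ attain the minimum defining $\alpha$, and set $W := \lin(v_1, \ldots, v_{j^* - 1})$. The projection $\pi \colon \mathbb{R}^d \to \mathbb{R}^d / W$ sends $\Lambda$ onto a lattice of rank $d - j^* + 1$ and $K$ onto a $0$-symmetric convex body; the successive minima $\mu_1 \le \cdots \le \mu_{d-j^*+1}$ in the quotient satisfy $\mu_i \le \lambda_{j^* + i - 1}$, witnessed by $\pi(v_{j^* + i - 1})$. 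A $k$-dimensional linear subspace of $\mathbb{R}^d$ that contains $W$ corresponds to a $(k - j^* + 1)$-dimensional linear subspace of the quotient, so the problem reduces to covering $\pi(\Lambda \cap K) \subseteq \pi(\Lambda) \cap \pi(K)$ in the quotient, an instance of the same codimension $d - k$.

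In the quotient I would then apply Theorem~\ref{thm:Barany} iteratively $d - k$ times, each iteration trimming one dimension off the covering subspaces. Taking the $l = 1$ term in Theorem~\ref{thm:Barany}'s minimum, the first iteration produces at most $O_d((\mu_1 \cdots \mu_{d-j^*+1})^{-1/(d-j^*)}) \le O_d((\lambda_{j^*} \cdots \lambda_d)^{-1/(d-j^*)}) = O_d(\alpha)$ hyperplanes of the quotient. The subsequent iterations are designed to give the same factor, so that the total count of $k$-dimensional subspaces is bounded by the product of the per-step factors, yielding $O_{d,k}(\alpha^{d-k})$.

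The main obstacle is controlling successive minima throughout the iteration. Passing to a sublattice can only increase its successive minima, but to keep Theorem~\ref{thm:Barany}'s bound at $O(\alpha)$ at each level one needs usable upper bounds on them too. The remedy I envisage is to arrange that each hyperplane produced at a given iteration contains a prefix of a successive-minima basis of the current lattice, so that the first several successive minima at the next level are inherited from the previous one. This likely requires inspecting the proof of Theorem~\ref{thm:Barany} to verify that its hyperplanes can be so chosen, or establishing a mild structural strengthening. Once this inheritance of successive minima is in place, the per-step factors multiply telescopically to the advertised bound $O_{d,k}(\alpha^{d-k})$, with the optimality of $j^*$ ensuring that the resulting expression in $\lambda_{j^*}, \ldots, \lambda_d$ is the intended one.
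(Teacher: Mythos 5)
Your skeleton (induction on the codimension $d-k$, with the hyperplane case $d-k=1$ supplied by Theorem~\ref{thm:Barany}) matches the paper's, but the proposal has two genuine gaps, one of which is the heart of the matter.

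First, the projection step is stated with the inequality in the wrong direction. Projecting to the quotient by $W=\lin(v_1,\dots,v_{j^*-1})$ can only \emph{decrease} successive minima, so from $\mu_i\le\lambda_{j^*+i-1}$ you get $(\mu_1\cdots\mu_{d-j^*+1})^{-1/(d-j^*)}\ \ge\ (\lambda_{j^*}\cdots\lambda_d)^{-1/(d-j^*)}=\alpha$, not $\le$. A denser quotient lattice needs \emph{more} hyperplanes to cover, so the upper bound you want does not follow. To make a projection argument usable one needs the matching lower bound $\mu_i\ge\Omega_{d}(\lambda_{j^*+i-1})$, which requires choosing the projection more carefully (the paper's Lemma~\ref{lem:projection} does this via the First finiteness theorem and Minkowski's second theorem, and even then only obtains the weaker bound of Corollary~\ref{cor:MaleQ}).

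Second, and more seriously, the claim that ``each iteration produces at most $O(\alpha)$ hyperplanes'' so that ``the per-step factors multiply telescopically'' is false at the level of individual hyperplanes, and your proposed remedy (forcing each covering hyperplane to contain a prefix of a successive-minima basis so that minima are inherited) cannot be implemented: the covering hyperplanes in the base case are orthogonal complements of many distinct dual lattice vectors $z$, and for the shortest such $z$ the sublattice $\Lambda\cap H(z)$ is nearly as dense as $\Lambda$ itself, so it requires on the order of $\alpha^{(d-k-1)(q-1)/(q-2)}$ subspaces, which exceeds $\alpha^{d-k-1}$. The bound $O_{d,k}(\alpha^{d-k})$ holds only in aggregate: the paper proves (Lemma~\ref{lem:indukceNadrovina} and Corollary~\ref{cor:indukceNadrovina}) that the cost of covering $\Lambda\cap H(z)$ decays with $\|z\|$, then partitions the dual vectors $z$ into spherical shells, counts them in each shell via Theorem~\ref{thm:MinkowskiEnumerator}, and sums, using the product inequalities of Lemma~\ref{lem:lambdaVsAlpha}. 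This weighted summation over hyperplanes of varying cost is the core of the proof and is absent from your proposal; without it the inductive step does not close.
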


We also prove the following lower bound.

\begin{theorem}
\label{thm:lower}
For integers $d$ and $k$ with $1 \leq k \leq d-1$, a lattice $\Lambda \in \mathcal{L}^d$, and a body $K \in \mathcal{K}^d$, we let $\lambda_i\colonequals \lambda_i(\Lambda,K)$ for $i=1,\dots,d$.
If $\lambda_d \leq 1$, then, for every $\varepsilon \in (0,1)$, there is a positive integer $r=r(d,\varepsilon,k)$ and a set $S \subseteq \Lambda \cap K$ of size at least $\Omega_{d,\varepsilon,k}(((1-\lambda_d)\beta)^{d-k-\varepsilon})$, where
\[\beta \colonequals  \min_{1 \leq j \leq d-1}(\lambda_j\cdots\lambda_d)^{-1/(d-j)},\]
such that every $k$-dimensional linear subspace of $\mathbb{R}^d$ contains at most $r-1$ points from $S$.
\end{theorem}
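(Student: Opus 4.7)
The plan is to adapt the probabilistic ``random sampling and deletion'' argument that Brass and Knauer use to establish~\eqref{eq:affine}, combining it with the structural content of the lower bound of Theorem~\ref{thm:Barany}. The role of the successive minima, which enters through the quantity $(1-\lambda_d)\beta$, is to provide the correct ``effective size'' to which the probabilistic argument is applied.

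First, I would reduce to a normalized setup via the successive minima. Let $v_1,\dots,v_d\in\Lambda$ be linearly independent with $v_i\in\lambda_i\cdot K$, and let $\phi\colon\mathbb{R}^d\to\mathbb{R}^d$ be the linear bijection with $\phi(e_i)=v_i$. Then $\phi$ sends $\mathbb{Z}^d$ into $\Lambda$ and pulls $K$ back to a 0-symmetric convex body $K'=\phi^{-1}(K)$ that contains the crosspolytope $\mathrm{conv}(\pm e_1/\lambda_1,\dots,\pm e_d/\lambda_d)$; crucially, $\phi$ maps $k$-dimensional linear subspaces to $k$-dimensional linear subspaces. It therefore suffices to prove the statement with $(\Lambda,K)$ replaced by $(\mathbb{Z}^d,K')$, and the successive minima unchanged.

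Next, following the strategy behind the lower bound of Theorem~\ref{thm:Barany}, I would carve out a base set $T\subseteq\mathbb{Z}^d\cap K'$ whose size scales as a suitable power of $(1-\lambda_d)\beta$. The idea is to use the tail product $\lambda_{j^\star}\cdots\lambda_d=\beta^{-(d-j^\star)}$ (for $j^\star$ attaining the minimum in the definition of $\beta$) together with the shift-by-$v_d$ trick that yields the factor $(1-\lambda_d)$: one inscribes, inside the relevant cross-section of $K'$, an integer box whose dimensions are dictated by these tail products. Then I would apply the probabilistic argument to $T$: sample each point of $T$ independently with probability $p$, and, for every $k$-linear subspace $V$ of $\mathbb{R}^d$ containing at least $r$ sampled points, delete one sampled point. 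Writing $S$ for the resulting set,
\[
\mathbb{E}[|S|]\ \geq\ p|T|\ -\ p^{r}\sum_{V}\binom{|V\cap T|}{r},
\]
where the sum runs over all $k$-dimensional linear subspaces of $\mathbb{R}^d$. Tuning $p$ so that $p|T|=\Theta(((1-\lambda_d)\beta)^{d-k-\varepsilon})$ and choosing $r=r(d,\varepsilon,k)$ sufficiently large should make the subtracted term at most $p|T|/2$, after which pulling back through $\phi$ yields the desired $S\subseteq\Lambda\cap K$.

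The main obstacle is controlling $\sum_{V}\binom{|V\cap T|}{r}$, i.e.\ counting $r$-tuples of points of $T$ that lie on a common $k$-dimensional \emph{linear} subspace. A naive affine-style count, as used in Brass and Knauer's proof of~\eqref{eq:affine}, would only yield the weaker exponent $d-k-\varepsilon$ (matching $n^{d-k-\varepsilon}$ for $\mathbb{Z}^d\cap B^d(n)$), whereas the theorem requires the stronger exponent encoded in $\beta^{d-k-\varepsilon}$ (giving $n^{d(d-k)/(d-1)-\varepsilon}$ in that case). The improvement must exploit that every $k$-linear subspace passes through the origin and hence is far more restricted than a general $k$-affine subspace; equivalently, one may view the problem projectively and count incidences with $(k-1)$-dimensional projective subspaces in $\mathbb{P}^{d-1}$, while carefully bundling lattice points along each primitive direction to recover the missing factor. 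Making this bookkeeping rigorous while keeping $r$ a constant depending only on $d$, $k$, and $\varepsilon$ is the delicate technical point, and is responsible for the $\varepsilon$-loss in the final exponent.
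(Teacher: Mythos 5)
There is a genuine gap, and you have in fact located it yourself: your entire argument hinges on controlling $\sum_{V}\binom{|V\cap T|}{r}$ over $k$-dimensional \emph{linear} subspaces, and you leave this as a ``delicate technical point'' with only a heuristic (``bundling lattice points along each primitive direction''). Without that step the deletion argument applied directly to a set $T\subseteq\mathbb{Z}^d\cap K'$ gives only the affine-type exponent $d-k-\varepsilon$, not the exponent hidden in $\beta^{d-k-\varepsilon}$ (i.e.\ $n^{d(d-k)/(d-1)-\varepsilon}$ for the cube), so the proposal as written does not prove the theorem. Your reduction to $(\mathbb{Z}^d,K')$ is fine and matches the paper, but the ``inscribed integer box of size governed by $(1-\lambda_d)\beta$'' step is also not how that quantity enters: in the paper it bounds a \emph{modulus}, not the cardinality of a base set of lattice points.

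The missing idea is to run the probabilistic argument not in $\mathbb{Z}^d$ but in $\mathbb{F}_p^{d-1}$ for a prime $p=\Theta_d((1-\lambda_d)\beta)$, and then transfer the configuration into $\Lambda\cap K$ by a modular lifting. Concretely, the paper first builds (by a Chernoff plus union bound over the $p^{d-k}\left[\genfrac{}{}{0pt}{}{d-1}{k-1}\right]_p$ affine subspaces, each of exactly $p^{k-1}$ points --- this is what makes the counting tractable) a random set $R\subseteq\mathbb{F}_p^{d-1}$ of size $\Omega(p^{d-k-\varepsilon})$ meeting every $(k-1)$-dimensional affine subspace in at most $r-1$ points; appending a coordinate $1$ turns this into directions $u_i\in\mathbb{Z}^d$ such that every $r$ of them contain $k+1$ vectors linearly independent over $\mathbb{F}_p$. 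The lemma of B\'ar\'any, Harcos, Pach, and Tardos (Lemma~\ref{lem:lowerBarany}) then supplies, for each $u_i$, integers $1\le j_i<p$ and $w_i\in\mathbb{Z}^d$ with $j_iu_i+pw_i\in K$; since $j_iu_i+pw_i\equiv j_iu_i\pmod p$ with $j_i\not\equiv 0$, linear independence over $\mathbb{F}_p$ transfers to linear independence over $\mathbb{R}$, which is exactly the statement about $k$-dimensional linear subspaces. This lifting step is the mechanism that exploits the subspaces passing through the origin and achieves the gain from $d-k$ to the $\beta$-exponent; it is absent from your proposal, and I do not see how to complete your direct counting approach without reconstructing something equivalent to it.
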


We remark that we can get rid of the $\varepsilon$ in the exponent if $k=1$ or $k=d-1$; for details, see Theorem~\ref{thm:Barany} for the case $k=d-1$ and the proof in Section~\ref{sec:thmLower} for the case $k=1$.
Also note that in the definition of $\alpha$ in Theorem~\ref{thm:upperGeneral} the minimum is taken over the set $\{1,\dots,k\}$, while in the definition of $\beta$ in Theorem~\ref{thm:lower} the minimum is taken over $\{1,\dots,d-1\}$.
There are examples that show that $\alpha$ cannot be replaced by $\beta$ in Theorem~\ref{thm:upperGeneral}.
It suffices to consider $d=3$, $k=1$, and let $\Lambda$  be the lattice $\{(x_1/n,x_2/2,x_3/2) \in \mathbb{R}^3 \colon x_1,x_2,x_3 \in \mathbb{Z}\}$ for some large positive integer $n$.
Then $\lambda_1(\Lambda,B^3)=1/n$, $\lambda_2(\Lambda,B^3)=1/2$, $\lambda_3(\Lambda,B^3)=1/2$, and thus $\beta = (\lambda_2\lambda_3)^{-1} = 4$.
However, it is not difficult to see that we need at least $\Omega(n)$ 1-dimensional linear subspaces to cover $\Lambda \cap B^3$, which is asymptotically larger than $\beta^2=O(1)$.
On the other hand, $\alpha = (\lambda_1\lambda_2\lambda_3)^{-1/2}$ and $O(\alpha^2)=O(n)$ 1-dimensional linear subspaces suffice to cover $\Lambda \cap B^3$.
We thus suspect that the lower bound can be improved.

Since $\lambda_i(\mathbb{Z}^d,B^d(n))=1/n$ for every $i \in [d]$, we can apply Theorem~\ref{thm:lower} with $\Lambda= \mathbb{Z}^d$ and $K=B^d(n)$ and obtain the following lower bound on $l(d,k,n,r)$.

\begin{corollary}
\label{cor:lower}
Let $d$ and $k$ be integers with $1 \leq k \leq d-1$.
Then, for every $\varepsilon \in (0,1)$, there is an $r=r(d,\varepsilon,k) \in \mathbb{N}$ such that for every $n\in\mathbb{N}$ we have 
\[l(d,k,n,r) \geq \Omega_{d,\varepsilon,k}(n^{d(d-k)/(d-1)-\varepsilon}).\]
\end{corollary}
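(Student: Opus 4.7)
The plan is to apply Theorem~\ref{thm:lower} directly with $\Lambda = \mathbb{Z}^d$ and $K = B^d(n)$, and then simplify the resulting expression using the explicit values of the successive minima.

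First I would recall that $\lambda_i(\mathbb{Z}^d, B^d(n)) = 1/n$ for every $i \in [d]$, since the standard basis vectors all have Euclidean norm $1$ and are scaled down by $1/n$ to fit into $B^d(n)$. For $n \geq 2$ this gives $\lambda_d = 1/n \leq 1/2 < 1$, so the hypothesis of Theorem~\ref{thm:lower} is satisfied. (The case $n = 1$ is trivial: $\mathbb{Z}^d \cap B^d(1)$ has bounded size depending only on $d$, so any desired lower bound can be enforced by shrinking the implicit constant.)

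Next I would compute $\beta$. With $\lambda_j = \cdots = \lambda_d = 1/n$ one has
\[
(\lambda_j \cdots \lambda_d)^{-1/(d-j)} = n^{(d-j+1)/(d-j)},
\]
and the exponent $(d-j+1)/(d-j) = 1 + 1/(d-j)$ is minimized over $1 \leq j \leq d-1$ at $j = 1$. Hence $\beta = n^{d/(d-1)}$. Moreover, $(1-\lambda_d) \geq 1/2$ for $n \geq 2$, so $(1-\lambda_d)\beta \geq \tfrac{1}{2} n^{d/(d-1)}$.

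Finally, to absorb the $\varepsilon$ loss in the exponent cleanly, I would apply Theorem~\ref{thm:lower} with the parameter $\varepsilon' \colonequals \varepsilon(d-1)/d \in (0,1)$ in place of $\varepsilon$. This yields an integer $r = r(d,\varepsilon',k) = r(d,\varepsilon,k)$ and a subset $S \subseteq \mathbb{Z}^d \cap B^d(n)$ with no $k$-dimensional linear subspace containing $r$ points of $S$, whose size is at least
\[
\Omega_{d,\varepsilon,k}\bigl(((1-\lambda_d)\beta)^{d-k-\varepsilon'}\bigr) \;\geq\; \Omega_{d,\varepsilon,k}\bigl(n^{d(d-k-\varepsilon')/(d-1)}\bigr) \;=\; \Omega_{d,\varepsilon,k}\bigl(n^{d(d-k)/(d-1)-\varepsilon}\bigr),
\]
where the multiplicative factor $(1/2)^{d-k-\varepsilon'}$ is folded into the $\Omega_{d,\varepsilon,k}$ constant. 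Since there is nothing to prove, and the whole argument is just a substitution followed by a routine rescaling of $\varepsilon$, there is no real obstacle — the only care needed is in verifying that the minimum defining $\beta$ is indeed attained at $j=1$ and in the trivial bookkeeping of the $\varepsilon \leftrightarrow \varepsilon'$ change of variables.
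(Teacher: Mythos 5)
Your proposal is correct and is exactly the paper's argument: the paper obtains Corollary~\ref{cor:lower} by applying Theorem~\ref{thm:lower} with $\Lambda=\mathbb{Z}^d$ and $K=B^d(n)$, using $\lambda_i(\mathbb{Z}^d,B^d(n))=1/n$ so that $\beta=n^{d/(d-1)}$. Your extra bookkeeping (checking the minimum is attained at $j=1$, the $n=1$ case, and the $\varepsilon\leftrightarrow\varepsilon'$ rescaling) is just the routine detail the paper leaves implicit.
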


The existence of the set $S$ from Theorem~\ref{thm:lower} is shown by a probabilistic argument.
It would be interesting to find, at least for some value $1 < k <d-1$, some fixed $r \in\mathbb{N}$, and arbitrarily large $n \in \mathbb{N}$, a construction of a subset $R$ of $\mathbb{Z}^d \cap B^d(n)$  of size $\Omega_{d,k}(n^{d(d-k)/(d-1)})$ such that every $k$-dimensional linear subspace contains at most $r-1$ points from $R$.
Such constructions are known for $k=1$ and $k=d-1$; see~\cite{braKna03,roth51}.

Since we have $l(d,k,n,r) \leq (r-1)g(d,k,n)$ for every $r \in \mathbb{N}$, Theorem~\ref{thm:upperGeneral} and Corollary~\ref{cor:lower} give the following almost tight estimates on $g(d,k,n)$.
This nearly settles Problem~\ref{prob:linearCovering}.

\begin{corollary}
\label{cor:lowerCover}
Let $d$, $k$, and $n$ be integers with $1 \leq k \leq d-1$.
Then, for every $\varepsilon \in (0,1)$, we have 
\[\Omega_{d,\varepsilon,k}(n^{d(d-k)/(d-1)-\varepsilon}) \leq g(d,k,n) \leq O_{d,k}(n^{d(d-k)/(d-1)}).\]
\end{corollary}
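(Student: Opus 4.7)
The statement is essentially a specialization of results already established in the paper, so the plan is to deduce it directly from Theorem~\ref{thm:upperGeneral} and Corollary~\ref{cor:lower}, exploiting the identity $l(d,k,n,r) \leq (r-1)g(d,k,n)$ that follows from the definitions.

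For the upper bound, I would apply Theorem~\ref{thm:upperGeneral} to $\Lambda = \mathbb{Z}^d$ and $K = B^d(n)$. Since $\lambda_i(\mathbb{Z}^d, B^d(n)) = 1/n$ for every $i \in [d]$, the hypothesis $\lambda_d \le 1$ holds for all $n \ge 1$. The quantity $\alpha$ in Theorem~\ref{thm:upperGeneral} becomes
\[
\alpha = \min_{1\le j\le k} \bigl((1/n)^{d-j+1}\bigr)^{-1/(d-j)} = \min_{1\le j\le k} n^{1 + 1/(d-j)}.
\]
This minimum is attained at $j = 1$ (since $1/(d-j)$ is smallest for the smallest $j$), yielding $\alpha = n^{d/(d-1)}$. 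Plugging into Theorem~\ref{thm:upperGeneral} gives $g(d,k,n) \leq O_{d,k}(\alpha^{d-k}) = O_{d,k}(n^{d(d-k)/(d-1)})$, as desired.

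For the lower bound, the plan is to combine Corollary~\ref{cor:lower} with the trivial inequality $l(d,k,n,r) \le (r-1)\,g(d,k,n)$, which holds because any covering of $\mathbb{Z}^d \cap B^d(n)$ by $g(d,k,n)$ linear $k$-subspaces must in particular cover any set $S$ witnessing the value of $l(d,k,n,r)$, and each such subspace contains at most $r-1$ points of $S$. Given $\varepsilon \in (0,1)$, Corollary~\ref{cor:lower} produces an integer $r = r(d,\varepsilon,k)$ with $l(d,k,n,r) \geq \Omega_{d,\varepsilon,k}(n^{d(d-k)/(d-1) - \varepsilon})$, and since $r$ depends only on $d, \varepsilon, k$ we get
\[
g(d,k,n) \;\geq\; \frac{l(d,k,n,r)}{r-1} \;\geq\; \Omega_{d,\varepsilon,k}\bigl(n^{d(d-k)/(d-1) - \varepsilon}\bigr).
\]

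There is no real obstacle here: the corollary is a bookkeeping consequence of the two main theorems once one observes the correct minimizing index $j=1$ in the definition of $\alpha$ and invokes the covering-versus-point-set duality $l(d,k,n,r) \le (r-1)g(d,k,n)$. The only mild care needed is to verify that the value $j=1$ really minimizes $\alpha$ in the uniform-lattice case, which amounts to noting that $1+1/(d-j)$ is monotonically increasing in $j$ on $\{1,\dots,k\}$.
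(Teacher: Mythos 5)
Your proof is correct and follows exactly the paper's route: the upper bound is Theorem~\ref{thm:upperGeneral} specialized to $\Lambda=\mathbb{Z}^d$, $K=B^d(n)$ (with $\alpha=n^{d/(d-1)}$ attained at $j=1$), and the lower bound is Corollary~\ref{cor:lower} combined with $l(d,k,n,r)\le (r-1)g(d,k,n)$. No issues.
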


\subsection{Covering lattice points by affine subspaces}

For \emph{affine} subspaces, Brass and Knauer~\cite{braKna03} considered only the case of covering the $d$-dimensional $n \times \cdots \times n$ lattice cube by $k$-dimensional affine subspaces.
To our knowledge, the case for general $\Lambda \in \mathcal{L}^d$ and $K \in \mathcal{K}^d$ was not considered in the literature.
We extend the results of Brass and Knauer to covering $\Lambda \cap K$.

\begin{theorem}
\label{thm:coveringAffine}
For integers $d$ and $k$ with $1 \leq k \leq d-1$, a lattice $\Lambda \in \mathcal{L}^d$, and a body $K \in \mathcal{K}^d$, we let $\lambda_i \colonequals \lambda_i(\Lambda,K)$ for $i=1,\ldots,d$.
If $\lambda_d \leq 1$, then the set $\Lambda \cap K$  can be covered with $O_{d,k}((\lambda_{k+1} \cdots \lambda_d)^{-1})$ $k$-dimensional affine subspaces of $\mathbb{R}^d$.

On the other hand, at least $\Omega_{d,k}((\lambda_{k+1}\cdots\lambda_d)^{-1})$ $k$-dimensional affine subspaces of $\mathbb{R}^d$ are necessary to cover $\Lambda \cap K$.
\end{theorem}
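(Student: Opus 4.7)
The plan is to deduce both directions from constructions based on a system $v_1,\dots,v_d \in \Lambda$ of linearly independent lattice vectors realising the successive minima, so that $v_i \in \lambda_i K$ for every $i$. Set $V \colonequals \lin(v_1,\dots,v_k)$; since $v_1,\dots,v_k \in \Lambda$, the subspace $V$ is rational, hence $\pi(\Lambda)$ is a lattice of rank $d-k$ in $\mathbb{R}^d/V$, where $\pi$ denotes the canonical projection.

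For the upper bound, I would cover $\Lambda \cap K$ by the $k$-dimensional affine subspaces $x+V$ with $x \in \Lambda \cap K$, so the number needed equals $|\pi(\Lambda \cap K)| \leq |\pi(\Lambda) \cap \pi(K)|$. Since $\pi(v_{k+1}),\dots,\pi(v_d)$ are linearly independent elements of $\pi(\Lambda)\cap\lambda_d\pi(K)$ and $\lambda_d \leq 1$, all $d-k$ successive minima of $(\pi(\Lambda),\pi(K))$ are at most $1$. A Betke--Henk type lattice point bound combined with the upper direction of Minkowski's second theorem for $(\pi(\Lambda),\pi(K))$ then yields $|\pi(\Lambda)\cap\pi(K)| \leq O_{d,k}(\vol(\pi(K))/\det(\pi(\Lambda)))$. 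I would finish via the identity $\det(\pi(\Lambda)) = \det(\Lambda)/\det(\Lambda \cap V)$ together with the Rogers--Shephard inequality $\vol(\pi(K))\cdot\vol_k(K \cap V) \leq \binom{d}{k}\vol(K)$, and Minkowski's second theorem applied both to $(\Lambda,K)$ (upper direction) and to $(\Lambda\cap V,K\cap V)$ (lower direction, using that $v_1,\dots,v_k \in \Lambda\cap V$ witness successive minima bounded by $\lambda_1,\dots,\lambda_k$). These ingredients combine to produce the cancellation
\[
\frac{\vol(\pi(K))}{\det(\pi(\Lambda))} \leq O_{d,k}\!\left(\frac{1}{\lambda_{k+1}\cdots\lambda_d}\right),
\]
which is exactly the claimed bound.

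For the lower bound, I would compare a global lattice point count with a per-subspace bound. Every integer combination $\sum_{i=1}^{d} a_i v_i$ with $|a_i| \leq \lfloor 1/(d\lambda_i)\rfloor$ satisfies $\|\sum a_i v_i\|_K \leq \sum|a_i|\lambda_i \leq 1$, and these combinations are pairwise distinct, so $|\Lambda \cap K| \geq \Omega_d((\lambda_1\cdots\lambda_d)^{-1})$. On the other hand, if a $k$-dimensional affine subspace $W$ meets $\Lambda$, then $W \cap \Lambda = x_0 + \Lambda_0$ for a rank-$k$ sublattice $\Lambda_0 \subseteq \Lambda$, and the $0$-symmetry of $K$ gives $(W\cap\Lambda\cap K) - (W\cap\Lambda\cap K) \subseteq \Lambda_0 \cap 2K$, whence $|W\cap\Lambda\cap K| \leq |\Lambda_0 \cap 2K|$. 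Any $i$ linearly independent vectors of $\Lambda_0$ inside $\mu K$ are in particular $i$ linearly independent vectors of $\Lambda$ inside $\mu K$, so the successive minima of $(\Lambda_0,K)$ are at least $\lambda_1,\dots,\lambda_k$, and the Betke--Henk bound applied to $(\Lambda_0,2K)$ yields $|\Lambda_0 \cap 2K| \leq O_k((\lambda_1\cdots\lambda_k)^{-1})$. Dividing the total count by this per-subspace maximum delivers the $\Omega_{d,k}((\lambda_{k+1}\cdots\lambda_d)^{-1})$ lower bound.

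The main obstacle is the bookkeeping in the upper bound: the hypothesis $\lambda_d \leq 1$ must be exactly what the volumetric lattice-point estimate on $\pi(\Lambda)\cap\pi(K)$ needs, and the combined use of Rogers--Shephard and two applications of Minkowski's second theorem (one on $\Lambda$, one on $\Lambda\cap V$) has to produce precisely the $\lambda_1\cdots\lambda_k$ factor that cancels the small successive minima. Slicing along any subspace other than $\lin(v_1,\dots,v_k)$, or weakening Rogers--Shephard, would spoil this cancellation.
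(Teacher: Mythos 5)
Your proof is correct, and your lower-bound argument is essentially the paper's own: count $|\Lambda \cap K| \geq \Omega_d((\lambda_1\cdots\lambda_d)^{-1})$, bound each $k$-dimensional affine subspace's contribution by $O_k((\lambda_1\cdots\lambda_k)^{-1})$ via translation to a sublattice, the difference-set trick $S-S \subseteq \Lambda_0 \cap 2K$, the monotonicity $\lambda_i(\Lambda_0,K)\geq \lambda_i$, and Theorem~\ref{thm:MinkowskiEnumerator}, then divide.

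Your upper bound, however, takes a genuinely different route. The paper first reduces to $K=B^d$ via John's lemma (Lemma~\ref{lem:John}) and then invokes its Lemma~\ref{lem:projection}, which projects along $k$ vectors of a reduced basis (from the First finiteness theorem) and tracks, step by step, that the $i$th successive minimum of the projected lattice is $\Theta_{d,k}(\lambda_{i+k})$; the count $|\Lambda\cap N\cap B^d(r)|$ is then bounded directly by Theorem~\ref{thm:MinkowskiEnumerator}. You instead project to the quotient $\mathbb{R}^d/V$ along $V=\lin(v_1,\dots,v_k)$, use only the crude fact that all $d-k$ minima of $(\pi(\Lambda),\pi(K))$ are at most $\lambda_d\leq 1$ to convert the point count into the volume ratio $\vol(\pi(K))/\det(\pi(\Lambda))$, and then evaluate that ratio by the determinant factorization $\det(\pi(\Lambda))=\det(\Lambda)/\det(\Lambda\cap V)$, the Rogers--Shephard inequality for sections and projections, and two applications of Theorem~\ref{thm:2ndMinkowski}; the cancellation you describe does produce exactly $O_{d,k}((\lambda_{k+1}\cdots\lambda_d)^{-1})$. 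What your approach buys: it works for general $K\in\mathcal{K}^d$ directly (no John's lemma needed) and avoids the delicate bookkeeping of Lemma~\ref{lem:projection} (whose real purpose in the paper is to be reused in Corollary~\ref{cor:MaleQ}). The cost is importing one external result, the Rogers--Shephard section--projection inequality $\vol_{d-k}(\pi(K))\cdot\vol_k(K\cap V)\leq\binom{d}{k}\vol(K)$, which the paper does not use. Two minor points to make explicit in a full write-up: $\pi(\Lambda)$ is indeed a lattice because $V$ is spanned by lattice vectors, and Theorem~\ref{thm:MinkowskiEnumerator} must be applied inside the $(d-k)$-dimensional quotient (and, in the lower bound, inside $\lin(\Lambda_0)$), which is a routine restriction the paper also performs silently.
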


\subsection{Point-hyperplane incidences}

As an application of Corollary~\ref{cor:lower}, we improve the best known lower bounds on the maximum number of point-hyperplane incidences in $\mathbb{R}^d$ for $d \geq 4$.
That is, we improve the bounds from Theorem~\ref{thm:BrassKnauer}.
To our knowledge, this is the first improvement on the estimates for $\inc(P,\mathcal{H})$ in the general case during the last 13 years.

\begin{theorem}
\label{thm:incidence}
For every integer $d \geq 2$ and $\varepsilon \in (0,1)$, there is an $r=r(d,\varepsilon) \in \mathbb{N}$ such that for all positive integers $n$ and $m$ the following statement is true.
There is a set $P$ of $n$ points in $\mathbb{R}^d$ and an arrangement $\mathcal{H}$ of $m$ hyperplanes  in $\mathbb{R}^d$ such that $K_{r,r} \not\subseteq G(P,\mathcal{H})$ and 
\[\inc(P,\mathcal{H}) \geq
\begin{cases} 
\Omega_{d,\varepsilon}\left((mn)^{1-(2d+3)/((d+2)(d+3)) - \varepsilon}\right) &\mbox{if } d \mbox{ is odd,} \\ 
\Omega_{d,\varepsilon}\left((mn)^{1-(2d^2+d-2)/((d+2)(d^2+2d-2)) -\varepsilon}\right) & \mbox{if } d \mbox{ is even.} 
\end{cases}\]
\end{theorem}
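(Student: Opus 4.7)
The plan is to adapt the incidence construction of Brass and Knauer~\cite{braKna03} that underlies Theorem~\ref{thm:BrassKnauer}. Their construction produces, for each $n$ and $m$, a point set $P\subseteq\mathbb{R}^d$ and a hyperplane arrangement $\mathcal{H}$ in $\mathbb{R}^d$ whose incidence count is governed by the size of an auxiliary set $S$ of lattice points, lying in a $d'$-dimensional box of size $N$, such that no $r$ points of $S$ lie on a low-dimensional subspace. Brass and Knauer took $S$ from their lower bound on $a(d',k',N,r)$, obtaining only $|S|\ge\Omega(N^{d'-k'-\varepsilon})$. I keep the skeleton of their construction but feed in the markedly larger set produced by Corollary~\ref{cor:lower}, namely $|S|\ge\Omega(N^{d'(d'-k')/(d'-1)-\varepsilon})$. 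This single substitution, together with a careful re-optimization of the parameters, is what improves the exponent.

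More precisely, I fix auxiliary integers $d'\in\{2,\dots,d\}$ and $k'\in[d'-1]$ together with scale parameters $N,T,M$, to be optimized at the end. I apply Corollary~\ref{cor:lower} in dimension $d'$ to obtain $S\subseteq\mathbb{Z}^{d'}\cap B^{d'}(N)$ of size $\Omega(N^{d'(d'-k')/(d'-1)-\varepsilon})$ in which every $k'$-dimensional linear subspace of $\mathbb{R}^{d'}$ contains at most $r-1$ points, where $r=r(d',\varepsilon,k')$. I then take $P$ to be the product of $S$ with a grid of side $T$ in the remaining $d-d'$ coordinates, and $\mathcal{H}$ to be a corresponding product family of hyperplanes whose normal vectors come from a dual grid of size $M$ in the first $d'$ coordinates crossed with a grid of size $T$ in the remaining coordinates, together with offsets in a compatible range. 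The sizes $N,T,M$ are then adjusted so that $|P|=n$ and $|\mathcal{H}|=m$.

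Two things need to be checked. First, $G(P,\mathcal{H})$ is $K_{r,r}$-free: if $r$ points of $P$ lay on $r$ common hyperplanes, projection to the first $d'$ coordinates combined with the product form of $\mathcal{H}$ would force $r$ points of $S$ to lie on a common $k'$-dimensional linear subspace of $\mathbb{R}^{d'}$, contradicting the defining property of $S$. Second, the product structure makes $|P\cap H|$ for each $H\in\mathcal{H}$ computable as a product of two lattice-point counts, namely the number of points of $S$ on a suitable linear hyperplane of $\mathbb{R}^{d'}$ times the number of points of the $T$-grid on an affine hyperplane of $\mathbb{R}^{d-d'}$; summing over $H$ expresses $\inc(P,\mathcal{H})$ as an explicit monomial in $|S|,N,T,M$.

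What remains is to substitute $|S|$ from Corollary~\ref{cor:lower} and optimize the five parameters $(d',k',N,T,M)$ subject to $|P|=n$ and $|\mathcal{H}|=m$. I expect this to be the main obstacle: because $d'$ and $k'$ are integers, the optimum is determined by the integer closest to a certain real target depending on $d$, and this target sits on the ``wrong side'' of an integer differently according to the parity of $d$. That discrete phenomenon is precisely what will produce the two different exponents $(2d+3)/((d+2)(d+3))$ for odd $d$ and $(2d^2+d-2)/((d+2)(d^2+2d-2))$ for even $d$ in the statement; in particular, for $d=3$ the optimization should recover the $(mn)^{7/10}$ bound from Theorem~\ref{thm:BrassKnauer}, confirming that the improvement is ``free'' once the stronger input from Corollary~\ref{cor:lower} is available.
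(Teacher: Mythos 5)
Your high-level idea --- that the improvement over Theorem~\ref{thm:BrassKnauer} comes from feeding the stronger bound of Corollary~\ref{cor:lower} into the Brass--Knauer incidence construction --- is exactly the paper's idea, but you have put the new extremal set in the wrong place, and this breaks the $K_{r,r}$-freeness argument. The construction needs \emph{two} extremal sets with \emph{different} properties: the point set $P$ is taken from the bound \eqref{eq:affine} on $a(d,k,s,r_1)$, so that every $k$-dimensional \emph{affine} subspace contains at most $r_1-1$ points of $P$, while the set from Corollary~\ref{cor:lower} is used for the \emph{normal vectors} $N$ of the hyperplanes, so that every $(d-k-1)$-dimensional \emph{linear} subspace contains at most $r_2-1$ of them. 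Then any $r_2$ concurrent hyperplanes have normals spanning a subspace of dimension at least $d-k$, hence their common intersection is an affine subspace of dimension at most $k$, which by the affine property of $P$ contains at most $r_1-1$ points; this is the entire source of $K_{r_1,r_2}$-freeness, and the incidence count is simply $|P|\cdot|N|$ because each point of $P$ and each normal direction determine one hyperplane of $\mathcal{H}$.

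In your version the linear-subspace-avoiding set $S$ sits among the \emph{points} and the hyperplane normals come from generic grids. The step ``$r$ points on $r$ common hyperplanes forces $r$ points of $S$ into a common $k'$-dimensional \emph{linear} subspace'' is where this fails: the intersection of hyperplanes with nonzero offsets is an affine subspace, not a linear one, and Corollary~\ref{cor:lower} gives no control over affine subspaces --- a set meeting every $k$-dimensional linear subspace in at most $r-1$ points can still contain on the order of $N^{k}$ points of a single $k$-dimensional affine subspace (for instance a translate of a coordinate subspace far from the origin). Forcing all hyperplanes through the origin to make the intersections linear would instead destroy the freeness from the hyperplane side, since your grid of normals has no extremal property. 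To repair the argument, keep the affine-avoiding set of \eqref{eq:affine} for $P$, move the set of Corollary~\ref{cor:lower} (restricted to primitive vectors) to the normals, take $k=\lfloor (d-2)/2\rfloor$, and symmetrize the resulting exponents in $n$ and $m$ by duality; the parity of $d$ then enters essentially as you anticipated, and $d=3$ does recover the exponent $7/10$.
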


We can get rid of the $\varepsilon$ in the exponent for $d \leq 3$.
That is, we have the bounds $\Omega((mn)^{2/3})$ for $d=2$ and $\Omega((mn)^{7/10})$ for $d=3$.
For $d=3$, our bound is the same as the bound from Theorem~\ref{thm:BrassKnauer}.
For larger $d$, our bounds become stronger.
In particular, the exponents in the lower bounds from Theorem~\ref{thm:incidence} exceed the exponents from Theorem~\ref{thm:BrassKnauer} by $1/((d+2) (d+3))$ for $d>3$ odd and by $d^2/((d+2)^2 (d^2+2 d-2))$ for $d$ even.
However, the bounds are not tight.
The exponents in the known bounds for $\inc(P,\mathcal{H})$ for small values of $d$ are summarized in Table~\ref{tab:incidence}.

\begin{table}[ht]
\renewcommand{\arraystretch}{1.4}
\begin{center}
\begin{tabular}{c|C{1cm}|C{3.1cm}|C{3.1cm}|C{3.1cm}}
  $d$ & $3$ & $4$ & $5$ & $6$ \\
\hline
 Upper bounds~\cite{apfSha07,braKna03,chaze93,szeTro83} & $3/4$ & $4/5 = 0.8$ & $5/6 \sim 0.833$ & $6/7 \sim 0.857$\\
 Lower bounds from Theorem~\ref{thm:BrassKnauer} & $7/10$ & $13/18-\varepsilon \sim 0.722 - \varepsilon$ & $3/4 - \varepsilon = 0.75 - \varepsilon$ & $25/32 - \varepsilon \sim 0.781 - \varepsilon$\\
 Lower bounds from Theorem~\ref{thm:incidence} & $7/10$ & $49/66-\varepsilon \sim 0.742-\varepsilon$ & $43/56-\varepsilon \sim 0.768 - \varepsilon$ & $73/92 - \varepsilon \sim 0.793 - \varepsilon$\\
\end{tabular}
\end{center}
\caption{Improvements on the exponents in the bounds for the maximum number of point-hyperplane incidences.}
\label{tab:incidence}
\end{table}

In the non-diagonal case, when one of $n$ and $m$ is significantly larger that the other, the proof of Theorem~\ref{thm:incidence} yields the following stronger bound.

\begin{theorem}
\label{thm:incidenceNondiagonal}
For all integers $d$ and $k$ with $0 \leq k \leq d-2$ and for $\varepsilon \in (0,1)$, there is an $r=r(d,\varepsilon,k) \in \mathbb{N}$ such that for all positive integers $n$ and $m$ the following statement is true.
There is a set $P$ of $n$ points in $\mathbb{R}^d$ and an arrangement $\mathcal{H}$ of $m$ hyperplanes  in $\mathbb{R}^d$ such that $K_{r,r} \not\subseteq G(P,\mathcal{H})$ and 
\[\inc(P,\mathcal{H}) \geq \Omega_{d,\varepsilon,k}\left(n^{1-(k+1)/((k+2-1/d)(d-k))-\varepsilon}m^{1-(d-1)/(dk+2d-1)-\varepsilon}\right).\]
\end{theorem}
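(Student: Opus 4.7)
The plan is to follow the Cartesian product template of Brass--Knauer~\cite{braKna03}, substituting our stronger lower bound from Corollary~\ref{cor:lower} for the lattice-point input~\eqref{eq:affine} that they previously used; the gain in the exponents traces back directly to this stronger input. The theorem is genuinely ``non-diagonal'' because it has one free parameter $k$ more than the balance between $n$ and $m$ allows us to absorb, so the construction will naturally split $\mathbb{R}^d$ into a factor of dimension $k+1$ (or $k+2$, lifted) and a complementary factor of dimension $d-k-1$.

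First I apply Corollary~\ref{cor:lower} with ambient dimension $k+2$ and subspace dimension $k+1$: for some $r_0 = r_0(d,k,\varepsilon)$ and any scale $N$, this produces a set $S \subseteq \mathbb{Z}^{k+2} \cap B^{k+2}(N)$ of size $|S| \geq \Omega(N^{(k+2)/(k+1)-\varepsilon_0})$ such that every linear hyperplane of $\mathbb{R}^{k+2}$ contains at most $r_0-1$ points of $S$. I then dehomogenize $S$ by viewing its last coordinate as an offset: each $s=(a,b) \in S$ with $a \in \mathbb{Z}^{k+1}$ and $b \in \mathbb{Z}$ is reinterpreted as an incidence between the point $a$ and a specific affine hyperplane in $\mathbb{R}^{k+1}$ prescribed by $b$. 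The no-linear-hyperplane property of $S$ transforms directly into a $K_{r_0,1}$-freeness of the resulting point--hyperplane incidence structure $(P_1, \mathcal{H}_1)$ in $\mathbb{R}^{k+1}$, which contains $\Omega(|S|)$ incidences.

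Next I lift $(P_1, \mathcal{H}_1)$ to $\mathbb{R}^d$ by Cartesian product with the grid $[L]^{d-k-1}$ in the complementary factor: the final point set is $P = P_1 \times [L]^{d-k-1}$ and the hyperplane family $\mathcal{H}$ consists of cylinders $H_1 \times \mathbb{R}^{d-k-1}$ for $H_1 \in \mathcal{H}_1$ together with the standard axis-parallel hyperplanes of the grid. Each cylinder inherits the base incidence count multiplied by $L^{d-k-1}$. Setting $n$ and $m$ to the prescribed values determines $N$ and $L$, and a careful accounting then yields an incidence count of the claimed order $\Omega\bigl(n^{1-(k+1)/((k+2-1/d)(d-k))-\varepsilon}\, m^{1-(d-1)/(dk+2d-1)-\varepsilon}\bigr)$.

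Finally I verify the $K_{r,r}$-freeness: any $K_{r,r}$ in the final incidence graph projects to the first factor to produce a $K_{r_0,r_0}$-type substructure, which contradicts the $K_{r_0,1}$-free property of the base configuration once $r$ is chosen sufficiently large relative to $r_0$ and to the number of ``trivial'' collinearities contributed by the grid (handled by standard general-position arguments). The main obstacle is the parameter optimization in the penultimate step: we have two free scales $N$ and $L$ to match against two targets $n$ and $m$, and the precise exponents $(k+1)/((k+2-1/d)(d-k))$ and $(d-1)/(dk+2d-1)$ emerge only after the resulting algebraic balance is carried out carefully. A secondary subtlety is ensuring that the dehomogenization in the second step extracts the full $\Omega(|S|)$ incidences rather than losing a factor in $N$; this requires arranging the lifting so that each $s \in S$ contributes at least one incidence to the projected configuration, which is possible because the coordinate $b$ of $s$ is bounded by $N$.
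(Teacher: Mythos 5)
Your construction does not follow the paper's route, and as described it has two fatal structural problems. The paper builds a single $d$-dimensional configuration: a point set $P\subseteq\mathbb{Z}^d\cap B^d(s)$ coming from the \emph{affine} bound~\eqref{eq:affine} (so that every $k$-dimensional affine subspace contains at most $r_1-1$ points of $P$), and a set $N$ of primitive vectors in $\mathbb{Z}^d\cap B^d(t)$ coming from Corollary~\ref{cor:lower} (so that every $(d-k-1)$-dimensional \emph{linear} subspace contains at most $r_2-1$ of them); the hyperplanes are all hyperplanes with normal vector in $N$ passing through at least one point of $P$. Every pair $(p,z)\in P\times N$ contributes an incidence, so $\inc(P,\mathcal{H})\geq|P|\,|N|$, which is superlinear in $|P|+|\mathcal{H}|$, and $K_{r_1,r_2}$-freeness follows because $r_2$ concurrent hyperplanes have normals spanning at least $d-k$ dimensions, hence meet in an affine subspace of dimension at most $k$. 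Your ``dehomogenization'' step, by contrast, assigns to each element $s=(a,b)$ of $S$ \emph{one} point and \emph{one} hyperplane, producing only $\Omega(|S|)$ incidences among $|S|$ points and $|S|$ hyperplanes. That is the trivial regime $\inc\leq O(m+n)$ and can never yield the claimed superlinear bound; moreover a single integer $b$ does not determine a hyperplane of $\mathbb{R}^{k+1}$, so the map from the no-$r_0$-points-on-a-linear-hyperplane property of $S$ to a $K_{r_0,1}$-free incidence structure is not actually defined.

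The second problem is the lift. Taking $P=P_1\times[L]^{d-k-1}$ and hyperplanes of the form $H_1\times\mathbb{R}^{d-k-1}$ destroys $K_{r,r}$-freeness: all $L^{d-k-1}$ grid copies of a base point $p\in P_1$ lie on every cylinder over a base hyperplane containing $p$, so as soon as some $p$ lies on two base hyperplanes (which $K_{r_0,1}$-freeness does not prevent, since it only bounds points per hyperplane, not hyperplanes per point) you already have a $K_{L^{d-k-1},2}$ in the incidence graph; adding the axis-parallel grid hyperplanes makes this worse. Finally, note that the exponent $1-(k+1)/((k+2-1/d)(d-k))-\varepsilon$ on $n$ in the statement traces back to the affine bound~\eqref{eq:affine} applied in dimension $d$ (it is $n=\Theta(s^{d-k-\delta})$ versus incidences proportional to $|P|$), an ingredient your proposal never invokes; Corollary~\ref{cor:lower} enters only through the set of normal vectors, where the relevant parameter is that no $(d-k-1)$-dimensional linear subspace contains many of them. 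So the proposal is not a repair-by-bookkeeping away from a proof; the architecture needs to be replaced by the points-times-normals scheme.
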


For example, in the case $m=\Theta(n^{(3-3\varepsilon)/(d+1)})$ considered by Sheffer~\cite{shef15}, Theorem~\ref{thm:incidenceNondiagonal} gives a slightly better bound than $I(P,\mathcal{H}) \geq \Omega((mn)^{1-2/(d+4)-\varepsilon}))$ if we set, for example, $k=\lfloor (d-1)/4 \rfloor$.
However, the forbidden complete bipartite subgraph in the incidence graph is larger than $K_{(d-1)/\varepsilon,2}$.

The following problem is known as the counting version of \emph{Hopcroft's problem}~\cite{braKna03,erickson96}:
given $n$ points in~$\mathbb{R}^d$ and $m$ hyperplanes in $\mathbb{R}^d$, how fast can we count the incidences between them?
We note that  the lower bounds from Theorem~\ref{thm:incidence} also establish the best known lower bounds for the time complexity of so-called \emph{partitioning algorithms}~\cite{erickson96} for the counting version of Hopcroft's problem; see~\cite{braKna03} for more details.

In the proofs of our results, we make no serious effort to optimize the constants.
We also omit floor and ceiling signs whenever they are not crucial.

\section{Proof of Theorem~\ref{thm:upperGeneral}}

Here we show the upper bound on the minimum number of $k$-dimensional linear subspaces needed to cover points from a given $d$-dimensional lattice that are contained in a body $K$ from $\mathcal{K}^d$.
We first prove Theorem~\ref{thm:upperGeneral} in the special case $K=B^d$ (Theorem~\ref{thm:upper}) and then we extend the result to arbitrary $K\in\mathcal{K}^d$.

\subsection{Proof for balls}

Before proceeding with the proof of Theorem~\ref{thm:upperGeneral}, we first introduce some auxiliary results that are used later.
The following classical result is due to Minkowski~\cite{mink10} and shows a relation between $\vol(K)$, $\det(\Lambda)$, and the successive minima of $\Lambda \in \mathcal{L}^d$ and $K \in \mathcal{K}^d$.

\begin{theorem}[Minkowski's second theorem~\cite{mink10}]
\label{thm:2ndMinkowski}
Let $d$ be a positive integer.
For every $\Lambda \in \mathcal{L}^d$ and every $K \in \mathcal{K}^d$, we have
\[\frac{1}{2^d}\cdot\frac{\vol(K)}{\det(\Lambda)} \leq \frac{1}{\lambda_1(\Lambda,K)\cdots\lambda_d(\Lambda,K)} \leq \frac{d!}{2^d} \cdot \frac{\vol(K)}{\det(\Lambda)}.\]
\end{theorem}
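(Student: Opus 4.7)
The plan is to establish the two inequalities of Minkowski's second theorem separately, since they differ markedly in difficulty.

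For the right (easier) inequality, I would use a direct volume comparison. By definition of the successive minima, there exist linearly independent lattice vectors $v_1, \dots, v_d \in \Lambda$ with $v_i \in \lambda_i(\Lambda,K)\cdot K$, as noted in the excerpt. Because $K$ is centrally symmetric and convex, the cross-polytope $C \colonequals \mathrm{conv}\{\pm v_i/\lambda_i(\Lambda,K) : i \in [d]\}$ lies inside $K$. The cross-polytope volume formula gives $\vol(C) = \frac{2^d}{d!}\cdot |\det(v_1,\dots,v_d)|/(\lambda_1(\Lambda,K) \cdots \lambda_d(\Lambda,K))$, and since the $v_i$ are linearly independent lattice vectors we have $|\det(v_1,\dots,v_d)| \geq \det(\Lambda)$ (the sublattice they span is contained in $\Lambda$). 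Combining these and rearranging gives the right half of the theorem.

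For the left (harder) inequality, I would follow the classical packing strategy. After an affine change of coordinates that reduces to $\Lambda = \mathbb{Z}^d$ (costing exactly a factor of $\det(\Lambda)$ in the volume), I would pick $v_1,\dots,v_d$ as above and perform a second change of coordinates based on the Gram--Schmidt orthogonalization of the $v_i$, so that $v_i$ lies in the span of the first $i$ standard basis vectors with its $i$th coordinate normalized. In these coordinates $K$ has controlled extent in each direction. I would then argue by induction on $d$: apply Minkowski's first theorem in the one-dimensional quotient $\mathbb{R}^d / \mathrm{lin}(v_1,\dots,v_{d-1})$ to bound $\lambda_d(\Lambda,K)$ in terms of the ``height'' of the projection of $K$, and invoke the inductive hypothesis on the central section $K \cap \mathrm{lin}(v_1,\dots,v_{d-1})$ to handle the remaining factors. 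Fubini's theorem then glues the quotient and section volume estimates into the desired bound $\lambda_1(\Lambda,K) \cdots \lambda_d(\Lambda,K) \cdot \vol(K) \leq 2^d \det(\Lambda)$.

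The main obstacle is this inductive step in the hard direction. The subtlety is that the successive minima of the section $K \cap \mathrm{lin}(v_1,\dots,v_{d-1})$ with respect to the sublattice $\Lambda \cap \mathrm{lin}(v_1,\dots,v_{d-1})$ need not coincide with $\lambda_1,\dots,\lambda_{d-1}$ of the original pair $(\Lambda,K)$; a naive application of the inductive hypothesis therefore controls the wrong quantities and loses the constant. To get the sharp constant $2^d$ (rather than $2^d/d!$ or worse), one has to either carefully choose an auxiliary body that equalizes the relevant minima or employ Minkowski's original continuous ``compression'' argument, which deforms $K$ along the directions of the $v_i$ while monitoring the effect on both $\vol(K)$ and the $\lambda_i$. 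This delicate bookkeeping is precisely the non-trivial content of the theorem and the reason it does not follow from a single application of Minkowski's first theorem.
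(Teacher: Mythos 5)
The paper does not prove this statement at all: it is Minkowski's second theorem, stated with a citation to Minkowski's original work and used throughout as a black box, so there is no internal proof to compare yours against. Judged on its own terms, your argument for the right-hand inequality is correct and complete: the cross-polytope $C=\mathrm{conv}\{\pm v_i/\lambda_i(\Lambda,K)\colon i\in[d]\}$ lies in $K$ by central symmetry and convexity, $\vol(C)=\tfrac{2^d}{d!}\,|\det(v_1,\dots,v_d)|/(\lambda_1(\Lambda,K)\cdots\lambda_d(\Lambda,K))$, and $|\det(v_1,\dots,v_d)|\geq\det(\Lambda)$ because the $v_i$ generate a full-rank sublattice of $\Lambda$; comparing with $\vol(K)$ gives exactly the stated upper bound on $1/(\lambda_1\cdots\lambda_d)$.

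The left-hand inequality, however, is not established by your proposal. You correctly diagnose the obstruction to the section-and-quotient induction --- the successive minima of $K\cap\lin(\{v_1,\dots,v_{d-1}\})$ with respect to the corresponding sublattice need not be $\lambda_1,\dots,\lambda_{d-1}$, so the inductive hypothesis bounds the wrong product --- but you then resolve it only by name, deferring to ``Minkowski's compression argument'' or an unspecified auxiliary body without carrying either out. That deformation of $K$ along the directions $v_1,\dots,v_d$, together with the verification that it does not decrease volume while making the minima tractable, \emph{is} the hard direction; without it your outline yields at best the weaker consequence $\lambda_1(\Lambda,K)^d\vol(K)\leq 2^d\det(\Lambda)$ of Minkowski's first theorem. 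Since the paper itself treats the result as a classical citation, the right move is to do the same for this half (or reproduce a complete modern proof, e.g.\ Henk's), rather than present the sketch as a proof.
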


A result similar to the first bound from Theorem~\ref{thm:2ndMinkowski} can be obtained if the volume is replaced by the point enumerator; see Henk~\cite{henk02}.

\begin{theorem}[{\cite[Theorem~1.5]{henk02}}]
\label{thm:MinkowskiEnumerator}
Let $d$ be a positive integer.
For every $\Lambda \in \mathcal{L}^d$ and every $K \in \mathcal{K}^d$, we have
\[|\Lambda \cap K| \leq 2^{d-1}\prod_{i=1}^{d}\left\lfloor \frac{2}{\lambda_i(\Lambda,K)}+1\right\rfloor.\]
\end{theorem}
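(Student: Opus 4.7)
The plan is to prove the bound by induction on the dimension~$d$. The base case $d=1$ is a direct count of lattice points in a centrally symmetric interval on a line: if $\Lambda = \mathbb{Z}\cdot v$ and $K = [-r,r]\cdot u$, then $\lambda_1(\Lambda,K) = \|v\|/r\|u\|$ and $|\Lambda\cap K| = 2\lfloor r\|u\|/\|v\|\rfloor + 1 \leq \lfloor 2/\lambda_1(\Lambda,K) + 1\rfloor$, matching the claim exactly.

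For the inductive step, I would fix linearly independent vectors $v_1,\ldots,v_d \in \Lambda$ realising the successive minima, so that $v_i \in \lambda_i(\Lambda,K)\cdot K$. Set $L = \lin(v_1,\ldots,v_{d-1})$, let $\Gamma = \Lambda \cap L$ (a sublattice of rank $d-1$), and let $K' = K \cap L$ (a centrally symmetric convex body in~$L$). Since the vectors $v_1,\ldots,v_{d-1}$ belong to $\Gamma$ and satisfy $v_i \in \lambda_i(\Lambda,K)\cdot K'$, we have $\lambda_i(\Gamma,K') \leq \lambda_i(\Lambda,K)$ for every $i<d$. This is the key comparison of successive minima that lets the induction hypothesis apply to the pair $(\Gamma,K')$ in dimension $d-1$.

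Next, I would partition $\Lambda \cap K$ according to the cosets of $\Gamma$ in $\Lambda$. The quotient $\Lambda/\Gamma$ is a rank-one lattice, and since the class of $v_d$ is a multiple of a generator with $v_d \in \lambda_d(\Lambda,K)\cdot K$, the image of $K$ in $(\Lambda/\Gamma)\otimes\mathbb{R}$ is a centrally symmetric interval of ``length'' essentially $2/\lambda_d(\Lambda,K)$. This shows the number of cosets of $\Gamma$ that meet $K$ is at most $\lfloor 2/\lambda_d(\Lambda,K) + 1\rfloor$, giving the factor corresponding to the last successive minimum.

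For each non-empty coset $\Gamma + u$, the intersection $(\Gamma+u)\cap K$ lies in an affine section $K \cap (L+u')$ parallel to~$L$, which is convex but generally not centrally symmetric. A short convexity argument using $K = -K$ shows that the difference set of such a section is contained in $2K'$, so the lattice-point count of each coset is dominated by $|\Gamma \cap 2K'|$, which the induction hypothesis bounds. Multiplying the coset count by the per-coset bound and telescoping through the induction yields the required estimate $2^{d-1}\prod_{i=1}^d \lfloor 2/\lambda_i + 1\rfloor$. The main technical obstacle is handling these non-central sections without losing the sharp factor $2^{d-1}$: one must exploit the antipodal pairing of non-central cosets $\Gamma + u$ and $\Gamma - u$ together with the observation that the central section $K'$ contributes the dominant share of lattice points; any naive comparison of sections with the central one via Brunn--Minkowski loses constants, so a more delicate bookkeeping, keyed to the flag $\lin(v_1) \subset \lin(v_1,v_2) \subset \cdots$, is needed to preserve the factor $2^{d-1}$ rather than a weaker $2^d$ or $c_d$ bound.
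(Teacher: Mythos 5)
This statement is not proved in the paper at all: it is quoted verbatim from Henk \cite[Theorem~1.5]{henk02}, so there is no in-paper proof to compare against. Your outline does follow the general shape of the known argument (induction on $d$, slicing along $L=\lin(v_1,\dots,v_{d-1})$, and the correct observation that $\lambda_i(\Gamma,K')\leq\lambda_i(\Lambda,K)$ for $i<d$ because $v_i\in\lambda_i K\cap L=\lambda_i K'$), and the base case $d=1$ is fine. But the two steps that carry all the content of the theorem are, respectively, argued backwards and left as an acknowledged obstacle.

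First, the coset count. From $v_d\in\lambda_d K$ you get $\pi(v_d)\in\lambda_d\,\pi(K)$, i.e.\ $mg\in\lambda_d\,\pi(K)$ for the generator $g$ of $\Lambda/\Gamma$; this shows the interval $\pi(K)$ has length \emph{at least} $2m/\lambda_d\geq 2/\lambda_d$ in units of $g$, which is the opposite of what you need. There is no upper bound on the length of $\pi(K)$ in terms of $\lambda_d$: the body may protrude far beyond the hyperplanes $L\pm u$ without containing lattice points there. The quantity that is actually controlled is arithmetic, not geometric: every $z\in\Lambda\setminus L$ satisfies $z\notin\lambda K$ for $\lambda<\lambda_d$ (since for $\lambda_{d-1}\leq\lambda<\lambda_d$ the set $\Lambda\cap\lambda K$ spans exactly $L$). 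Converting that fact into a bound on the number of cosets of $\Gamma$ that contain a point of $\Lambda\cap K$ is genuinely delicate, because the set of occupied levels need not be an interval of integers, so one cannot simply measure the projection; this step needs a real argument, not the one you gave.

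Second, the per-coset count. Bounding a non-central slice $(\Gamma+u)\cap K$ by $|\Gamma\cap 2K'|$ via differences and then invoking the induction hypothesis for the pair $(\Gamma,2K')$ replaces each $\lambda_i$ by $\lambda_i/2$ and hence each factor $\lfloor 2/\lambda_i+1\rfloor$ by $\lfloor 4/\lambda_i+1\rfloor$; multiplied across the $d-1$ inductive steps this loses roughly a factor $2^{d-1}$ beyond the stated bound. You correctly identify this as ``the main technical obstacle'' and say that a ``more delicate bookkeeping'' exploiting antipodal pairing of the cosets $\Gamma\pm u$ is needed, but you do not supply it, and it is precisely this bookkeeping (showing that the doubling loss across slices and the loss in the coset count together amount to a single factor $2$ per inductive step) that constitutes Henk's theorem; the naive version of your argument only recovers a weaker bound of the form $c_d\prod_i\lceil 1/\lambda_i\rceil$ with a much larger $c_d$. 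As it stands the proposal is a plausible plan with its two essential lemmas missing, one of them justified by a reversed inequality.
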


For $\Lambda \in \mathcal{L}^d$ and $K \in \mathcal{K}^d$, let $v_1,\dots,v_d$ be linearly independent vectors such that $v_i \in \Lambda \cap (\lambda_i(\Lambda,K) \cdot K)$ for every $i \in [d]$.
For $d>2$, the vectors $v_1,\dots,v_d$ do not necessarily form a basis of $\Lambda$~{\cite[see Section~X.5]{sieChan89}}.
However, the following theorem shows that there exists a basis with vectors of lengths not much larger than the lengths of $v_1,\dots,v_d$.

\begin{theorem}[First finiteness theorem~{\cite[see Lemma~2 in Section X.6]{sieChan89}}]
\label{thm:firstFniniteness}
Let $d$ be a positive integer.
For every $\Lambda \in \mathcal{L}^d$ and every $K \in \mathcal{K}^d$, there is a basis $\{b_1,\dots,b_d\}$ of $\Lambda$ with $b_i \in (3/2)^{i-1}\lambda_i(\Lambda,K) \cdot K$ for every $i \in [d]$.
\end{theorem}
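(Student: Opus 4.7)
The plan is to argue by induction on $i$, constructing a basis $b_1,\dots,b_d$ of $\Lambda$ so that at each stage $\{b_1,\dots,b_i\}$ is a basis of the sublattice $\Lambda_i \colonequals \Lambda \cap \lin(v_1,\dots,v_i)$, where $v_1,\dots,v_d\in\Lambda$ are linearly independent vectors realizing the successive minima, i.e., $v_j \in \lambda_j(\Lambda,K)\cdot K$. The base case is $b_1 \colonequals v_1 \in \lambda_1 K$. For the inductive step, observe that $\Lambda_i/\Lambda_{i-1}$ is a free abelian group of rank one; pick $w_i\in\Lambda_i$ so that $\Lambda_i = \Lambda_{i-1} + \mathbb{Z}\cdot w_i$, and, after possibly flipping the sign of $w_i$, write
\[
v_i = c\, w_i + \sum_{j<i} a_j\, b_j \qquad \text{with } c\in\mathbb{Z}_{\ge 1},\ a_j\in\mathbb{Z}.
\]

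The main idea is a rounding step: for each $j<i$, let $s_j\in\mathbb{Z}$ be an integer closest to $a_j/c$, so that $|s_j-a_j/c|\le 1/2$, and define $b_i \colonequals w_i + \sum_{j<i} s_j\, b_j$. Because $b_i$ differs from $w_i$ only by an integer combination of $b_1,\dots,b_{i-1}$, the set $\{b_1,\dots,b_i\}$ remains a basis of $\Lambda_i$. Rewriting gives
\[
b_i = \frac{1}{c}\, v_i + \sum_{j<i} \Bigl(s_j - \frac{a_j}{c}\Bigr)\, b_j,
\]
so, using the $0$-symmetry of $K$ and $c\ge 1$, we have $(1/c)v_i \in \lambda_i K$, and by the inductive hypothesis each summand satisfies $(s_j-a_j/c)\,b_j \in \tfrac{1}{2}(3/2)^{j-1}\lambda_j\,K$.

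Combining these inclusions with $\lambda_j\le\lambda_i$ for $j\le i$, the convexity identity $\alpha K + \beta K = (\alpha+\beta)K$ for $\alpha,\beta\ge 0$, and the geometric-series evaluation $1 + \sum_{j=1}^{i-1}\tfrac{1}{2}(3/2)^{j-1} = (3/2)^{i-1}$, we conclude
\[
b_i \in \lambda_i\Bigl(1 + \sum_{j=1}^{i-1} \tfrac{1}{2}(3/2)^{j-1}\Bigr) K = (3/2)^{i-1}\lambda_i\, K,
\]
which closes the induction. The delicate point is that one cannot simply set $b_i=v_i$: the integer $c$ in the expansion of $v_i$ may be larger than $1$, so $v_i$ need not extend $b_1,\dots,b_{i-1}$ to a basis of $\Lambda_i$. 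The remedy is to replace $v_i$ by the even shorter vector $v_i/c$ and then round it back into the lattice along the previously chosen basis vectors; the length penalty $\tfrac{1}{2}(3/2)^{j-1}\lambda_j$ incurred at each level accumulates into precisely the geometric factor $(3/2)^{i-1}$ that appears in the statement.
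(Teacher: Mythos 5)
Your argument is correct. Note that the paper does not prove this statement at all -- it is quoted as a known result (Lemma~2 in Section~X.6 of Cassels' book, as cited), so there is no internal proof to compare against; what you have written is essentially the standard textbook derivation, and it checks out. The inductive skeleton is sound: $\Lambda_i/\Lambda_{i-1}$ is torsion-free of rank one precisely because $\Lambda_{i-1}=\Lambda\cap\lin(v_1,\dots,v_{i-1})$ is saturated in $\Lambda_i$, so a generator $w_i$ of the quotient extends $b_1,\dots,b_{i-1}$ to a basis of $\Lambda_i$, and replacing $w_i$ by $b_i=w_i+\sum_{j<i}s_jb_j$ is a unimodular change of basis. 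The containments $(1/c)v_i\in\lambda_iK$ (using $c\ge1$ and $0\in K$) and $(s_j-a_j/c)b_j\in\tfrac12(3/2)^{j-1}\lambda_jK$ (using symmetry of $K$) are right, and the telescoping $1+\sum_{j=1}^{i-1}\tfrac12(3/2)^{j-1}=(3/2)^{i-1}$ is exactly what produces the stated constant. The only point you gloss over is the base case: you need $\{v_1\}$ to generate $\Lambda\cap\lin(v_1)$, i.e., $v_1$ to be primitive; this follows from minimality (if $v_1=cu$ with $u\in\Lambda$ and $c\ge2$, then $u\in(\lambda_1/c)K$ contradicts the definition of $\lambda_1$), or you can simply run your inductive step verbatim for $i=1$ with $\Lambda_0=\{0\}$. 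One remark for context: the classical proof rounds the coefficients of $b_i$ against the minimizers $v_j$ themselves rather than against the previously built $b_j$, which yields the sharper bound $b_i\in\max\{1,i/2\}\,\lambda_iK$; your variant accumulates the error geometrically and gives exactly the weaker $(3/2)^{i-1}$ factor that the paper states, which is all that is needed here.
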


Now, let $\Lambda$ be a $d$-dimensional lattice with $\lambda_d(\Lambda,B^d)\leq 1$.
Throughout this section, we use $\lambda_i$ to denote the $i$th successive minimum $\lambda_i(\Lambda,B^d)$ for $i=1,\dots,d$.
Let $k$ be an integer with $1 \leq k \leq d-1$.
We show the following result.

\begin{theorem}
\label{thm:upper}
There is a constant $C=C(d,k)$ such that the set $\Lambda \cap B^d$ can be covered with $C \cdot \alpha^{d-k}$ $k$-dimensional linear subspaces of $\mathbb{R}^d$, where 
\[\alpha \colonequals \min_{d-k+1\leq i \leq d}(\lambda_{d-i+1}\cdots\lambda_d)^{-1/(i-1)}.\]
\end{theorem}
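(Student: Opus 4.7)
I would prove Theorem~\ref{thm:upper} by induction on the ambient dimension $d$, with the base case $d = k+1$ provided directly by the upper bound in Theorem~\ref{thm:Barany} of Bárány, Harcos, Pach, and Tardos: it covers $\Lambda \cap B^d$ with $O(\alpha)$ hyperplanes through the origin, and since $d-k=1$ in this case, this matches the desired $O(\alpha^{d-k})$ bound.

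For the inductive step, I would first invoke the first finiteness theorem (Theorem~\ref{thm:firstFniniteness}) to produce a basis $b_1,\ldots,b_d$ of $\Lambda$ with $\|b_i\|\le(3/2)^{i-1}\lambda_i$. Since $\alpha$ is the minimum of $k$ candidate expressions indexed by $i\in\{d-k+1,\ldots,d\}$, the strategy is to construct, for each such $i$, a separate cover of $\Lambda\cap B^d$ by at most $O((\lambda_{d-i+1}\cdots\lambda_d)^{-(d-k)/(i-1)})$ many $k$-dimensional linear subspaces, and then retain the smallest of the $k$ candidate covers. For $i\le d-1$, the natural construction is to project orthogonally via $\pi_i\colon\mathbb{R}^d\to V_i^\perp$ where $V_i\colonequals\lin(b_1,\ldots,b_{d-i})$; since $V_i$ is the real span of $d-i$ basis vectors of $\Lambda$, the image $\pi_i(\Lambda)$ is a lattice of full rank $i$ in $V_i^\perp\cong\mathbb{R}^i$, and the projected vectors $\pi_i(b_{d-i+1}),\ldots,\pi_i(b_d)$ are linearly independent with $\|\pi_i(b_\ell)\|\le\|b_\ell\|\le(3/2)^{d-1}\lambda_\ell$, yielding the bound $\mu_\ell\le(3/2)^{d-1}\lambda_{d-i+\ell}$ on the successive minima of $\pi_i(\Lambda)$ in $B^i$. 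The crucial geometric observation is that the $\pi_i$-preimage of a $(k-d+i)$-dimensional linear subspace of $V_i^\perp$ is a $k$-dimensional linear subspace of $\mathbb{R}^d$; hence applying the inductive hypothesis in dimension $i<d$ to cover $\pi_i(\Lambda)\cap B^i$ by $(k-d+i)$-dimensional linear subspaces of $V_i^\perp$ and lifting via $\pi_i^{-1}$ produces the desired candidate cover.

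The candidate $i=d$ admits no dimensional reduction via projection and must be treated separately. Here I would first apply Theorem~\ref{thm:Barany} to cover $\Lambda\cap B^d$ with $O((\lambda_1\cdots\lambda_d)^{-1/(d-1)})$ many hyperplanes through the origin, and then recurse inside each such hyperplane via the inductive hypothesis in dimension $d-1$. The main obstacle lies here: a naive aggregation of the inner covers overshoots the target $O((\lambda_1\cdots\lambda_d)^{-(d-k)/(d-1)})$ by a small polynomial factor, so a tight accounting is essential. The plan is to combine a Minkowski-type averaging (Theorem~\ref{thm:2ndMinkowski}) over the family of hyperplanes with a careful choice of the hyperplane cover---ideally using hyperplanes aligned with the short basis $b_1,\ldots,b_d$---so that the products of successive minima of the sublattices $\Lambda\cap H$ aggregate to the correct exponent $(d-k)/(d-1)$. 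A secondary concern is that the bounds $\mu_\ell\le O(\lambda_{d-i+\ell})$ on projected successive minima are only one-sided; verifying that the inductive hypothesis still delivers the claimed exponent after these upper bounds are plugged in requires some care but should follow from the monotonicity and the concavity built into the definition of $\alpha$.
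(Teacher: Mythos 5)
Your branches $i\le d-1$ are essentially workable and correspond to what the paper does in Lemma~\ref{lem:projection} and Corollary~\ref{cor:MaleQ} (the paper projects along short lattice vectors onto the sublattice spanned by the remaining basis vectors rather than orthogonally, but the effect is the same; your worry about the one-sidedness of $\mu_\ell\le O(\lambda_{d-i+\ell})$ is resolved by Minkowski's second theorem, since $\mu_1\cdots\mu_i=\Theta_d(\det\pi_i(\Lambda))=\Theta_d(\det\Lambda/\det(\Lambda\cap V_i))\ge\Omega_d(\lambda_{d-i+1}\cdots\lambda_d)$, so the $\ell=i$ term alone bounds the new $\alpha'$ by $O((\lambda_{d-i+1}\cdots\lambda_d)^{-1/(i-1)})$). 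The genuine gap is the branch $i=d$, which you leave as a plan. This branch cannot be dodged: your projections preserve the codimension $d-k$, so the recursion bottoms out in instances whose only remaining candidate is the top index; worse, for the lattice cube $\mathbb{Z}^d\cap B^d(n)$ --- the case behind Corollary~\ref{cor:lowerCover} --- all $\lambda_j$ are equal and the minimum defining $\alpha$ is attained at $i=d$, so your scheme lands in the unproven case immediately. The hyperplane-and-recurse step \emph{is} the theorem.

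Moreover, the fix you sketch points in the wrong direction. The paper's hyperplanes are not aligned with a short basis of $\Lambda$: they are the orthogonal complements $H(z)$ of short vectors $z$ of the \emph{dual} lattice $\Lambda^*$, produced by a pigeonhole argument, and the decisive quantitative fact (Lemma~\ref{lem:indukceNadrovina} and Corollary~\ref{cor:indukceNadrovina}) is that the cost of covering $\Lambda\cap H(z)\cap B^d$ by $k$-dimensional subspaces decays like $\|z\|^{-(d-k-1)/(q-2)}$. The overshoot of the naive aggregation is not a ``small polynomial factor'' removable by averaging: the per-hyperplane bound for $\|z\|=\Theta(1)$ is already $\alpha^{(d-k-1)(q-1)/(q-2)}$, and multiplying by the $\Theta(\alpha)$ hyperplanes gives exponent $d-k+(d-k-1)/(q-2)>d-k$. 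The correct total $O_{d,k}(\alpha^{d-k})$ emerges only after partitioning the dual vectors $z$ into spherical shells, counting them shell by shell via Theorem~\ref{thm:MinkowskiEnumerator} applied to $\Lambda^*$, and summing; that summation in turn needs the inequalities of Lemma~\ref{lem:lambdaVsAlpha}, which hold precisely because the minimum defining $\alpha$ is attained at some $q\ge d-k+2$ (the case $q=d-k+1$ being exactly the easy Corollary~\ref{cor:MaleQ}). None of this machinery is present in, or suggested by, your proposal, so the central step remains unproved.
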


This is the same expression as in the statement of Theorem~\ref{thm:upperGeneral}.
We have just chosen a different index notation, since we will work mostly in a dual setting in the proof, where this new expression becomes more natural. 
Let $q$ be an integer from $\{d-k+1,\dots,d\}$ such that $\alpha = (\lambda_{d-q+1}\cdots\lambda_d)^{-1/(q-1)}$, where $\alpha$ is the parameter from the statement of Theorem~\ref{thm:upper}.

In the rest of the section, we prove Theorem~\ref{thm:upper}.
However, since its proof is rather long and complicated, we first give a high-level overview.

We start by proving a weaker upper bound $O_{d,k}((\lambda_k \cdots \lambda_d)^{-1})$ on the number of $k$-dimensional subspaces of $\mathbb{R}^d$ needed to cover $\Lambda \cap B^d$ (Corollary~\ref{cor:MaleQ}).
This bound is obtained from Theorem~\ref{thm:MinkowskiEnumerator} and Lemma~\ref{lem:projection}, which states that, for each $s$ with $0 \leq s \leq d-1$, there is a suitable projection of $\mathbb{R}^d$ on a $(d-s)$-dimensional linear subspace such that the $i$th successive minimum of the image of $\Lambda \cap B^d$ is in $\Theta(\lambda_{i+s})$.
The existence of such projections is proved using Minkowski's second theorem and the First finiteness theorem.
Theorem~\ref{thm:Barany} and the bound from Corollary~\ref{cor:MaleQ} then allows us to to assume $d \geq 4$ and $q  \geq d-k+2$. 
The latter assumption can be used to obtain two estimates on products of successive minima of $\Lambda$ and $B^d$ (Lemma~\ref{lem:lambdaVsAlpha}).

The proof of Theorem~\ref{thm:upper} is then carried out by induction on $d-k$, starting with the case $d-k=1$, in which we cover $\Lambda \cap B^d$ by hyperplanes.
This initial step is treated essentially in the same way as in~\cite{bhpt01} and it is derived using the pigeonhole principle and results of Mahler~\cite{mahl39} and Banaszczyk~\cite{bana93}.
In the resulting covering $\mathcal{S}$ of $\Lambda \cap B^d$ by hyperplanes, the intersection of $\Lambda$ with a hyperplane from $\mathcal{S}$ induces a lattice of lower dimension.
We can thus apply the induction hypothesis on $(\Lambda  \cap H) \cap B^d$ for each hyperplane $H \in \mathcal{S}$. 
Using Minkowski's second theorem and Lemma~\ref{lem:lambdaVsAlpha}, we can show that the larger the norm of the normal vector of $H$ is, the sparser $(\Lambda  \cap H) \cap B^d$ is (Corollary~\ref{cor:indukceNadrovina}). 
Then we partition the hyperplanes from $\mathcal{S}$ according to the lengths of their normal vectors and we sum the sizes of the coverings of $(\Lambda  \cap H) \cap B^d$ by $k$-dimensional subspaces for each $H \in \mathcal{S}$.
Combining Corollary~\ref{cor:indukceNadrovina}, Theorem~\ref{thm:MinkowskiEnumerator}, and the bounds from Lemma~\ref{lem:lambdaVsAlpha}, we finally show that the total sum is bounded from above by $O_{d,k}(\alpha^{d-k})$. 

\vspace{2ex}

Now, as the first step towards the proof of Theorem~\ref{thm:upper}, we prove Corollary~\ref{cor:MaleQ}.
To do so, we prove the following lemma that is also used later in the proof of Theorem~\ref{thm:coveringAffine}.

\begin{lemma}
\label{lem:projection}
Let $d$ and $s$ be integers with $0 \leq s \leq d-1$.
There is a positive integer $r=r(d,s)$ and a projection $p$ of $\mathbb{R}^d$ along $s$ vectors of $\Lambda$ onto a $(d-s)$-dimensional linear subspace $N$ of~$\mathbb{R}^d$ such that $\Lambda \cap B^d$ is mapped to $\Lambda \cap N \cap B^d(r)$ and such that $\lambda_i(\Lambda \cap N, B^d(r) \cap N) = \Theta_{d,s}(\lambda_{i+s})$ for every $i \in [d-s]$.
\end{lemma}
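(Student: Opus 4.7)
The plan is to take $p$ to be an orthogonal projection tied to the first $s$ vectors of a suitable basis of $\Lambda$, and then to read off the successive minima of the image lattice via Minkowski's second theorem. The notation $\Lambda\cap N$ in the lemma refers to this image lattice, which is the natural realisation of the quotient $\Lambda/(\Lambda\cap M)$ inside $N$, where $M$ is as below.

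First I would invoke the First Finiteness Theorem (Theorem~\ref{thm:firstFniniteness}) with $K=B^d$ to get a basis $b_1,\dots,b_d$ of $\Lambda$ with $\|b_i\|\le (3/2)^{i-1}\lambda_i$ for every $i\in[d]$. Set $M:=\lin(b_1,\dots,b_s)$, let $N:=M^\perp$, and let $p\colon\mathbb{R}^d\to N$ be the orthogonal projection, whose kernel is $M$; in particular $p$ is a projection ``along the $s$ lattice vectors $b_1,\dots,b_s$''. Define $\Lambda_N:=p(\Lambda)$. Since $p(b_i)=0$ for $i\le s$ and $p(b_{s+1}),\dots,p(b_d)$ are linearly independent in $N$ (the vectors $b_{s+1},\dots,b_d$ are linearly independent modulo $M$ because $\{b_1,\dots,b_d\}$ is a lattice basis), $\Lambda_N$ is a $(d-s)$-dimensional lattice in $N$ with basis $p(b_{s+1}),\dots,p(b_d)$. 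Orthogonal projection is contractive, so $p(\Lambda\cap B^d)\subseteq \Lambda_N\cap(B^d\cap N)$, and one may take $r:=1$.

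Next I would control the successive minima $\lambda_i^N:=\lambda_i(\Lambda_N,B^d(r)\cap N)$. The upper bound $\lambda_i^N\le O_{d,s}(\lambda_{i+s})$ is immediate: $p(b_{s+1}),\dots,p(b_{s+i})$ are $i$ linearly independent elements of $\Lambda_N$ with $\|p(b_{s+j})\|\le \|b_{s+j}\|\le (3/2)^{s+j-1}\lambda_{s+j}$. For the matching lower bound I compute determinants. Minkowski's second theorem (Theorem~\ref{thm:2ndMinkowski}) applied to $\Lambda$ in $\mathbb{R}^d$ gives $\det\Lambda=\Theta_d(\lambda_1\cdots\lambda_d)$. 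The first $s$ successive minima $\mu_i$ of $\Lambda\cap M$ inside $M$ (with the ball $B^d\cap M$) satisfy $\mu_i=\Theta_d(\lambda_i)$: the upper bound $\mu_i\le \|b_i\|\le O_d(\lambda_i)$ is witnessed by $b_1,\dots,b_i\in\Lambda\cap M$, and the inequality $\lambda_i\le \mu_i$ holds since any $i$ linearly independent vectors of $\Lambda\cap M$ are also linearly independent in $\Lambda$. Minkowski's second theorem applied to $\Lambda\cap M$ in $M$ thus yields $\det(\Lambda\cap M)=\Theta_d(\lambda_1\cdots\lambda_s)$. Because $\Lambda\cap M=\mathbb{Z}b_1+\dots+\mathbb{Z}b_s$ is a primitive sublattice of $\Lambda$, the standard identity $\det\Lambda=\det(\Lambda\cap M)\cdot\det\Lambda_N$ gives $\det\Lambda_N=\Theta_d(\lambda_{s+1}\cdots\lambda_d)$. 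Applying Minkowski's second theorem once more to $\Lambda_N$ in $N$ produces
\[
\lambda_1^N\cdots \lambda_{d-s}^N = \Theta_d(\det\Lambda_N) = \Theta_d(\lambda_{s+1}\cdots \lambda_d),
\]
which, combined with the individual upper bounds $\lambda_j^N\le O_{d,s}(\lambda_{s+j})$, forces $\lambda_i^N\ge \Omega_{d,s}(\lambda_{s+i})$ for every $i\in[d-s]$.

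The main obstacle is the last step: chaining Minkowski's second theorem across three lattices (the original $\Lambda$, the primitive sublattice $\Lambda\cap M$, and the projected lattice $\Lambda_N$) requires $M$ to be simultaneously primitive in $\Lambda$ and geometrically adapted to the first $s$ successive minima of $\Lambda$, which is precisely what the First Finiteness Theorem delivers. Without this coupling the identity $\det\Lambda_N=\Theta_d(\lambda_{s+1}\cdots\lambda_d)$ would fail, and one could not pass from the product estimate for the successive minima of $\Lambda_N$ to the individual bounds $\lambda_i^N=\Theta_{d,s}(\lambda_{s+i})$ claimed by the lemma.
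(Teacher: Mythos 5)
Your argument is essentially correct, but it proves a variant of the lemma rather than the literal statement, and it does so by a genuinely different route. The paper does not use an orthogonal projection: it iterates $s$ one-dimensional projections, each time taking a basis $b_1,\dots,b_{d-j}$ of the current lattice from the First finiteness theorem (Theorem~\ref{thm:firstFniniteness}) and projecting \emph{along $b_1$ onto} $\lin(b_2,\dots,b_{d-j})$. Because the target subspace is spanned by lattice vectors, the image really is the sublattice $\Lambda\cap N$, exactly as stated; the price is that this oblique projection is not contractive, so the paper must inflate the radius to $r_{j+1}=(2^{d^2}+1)r_j$ and control the coefficient $t_1$ via two applications of Minkowski's second theorem. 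In exchange, the successive minima come out by a direct sandwich, $\lambda_{i+1}(\Lambda_j)\le\lambda_i(\Lambda_{j+1})\le\|b_{i+1}\|\le(3/2)^i\lambda_{i+1}(\Lambda_j)$, with no determinant computation. Your orthogonal projection buys $r=1$ and a clean one-shot construction, but its image is $p(\Lambda)\supseteq\Lambda\cap N$ (generically a strict, even full-rank-versus-trivial, difference, since $M^\perp$ need not contain nonzero lattice points), so you are forced to reinterpret ``$\Lambda\cap N$'' as $p(\Lambda)$ — which you state openly — and to recover the lower bounds on the minima through the chain $\det\Lambda=\det(\Lambda\cap M)\cdot\det p(\Lambda)$ plus three applications of Theorem~\ref{thm:2ndMinkowski}; that chain is valid (your $\Lambda\cap M=\mathbb{Z}b_1+\dots+\mathbb{Z}b_s$ is saturated, and dividing the product estimate by the individual upper bounds does yield $\lambda_i^N\ge\Omega_{d,s}(\lambda_{s+i})$). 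Since the downstream uses (Corollary~\ref{cor:MaleQ} and Theorem~\ref{thm:coveringAffine}) only need that the image is a $(d-s)$-dimensional lattice inside $B^d(r)$ with the stated minima and that $z\in p(z)+\lin(b_1,\dots,b_s)$, your variant would serve equally well there, but as a proof of Lemma~\ref{lem:projection} as written you should either replace ``$\Lambda\cap N$'' by ``$p(\Lambda)$'' in the statement or switch to a projection onto $\lin(b_{s+1},\dots,b_d)$ and then handle the non-contractivity as the paper does.
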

\begin{proof}
If $s=0$, then we set $p$ to be the identity on $\mathbb{R}^d$ and $r \colonequals 1$.
Thus we assume $s \geq 1$.

For $j=0,\ldots,d-1$, we set $r_j \colonequals (2^{d^2}+1)^j$.
For $j=0,\dots,d-1$ and a lattice $\Lambda_j \in \mathcal{L}^{d-j}$, we show that there is a projection $p_j$ of $\mathbb{R}^{d-j}$ along a vector $v_j \in \Lambda_j$ onto a $(d-j-1)$-dimensional linear subspace $N_{j+1}$ of $\mathbb{R}^{d-j}$ such that $\Lambda_j \cap B^{d-j}(r_j)$ is mapped to $\Lambda_j \cap N_{j+1} \cap B^{d-j}(r_{j+1})$ by $p_j$ and such that
\[\lambda_i(\Lambda_j \cap N_{j+1}, B^{d-j}(r_{j+1}) \cap N_{j+1}) \in \Theta_d(\lambda_{i+1}(\Lambda_j, B^{d-j}(r_j)))\]
for every $i \in [d-j-1]$.
We let $\Lambda_0 = \Lambda$ and, for every $j=0, \dots, s-1$, we use the above-defined projection $p_j$ for $\Lambda_j$ and define $\Lambda_{j+1} \colonequals p_j(\Lambda_j) = \Lambda_j \cap N_{j+1}$.
The statement of the lemma is then obtained by setting $p \colonequals p_{s-1} \circ \cdots \circ p_0$.

Let $B=\{b_1,\dots,b_{d-j}\}$ be a basis of $\Lambda_j$ such that $b_i \in (3/2)^{i-1}\lambda_i(\Lambda_j,B^{d-j}(r_j)) \cdot B^{d-j}(r_j)$ for every $i \in [d-j]$.
Such basis exists by the First finiteness theorem (Theorem~\ref{thm:firstFniniteness}).
In particular, 
\begin{equation}
\label{eq:delkab1}
\|b_1\| = \lambda_1(\Lambda_j,B^{d-j}(r_j)) \cdot r_j.
\end{equation}
Let $v_j \colonequals b_1$ and let $N_{j+1}$ be the linear subspace generated by $b_2,\dots,b_{d-j}$.
Let $\Lambda_{j+1}$ be the set $\Lambda_j \cap N_{j+1}$.
Note that $\Lambda_{j+1}$ is a $(d-j-1)$-dimensional lattice with the basis $\{b_2,\dots,b_{d-j}\}$.

We consider the projection $p_j$ onto $N_{j+1}$ along $v_j$.
That is, every $x \in \mathbb{R}^{d-j}$ is mapped to $p_j(x)=\sum_{i=2}^{d-j} t_ib_i$, where $x=\sum_{i=1}^{d-j} t_ib_i$, $t_i \in \mathbb{R}$, is the expression of $x$ with respect to the basis $B$.

We show that $p_j(z) \in \Lambda_{j+1} \cap B^{d-j}(r_{j+1})$ for every $z \in \Lambda_j \cap B^{d-j}(r_j)$.
We have $p_j(z) \in \Lambda_{j+1}$, since $B$ is a basis of $\Lambda_j$ and $B \setminus\{b_1\}$ is a basis of $\Lambda_{j+1}$.
Let $z=\sum_{i=1}^{d-j} t_ib_i$, $t_i \in \mathbb{Z}$, be the expression of $z$ with respect to $B$ and let $v$ be the Euclidean distance between $b_1$ and $N_{j+1}$.

From the definitions of $\Lambda_{j+1}$ and $B$, we have \begin{equation}
\label{eq:odhadLambIndukce}
\lambda_{i+1}(\Lambda_j,B^{d-j}(r_j)) \leq \lambda_i(\Lambda_{j+1},B^{d-j}(r_j)\cap N_{j+1}) \leq \|b_{i+1}\| \leq (3/2)^i\lambda_{i+1}(\Lambda_j,B^{d-j}(r_j))
\end{equation}
for every $i \in [d-j-1]$.
Using Minkowski's second theorem (Theorem~\ref{thm:2ndMinkowski}) twice, the upper bound in~\eqref{eq:odhadLambIndukce}, and the length of $b_1$~\eqref{eq:delkab1}, we obtain 
\begin{align*}
\frac{\vol(B^{d-j}(r_j))}{2^{d-j} \det(\Lambda_j)} &\leq \frac{1}{\lambda_1(\Lambda_j,B^{d-j}(r_j))\cdots \lambda_{d-j}(\Lambda_j,B^{d-j}(r_j))} \qquad \text{by Theorem~\ref{thm:2ndMinkowski} for $\Lambda_j$} \\
&\leq \frac{r_j}{\|b_1\|}\cdot\frac{(3/2)^{(d-j)(d-j-1)/2}}{\lambda_1(\Lambda_{j+1},B^{d-j}(r_j) \cap N_{j+1}) \cdots \lambda_{d-j-1}(\Lambda_{j+1},B^{d-j}(r_j) \cap N_{j+1})} \;\;\; \text{by~\eqref{eq:delkab1}~and~\eqref{eq:odhadLambIndukce}}\\
&\leq \frac{r_j}{\|b_1\|}\cdot\frac{(3/2)^{(d-j)(d-j-1)/2} \cdot (d-j-1)! \cdot \vol(B^{d-j}(r_j) \cap N_{j+1})}{2^{d-j-1} \cdot \det(\Lambda_{j+1})} \; \text{by Theorem~\ref{thm:2ndMinkowski} for $\Lambda_{j+1}$}.
\end{align*}
Since $\det(\Lambda_j) = v \cdot \det(\Lambda_{j+1})$, we can rewrite this expression as 
\[\|b_1\| \leq \frac{r_j\cdot (3/2)^{(d-j)(d-j-1)/2} \cdot (d-j-1)! \cdot 2^{d-j} \cdot \vol(B^{d-j}(r_j) \cap N_{j+1})  \cdot \det(\Lambda_j)}{2^{d-j-1} \cdot \vol(B^{d-j}(r_j)) \cdot \det(\Lambda_{j+1})} \leq 2^{d^2}\cdot v.\]
To derive the last inequality, we use the well-known formula
\[\vol(B^m(r)) = 
\begin{cases} 
\frac{2((m-1)/2)!(4\pi)^{(m-1)/2}}{m!}\cdot r^m &\mbox{if } m \mbox{ is odd,} \\ 
\frac{\pi^{m/2}}{(m/2)!}\cdot r^m & \mbox{if } m \mbox{ is even} 
\end{cases}\]
for the volume of $B^m(r)$, $m,r \in \mathbb{N}$.
Since $\vol(B^{d-j}(r_j) \cap N_{j+1})=\vol(B^{d-j-1}(r_j))$, we have $\vol(B^{d-j}(r_j) \cap N_{j+1})/\vol(B^{d-j}(r_j)) \leq 2^{d-j}/r_j$.

The Euclidean distance between $z$ and $N_{j+1}$ equals $|t_1| \cdot v$, which is at most $r_j$, as $z \in B^{d-j}(r_j)$.
Thus, since $|t_1| \leq r_j/v$ and $1/v \leq 2^{d^2}/ \|b_1\|$, we obtain $|t_1| \leq 2^{d^2} \cdot r_j/\|b_1\|$.
This implies
\[\|p_j(z)\| = \left\|z - t_1b_1 \right\| \leq \|z\| + |t_1| \cdot \|b_1\|  \leq r_j + 2^{d^2}r_j= r_{j+1}\]
and we see that $p_j(z)$ lies in $\Lambda_{j+1} \cap B^{d-j}(r_{j+1})$.

Note that $\lambda_i(\Lambda_{j+1},B^{d-j}(r_{j+1}) \cap N_{j+1}) = (2^{d^2}+1)^{-1} \cdot \lambda_i(\Lambda_{j+1},B^{d-j}(r_j) \cap N_{j+1})$ for every $i\in[d-j-1]$.
Using this fact together with the bounds in~\eqref{eq:odhadLambIndukce}, we obtain
\[\frac{\lambda_{i+1}(\Lambda_j, B^{d-j}(r_j))}{(2^{d^2}+1)} \leq \lambda_i(\Lambda_{j+1}, B^{d-j}(r_{j+1}) \cap N_{j+1}) \leq \frac{(3/2)^{d-j}\lambda_{i+1}(\Lambda_j, B^{d-j}(r_j))}{(2^{d^2}+1)}\]
for every $i \in [d-j-1]$.
That is,
\begin{align*}
\lambda_i(\Lambda_{j+1}, B^{d-j-1}(r_{j+1})) = 
\lambda_i(\Lambda_{j+1}, B^{d-j}(r_{j+1}) \cap N_{j+1}) = 
\Theta_{d}(\lambda_{i+1}(\Lambda_j, B^{d-j}(r_j))).
\end{align*}

Consequently, for $N \colonequals N_s$ and $r \colonequals r_s$, we have $\Lambda_s = \Lambda \cap N$ and
\[
\lambda_i(\Lambda \cap N, B^{d}(r) \cap N) = 
\lambda_i(\Lambda_s, B^{d-s}(r_s)) = 
\Theta_{d,s}(\lambda_{i+s}(\Lambda_0, B^{d}(r_0))) =
\Theta_{d,s}(\lambda_{i+s})
\]
for every $i \in [d-s]$.
\qed
\end{proof}

\begin{corollary}
\label{cor:MaleQ}
The set $\Lambda \cap B^d$ can be covered with $O_{d,k}((\lambda_k\cdots\lambda_d)^{-1})$ $k$-dimensional linear subspaces of~$\mathbb{R}^d$.
\end{corollary}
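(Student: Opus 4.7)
The plan is to apply Lemma~\ref{lem:projection} with $s \colonequals k-1$, which yields vectors $v_1,\dots,v_{k-1}$ in $\Lambda$, a $(d-k+1)$-dimensional linear subspace $N$ of $\mathbb{R}^d$, and a positive integer $r=r(d,k)$ such that the projection $p$ of $\mathbb{R}^d$ along $v_1,\dots,v_{k-1}$ onto $N$ maps $\Lambda \cap B^d$ into $\Lambda \cap N \cap B^d(r)$, and such that
\[\lambda_i(\Lambda \cap N, B^d(r) \cap N) = \Theta_{d,k}(\lambda_{i+k-1})\]
for every $i \in [d-k+1]$.

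Next, I would use the projection to construct the covering. Set $V \colonequals \lin(v_1,\dots,v_{k-1})$. For each $y \in p(\Lambda \cap B^d)$, define $S_y$ to be a $k$-dimensional linear subspace of $\mathbb{R}^d$ that contains $V \cup \{y\}$; such a subspace exists since $\dim(V)=k-1$, and if $y \in V$ we simply take any $k$-dimensional linear subspace containing $V$. Because the fiber $p^{-1}(y) \cap \Lambda = y + V$ is contained in $S_y$, the family $\{S_y \colon y \in p(\Lambda \cap B^d)\}$ covers $\Lambda \cap B^d$, and its cardinality is at most $|p(\Lambda \cap B^d)| \leq |\Lambda \cap N \cap B^d(r)|$.

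The final step is to bound $|\Lambda \cap N \cap B^d(r)|$ by applying Theorem~\ref{thm:MinkowskiEnumerator} to the $(d-k+1)$-dimensional lattice $\Lambda \cap N$ and the body $B^d(r) \cap N$. Combined with the estimate on the successive minima provided by Lemma~\ref{lem:projection} and the assumption $\lambda_d \leq 1$ (which guarantees $\lambda_{i+k-1} \leq 1$ for every $i \in [d-k+1]$, so that each factor $\lfloor 2/\lambda_i(\Lambda \cap N, B^d(r)\cap N)+1 \rfloor$ is absorbed into the big-$O$), this yields
\[|\Lambda \cap N \cap B^d(r)| \leq O_{d,k}\!\left(\prod_{i=1}^{d-k+1} \lambda_{i+k-1}^{-1}\right) = O_{d,k}\!\left((\lambda_k \cdots \lambda_d)^{-1}\right),\]
which is the claimed bound. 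I do not expect any real obstacle, as both main ingredients (Lemma~\ref{lem:projection} and Theorem~\ref{thm:MinkowskiEnumerator}) are already available. The only conceptual point worth emphasising is the choice $s=k-1$ rather than $s=k$: projecting along only $k-1$ lattice vectors ensures that every fiber $y+V$ sits inside a single $k$-dimensional \emph{linear} subspace through the origin, so the covering consists of linear rather than merely affine subspaces.
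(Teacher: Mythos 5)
Your proposal is correct and matches the paper's proof essentially verbatim: the paper also applies Lemma~\ref{lem:projection} with $s=k-1$, covers $\Lambda\cap B^d$ by the $k$-dimensional linear subspaces $\lin(\{y,b_1,\dots,b_{k-1}\})$ for $y$ ranging over $\Lambda\cap N\cap B^d(r)$, and bounds their number via Theorem~\ref{thm:MinkowskiEnumerator} together with $\lambda'_i=\Theta_{d,k}(\lambda_{i+k-1})$ and $\lambda_d\leq 1$. (Only a cosmetic slip: $p^{-1}(y)\cap\Lambda$ is contained in, not equal to, $y+V$; this does not affect the argument.)
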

\begin{proof}
By Lemma~\ref{lem:projection}, there is a positive integer $r=r(d,k-1)$ and a projection $p$ of $\mathbb{R}^d$ along $k-1$ vectors $b_1,\dots,b_{k-1} \in \Lambda$ onto a $(d-k+1)$-dimensional linear subspace $N$ of $\mathbb{R}^d$ such that $\Lambda \cap B^d$ is mapped to $\Lambda \cap N \cap B^d(r)$ and such that $\lambda'_i \colonequals \lambda_i(\Lambda \cap N, B^d(r) \cap N) = \Theta_{d,k}(\lambda_{i+k-1})$ for every $i \in [d-k+1]$.
We use $\Lambda_N$ to denote the $(d-k+1)$-dimensional sublattice $\Lambda \cap N$ of $\Lambda$.

We consider the set $\mathcal{S} \colonequals \{\lin(\{y,b_1,\dots,b_{k-1}\}) \colon y \in (\Lambda_N\setminus\{0\}) \cap B^d(r)\}$.
Then $\mathcal{S}$ consists of $k$-dimensional linear subspaces.
By Theorem~\ref{thm:MinkowskiEnumerator}, the size of $\mathcal{S}$ is at most 
\[
|\Lambda_N \cap B^d(r)| \leq 2^{d-k}\prod_{i=1}^{d-k+1}\left\lfloor \frac{2}{\lambda'_i} +1\right\rfloor 
\leq O_{d,k}\left(\prod_{i=1}^{d-k+1} \frac{1}{\lambda'_i}\right)
\leq O_{d,k}((\lambda_k\cdots\lambda_d)^{-1}),
\]
where the second inequality follows from the assumption $\lambda_d \leq 1$, as then $\lambda'_{d-k+1} \leq O_{d,k}(\lambda_d)$ implies $\lambda'_1 \leq \cdots \leq \lambda'_{d-k+1} \leq O_{d,k}(1)$.
The last inequality is obtained from $\lambda'_i \geq \Omega_{d,k}(\lambda_{i+k-1})$ for every $i \in [d-k+1]$.
Moreover, $\mathcal{S}$ covers $\Lambda \cap B^d$, since for every $z \in \Lambda \cap B^d$, 
$p(z)\in \Lambda_N \cap B^d(r)$, therefore $p(z) \in S$ for some $S \in \mathcal{S}$ and, since
$z \in \lin(p(z), b_1, \ldots, b_{k-1})$, we have $z \in S$.
\qed
\end{proof}

The case $k=1$ of Theorem~\ref{thm:upper} follows from Theorem~\ref{thm:MinkowskiEnumerator} (and also from
Corollary~\ref{cor:MaleQ}).
The case $k=d-1$ was shown by B\'{a}r\'any et al.~\cite{bhpt01}; see Theorem~\ref{thm:Barany}.
Therefore we may assume $d \ge 4$.
Corollary~\ref{cor:MaleQ} also provides the same bound as Theorem~\ref{thm:upper} if $q=d-k+1$, thus  we assume $q \geq d-k+2$ in the rest of the proof.

\begin{lemma}
\label{lem:lambdaVsAlpha}
If $q \geq d-k+2$, then the following two statements are satisfied.
\begin{enumerate}[label=(\roman*)]
\item\label{item-alpha1} We have $1/\lambda_i \leq \alpha$ for every $i \in \{d-q+1,\dots,d\}$,
\item\label{item-alpha3} $(\lambda_{d-i+2}\cdots\lambda_d)^{(q-i+1)/(i-2)} \leq \lambda_{d-q+1} \cdots \lambda_{d-i+1}$ for every $i \in \{3,\dots,d-k+2\}$.
\end{enumerate}
\end{lemma}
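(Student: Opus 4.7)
The plan is to exploit the single piece of information that the assumption $q\geq d-k+2$ provides: namely, that $q-1$ still lies in the index set $\{d-k+1,\ldots,d\}$ over which the minimum defining $\alpha$ is taken. This immediately gives
\[
\alpha = (\lambda_{d-q+1}\cdots\lambda_d)^{-1/(q-1)} \leq (\lambda_{d-q+2}\cdots\lambda_d)^{-1/(q-2)},
\]
equivalently $\lambda_{d-q+2}\cdots\lambda_d \leq \alpha^{-(q-2)}$. Dividing the defining identity $\lambda_{d-q+1}\cdots\lambda_d = \alpha^{-(q-1)}$ by this bound yields $\lambda_{d-q+1} \geq 1/\alpha$. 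Since the successive minima are non-decreasing, $\lambda_i \geq \lambda_{d-q+1} \geq 1/\alpha$ for every $i \in \{d-q+1,\ldots,d\}$, which proves~\ref{item-alpha1}.

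For~\ref{item-alpha3} I would pass to logarithmic variables, where the statement becomes a short discrete-concavity claim. Setting $x_j \colonequals -\log\lambda_j \geq 0$ and $S_m \colonequals x_{d-m+1} + \cdots + x_d$, the sequence $(x_j)$ is non-increasing (since the $\lambda_j$ are non-decreasing and all lie in $(0,1]$) and $(S_m)_{m \geq 0}$ is discretely convex with $S_0 = 0$. Taking $-\log$ of~\ref{item-alpha3} and cross-multiplying, the desired inequality is equivalent to
\[
\frac{S_m}{m-1} \geq \frac{S_q}{q-1} \qquad \text{for every } m = i-1 \in \{2,\ldots,d-k+1\}.
\]
Writing $c \colonequals S_q/(q-1)$, part~\ref{item-alpha1} translated into the log variables reads precisely $c \geq x_{d-q+1}$.

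Now I consider the auxiliary sequence $h(m) \colonequals (m-1)c - S_m$, so that $h(q) = 0$ and $h(m) - h(m-1) = c - x_{d-m+1}$ for every $m \geq 1$. For every $m \leq q$ the monotonicity of $(x_j)$ combined with $c \geq x_{d-q+1}$ gives $x_{d-m+1} \leq x_{d-q+1} \leq c$, so all these first differences are non-negative, i.e.~$h$ is non-decreasing on $\{0,1,\ldots,q\}$. Since $h(q) = 0$, this forces $h(m) \leq 0$ on the same range, which rearranges to $S_m/(m-1) \geq c$ for every $m \in \{2,\ldots,q\}$. Because $d-k+1 \leq q-1$, this covers the range required by~\ref{item-alpha3}.

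The only delicate point I anticipate is tracking the direction of inequalities through the log-transform; once both parts are reframed in terms of the non-increasing sequence $(x_j)$ and the convex sequence $(S_m)$, the whole lemma collapses to the single inequality $c \geq x_{d-q+1}$ from~\ref{item-alpha1} combined with a short telescoping argument on $h$.
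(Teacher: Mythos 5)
Your proof is correct. Part~\ref{item-alpha1} is the paper's own argument in contrapositive form: both compare the index $q$ with the index $q-1$ in the minimum defining $\alpha$ (which is where $q \geq d-k+2$ enters) and extract $\lambda_{d-q+1} \geq 1/\alpha$. For part~\ref{item-alpha3}, however, you take a genuinely different route. The paper runs a downward induction on $i$: it first establishes the case $i = d-k+2$ by a separate contradiction with the minimality of $\alpha$ at the index $d-k+1$, and then propagates the inequality from $i$ to $i-1$ using $\lambda_{d-i+2}^{q-i+1} \geq \lambda_{d-q+1}\cdots\lambda_{d-i+1}$. You instead observe that, after passing to $x_j = -\log\lambda_j$ and partial sums $S_m$, the whole of part~\ref{item-alpha3} is the family of inequalities $S_m/(m-1) \geq S_q/(q-1) = c$ for $2 \leq m \leq d-k+1$, and that these all follow by telescoping $h(m) = (m-1)c - S_m$ up to $h(q)=0$, using only the monotonicity of $(x_j)$ and the single bound $x_{d-q+1} \leq c$ already proved in part~\ref{item-alpha1}. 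Your reduction correctly reverses the inequality under $-\log$, the exponent manipulation $\frac{q-i+1}{i-2}+1 = \frac{q-1}{i-2}$ is right, and the required range $m \leq d-k+1 \leq q-1$ keeps all first differences $c - x_{d-m+1}$ nonnegative. What this buys is a unified, induction-free argument in which part~\ref{item-alpha3} is literally a corollary of part~\ref{item-alpha1} plus monotonicity, with no second appeal to the minimality of $\alpha$; the paper's version, by contrast, keeps everything in multiplicative form at the cost of a separate base case and a more opaque exponent bookkeeping in the inductive step.
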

\begin{proof}
For part~\ref{item-alpha1}, it suffices to show $1/\lambda_{d-q+1} \leq \alpha$, as $\lambda_{d-q+1} \leq \dots \leq \lambda_d$.
Suppose for contradiction that $1/\lambda_{d-q+1} > \alpha = (\lambda_{d-q+1}\cdots\lambda_d)^{-1/(q-1)}$.
Then we can rewrite this inequality as 
\begin{align*}
\lambda_{d-q+1}^{-1+1/(q-1)} &> (\lambda_{d-q+2}\cdots\lambda_d)^{-1/(q-1)} \\
\lambda_{d-q+1}^{-1/(q-1)} &> (\lambda_{d-q+2}\cdots\lambda_d)^{-1/((q-2)(q-1))} = (\lambda_{d-q+2}\cdots\lambda_d)^{-1/(q-2)+1/(q-1)}.
\end{align*}
The last expression can be further rewritten as 
\[(\lambda_{d-q+1}\cdots\lambda_d)^{-1/(q-1)} > (\lambda_{d-q+2}\cdots\lambda_d)^{-1/(q-2)},\]
and, since the left-hand side equals $\alpha$, this contradicts the choice of $\alpha$.
Here we use the assumption $q \geq d-k+2$, as then $q-1$ lies in the set $\{d-k+1,\dots,d\}$. 

For part~\ref{item-alpha3}, suppose first for contradiction that the inequality is not true for $i=d-k+2$.
That is, $\lambda_{d-q+1}\cdots\lambda_{k-1}<(\lambda_k\cdots \lambda_d)^{(q-d+k-1)/(d-k)}$.
Then we rewrite this expression as 
\[(\lambda_{d-q+1}\cdots\lambda_{k-1})^{1/(q-1)}<(\lambda_k\cdots \lambda_d)^{(q-d+k-1)/((d-k)(q-1))}=(\lambda_k\cdots \lambda_d)^{1/(d-k)-1/(q-1)}\]
 and further as $(\lambda_k\cdots \lambda_d)^{-1/(d-k)}<(\lambda_{d-q+1}\cdots\lambda_d)^{-1/(q-1)}=\alpha$.
However, this is a contradiction with the definition of~$\alpha$. 

Now we show that if the inequality is satisfied for some $i \in \{4,\dots,d-k+2\}$, then it is true also for $i-1$.
Assume that we have $(\lambda_{d-i+2}\cdots\lambda_d)^{(q-i+1)/(i-2)} \leq \lambda_{d-q+1} \cdots \lambda_{d-i+1}$ and suppose for contradiction that $(\lambda_{d-i+3}\cdots\lambda_d)^{(q-i+2)/(i-3)} > \lambda_{d-q+1} \cdots \lambda_{d-i+2}$.
We rewrite the second inequality as $(\lambda_{d-i+3}\cdots\lambda_d) > (\lambda_{d-q+1} \cdots \lambda_{d-i+2})^{(i-3)/(q-i+2)}$.
Then we have
\begin{align*}
(\lambda_{d-i+2}\cdots\lambda_d)^{(q-i+1)/(i-2)} &> \lambda_{d-i+2}^{(q-i+1)/(i-2)}\cdot (\lambda_{d-q+1}\cdots\lambda_{d-i+2})^{(i-3)(q-i+1)/((i-2)(q-i+2))}\\
&=\lambda_{d-i+2}^{(q-1)(q-i+1)/((i-2)(q-i+2))}\cdot (\lambda_{d-q+1}\cdots\lambda_{d-i+1})^{(i-3)(q-i+1)/((i-2)(q-i+2))}.
\end{align*}
Since $\lambda_{d-i+2} \geq \lambda_{d-i+1} \geq \cdots \geq \lambda_{d-q+1}$, we have $\lambda_{d-i+2}^{q-i+1} \geq \lambda_{d-q+1}\cdots\lambda_{d-i+1}$.
Thus we obtain
\begin{align*}
(\lambda_{d-i+2}\cdots\lambda_d)^{(q-i+1)/(i-2)} &>  (\lambda_{d-q+1}\cdots\lambda_{d-i+1})^{(q-1 + (i-3)(q-i+1))/((i-2)(q-i+2))}\\
&= \lambda_{d-q+1}\cdots\lambda_{d-i+1},
\end{align*}
which contradicts our assumption.
\qed
\end{proof}

We use $\Lambda^*$ to denote the \emph{dual lattice of $\Lambda$}.
That is, $\Lambda^*$ is the set of vectors $y$ from $\mathbb{R}^d$ that satisfy $\langle x,y \rangle \in \mathbb{Z}$ for every $x \in \Lambda$. 

In the rest of the section, we use $\mu_i$ to denote $\lambda_i(\Lambda^*,B^d)$ for every $i\in[d]$ and we let
\[\alpha = \min_{d-k+1\leq i \leq d}(\lambda_{d-i+1}\cdots\lambda_d)^{-1/(i-1)}\]
be the parameter from the statement of Theorem~\ref{thm:upper}.
It follows from the results of Mahler~\cite{mahl39} and Banaszczyk~\cite{bana93} that 
\begin{equation}
\label{eq:minimaRevers}
1 \leq \lambda_i \cdot \mu_{d-i+1} \leq d
\end{equation}
holds for every $i\in [d]$.
Observe that $\mu_1 \geq 1$ and $\alpha = \Theta_{d,k}((\mu_1\cdots\mu_q)^{1/(q-1)})$ by~\eqref{eq:minimaRevers} and by the assumption $\lambda_d \leq 1$.
We also recall that $\lambda_1 \leq \dots \leq \lambda_d$ and $\mu_1 \leq \dots \leq \mu_d$.

We now prove Theorem~\ref{thm:upper} by induction on $d-k$.
The case $d-k=1$ is treated similarly as in the proof of Theorem~\ref{thm:Barany} by B\'{a}r\'{a}ny et al.~\cite{bhpt01}.
Let $w_1, \ldots, w_d$ be linearly independent vectors from $\mathbb{R}^d$ such that $w_i \in \Lambda^* \cap \mu_i B^d$ for every $i\in[d]$.
The existence of every $w_i$ is guaranteed from the definition of $\mu_i$.

For a positive real number $\gamma$, we define sets
\[D^+_{\gamma} \colonequals \left\{ \sum_{i=1}^q a_i w_i \colon a_i \in \left[0, \frac{\gamma}{\mu_i}\right] \cap \Zbb \right\} \hskip 0.5cm \text{ and } \hskip 0.5cm
D_{\gamma} \colonequals \left\{ \sum_{i=1}^q a_i w_i \colon a_i \in \left[-\frac{\gamma}{\mu_i}, \frac{\gamma}{\mu_i}\right] \cap \Zbb \right\}.\]

The size of $D^+_\gamma$ is $\prod_{i=1}^q (\lfloor \gamma/\mu_i \rfloor + 1) \geq \prod_{i=1}^q \frac{\gamma}{\mu_i} = \gamma^q/(\mu_1\cdots\mu_q)$.
The inequality~\eqref{eq:minimaRevers} implies $\alpha^{q-1} \leq \mu_1\cdots\mu_q \leq d^q\alpha^{q-1}$.
Thus $|D^+_\gamma| \geq \gamma^q/(d^q \alpha^{q-1})$.
For a sufficiently large constant $c=c(d,k)>0$, the set $D^+ \colonequals D^+_{c\alpha}$ thus satisfies $|D^+| \geq c^q \alpha^q/(d^q\alpha^{q-1}) = c^q\alpha/d^q > 2qc\alpha+1$.
The last inequality follows from our assumption $\lambda_d\leq 1$, as then $\alpha \geq 1$.
We also use the bound $q \geq 2$.
By part~\ref{item-alpha1} of Lemma~\ref{lem:lambdaVsAlpha} and by~\eqref{eq:minimaRevers}, we have $\mu_1 \leq \dots \leq \mu_q \leq d\alpha$.
Thus $\lfloor \gamma / \mu_i\rfloor + 1 \leq 2\gamma/\mu_i$ for every $\gamma \geq d\alpha$ and every $i \in [q]$.
Therefore $|D^+_\gamma| \leq 2^q\gamma^q/(\mu_1\cdots\mu_q) \leq 2^q\gamma^q/\alpha^{q-1}$ and, in particular, $|D^+| \leq 2^qc^q\alpha$. That is, we have
\[
2qc\alpha+1 < |D^+| \le 2^qc^q\alpha.
\]

Let $D \colonequals D_{c\alpha}$.
We show that for every $x \in \Lambda \cap B^d$ there exists $z \in D \setminus \{0\}$ perpendicular to $x$.
Let $x$ be an arbitrary element from $\Lambda \cap B^d$.
For every $y \in D^+$, we have $|\langle x,y\rangle| = |\langle x,\sum_{i=1}^q a_iw_i\rangle| = |\sum_{i=1}^q a_i\langle x,w_i\rangle| \leq \sum_{i=1}^q a_i|\langle x,w_i\rangle|$ for some integers $a_i \in [0,c\alpha/\mu_i]$.
Every $w_i$ is an element of $\mu_i B^d$ and thus the Cauchy--Schwarz inequality implies $|\langle x,w_i\rangle| \leq \mu_i$.
Using $a_i \leq c\alpha/\mu_i$, we thus see that $|\langle x,y\rangle| \leq \sum_{i=1}^q\frac{c\alpha}{\mu_i}\cdot \mu_i = q c \alpha$.
Since $y \in D^+ \subseteq \Lambda^*$, we have $\langle x,y\rangle \in \mathbb{Z}$.
Therefore $\langle x,y\rangle$ attains at most $2qc\alpha+1$ values.
Since $|D^+|>2qc\alpha+1$, the pigeonhole principle implies that there exist distinct $y_1 \in D^+$ and $y_2 \in D^+$ with $\langle x,y_1\rangle=\langle x,y_2\rangle$.
The element $z \colonequals y_1-y_2$ then lies in $D\setminus\{0\}$ and satisfies $\langle x,z\rangle = 0$.

For a vector $z \in \mathbb{R}^d \setminus \{0\}$, we define a hyperplane $H(z) \colonequals \{x \in \mathbb{R}^d \colon \langle x,z\rangle = 0 \}$.
Let $D'$ be the set of primitive points from $D \setminus\{0\}$.
Consider the set $\mathcal{S} \colonequals \{H(z) \colon z \in D'\}$ of hyperplanes in $\mathbb{R}^d$.
Then $\mathcal{S}$ covers $\Lambda \cap B^d$ and contains at most $|D'| < |D| \leq 2^q|D^+| \leq 2^{2q}c^q\alpha=O_d(\alpha^{d-k})$ hyperplanes.
This finishes the base of the induction.

For the inductive step, assume that $d-k \geq 2$.
Consider the set $\mathcal{S}$ of hyperplanes in $\mathbb{R}^d$ that has been constructed in the base of the induction.
For every hyperplane $H \in \mathcal{S}$, let $\Lambda_H$ be the set $\Lambda \cap H$.
Note that $\Lambda_H$ is a lattice of dimension at most $d-1$.
We now proceed inductively and cover each set $\Lambda_H \cap B^d$ using the inductive hypothesis for $\Lambda_H$ and $k$.
Later, we show that the total number of $k$-dimensional subspaces used in the covering of the sets $\Lambda_H\cap B^d$, $H \in \mathcal{S}$, is at most $O_{d,k}(\alpha^{d-k})$.
To do so, we employ the fact that, for every $z \in D'$, the larger $\|z\|$ is, the fewer $k$-dimensional subspaces we need to cover $\Lambda_{H(z)} \cap B^d$.

\begin{lemma}
\label{lem:indukceNadrovina}
Let $z$ be a point from $D'$ and let $\lambda'_i\colonequals\lambda_i(\Lambda_{H(z)},B^d)$ for every $i \in [d-1]$.
If $q \geq d-k+2$ then for every $r \in\{k+1,\dots,d-1\}$, we have
\[
\alpha'_r \colonequals \min_{r-k+1 \leq i \leq r}(\lambda'_{r-i+1}\cdots\lambda' _r)^{-1/(i-1)} \le O_{d,k}\left(\left(\frac{\mu_1 \cdots \mu_q}{\|z\|}\right)^{(d-k-1)/((q-2)(r-k))}\right).
\]
\end{lemma}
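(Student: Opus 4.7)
The plan is to work in the hyperplane $H(z)$ and exploit the duality between successive minima of $\Lambda_{H(z)} := \Lambda \cap H(z)$ and those of its dual lattice in $H(z)$, which coincides with the orthogonal projection $\pi(\Lambda^*)$ onto $H(z)$. The key pieces of information to be collected are (a) the total product $\prod_{j=1}^{d-1}\lambda'_j$, obtained from Minkowski's second theorem, and (b) individual lower bounds on $\lambda'_j$, obtained from projecting the short vectors $w_1,\dots,w_q \in \Lambda^*$ into $H(z)$ and applying the Mahler--Banaszczyk inequality \eqref{eq:minimaRevers} inside $H(z)$.

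First, since $z \in D'$ is primitive in $\Lambda^*$, the map $\Lambda \to \mathbb{Z}$, $v\mapsto \langle v,z\rangle$, is surjective with kernel $\Lambda_{H(z)}$, so $\det(\Lambda_{H(z)}) = \|z\|\cdot\det(\Lambda)$. Combining Minkowski's second theorem (Theorem~\ref{thm:2ndMinkowski}) applied to $\Lambda_{H(z)}$ inside the $(d-1)$-dimensional ball $B^d\cap H(z)$ with the analogous bound for $\Lambda^*$ yields
\[
\prod_{j=1}^{d-1}\lambda'_j \;=\; \Theta_d\!\left(\frac{\|z\|}{\mu_1\cdots\mu_d}\right).
\]

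Second, the projections $\pi(w_1),\dots,\pi(w_q)$ lie in the dual lattice of $\Lambda_{H(z)}$ inside $H(z)$ and have lengths $\|\pi(w_j)\|\le \|w_j\|\le \mu_j$; as $\ker\pi$ is one-dimensional, among any $i+1$ of them at least $i$ are linearly independent, so the successive minima $\nu_i := \lambda_i(\pi(\Lambda^*), B^d\cap H(z))$ satisfy $\nu_i \le \mu_{i+1}$ for every $i\in[q-1]$. The $(d-1)$-dimensional version of \eqref{eq:minimaRevers} gives $\lambda'_j\cdot \nu_{d-j} = \Theta_d(1)$, and hence
\[
\lambda'_j \;\ge\; \Omega_d\!\left(\frac{1}{\mu_{d-j+1}}\right) \qquad \text{for every } j\in\{d-q+1,\dots,d-1\},
\]
a range that covers $\{k,\dots,d-1\}$ thanks to the assumption $q\ge d-k+2$.

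Third, I would take $i=r-k+1$ in the definition of $\alpha'_r$, so that it suffices to lower-bound the single block $\lambda'_k\lambda'_{k+1}\cdots\lambda'_r$. Writing this block as the total product from step~one divided by the complementary block $\bigl(\prod_{j<k}\lambda'_j\bigr)\bigl(\prod_{j>r}\lambda'_j\bigr)$, I would upper-bound the complementary block by combining the monotonicity $\lambda'_1\le\cdots\le\lambda'_{d-1}$ with the individual lower bounds of step~two (taken in reciprocal form to produce upper bounds on $\lambda'_{j+1}\cdots\lambda'_{d-1}$ via the product/quotient identity). Lemma~\ref{lem:lambdaVsAlpha}\,\ref{item-alpha3} would then be invoked to repackage the resulting product of $\mu_i$'s with large indices into a product involving $\mu_1\cdots\mu_q$ raised to a controlled exponent, and \eqref{eq:minimaRevers} would convert everything consistently to the $\mu$-side.

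The main obstacle is precisely this last bookkeeping step: arranging the exponents so that the combination of the two kinds of bounds on $\lambda'_j$ produces exactly $(d-k-1)/((q-2)(r-k))$ as the exponent on $\mu_1\cdots\mu_q/\|z\|$. The bound from step~one contributes a $\|z\|$ with exponent that must survive the division, while the tail-versus-head trade in Lemma~\ref{lem:lambdaVsAlpha}\,\ref{item-alpha3} is the mechanism converting the $\mu_{d-i+2}\cdots\mu_d$ factors (produced by step~two) into the desired $\mu_1\cdots\mu_q$; the full strength of $q\ge d-k+2$ is used here to guarantee that this conversion is available in the correct index range.
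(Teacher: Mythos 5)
Your step~1 matches the paper's starting point \eqref{eq:prepisNormou}, but the rest of the outline has genuine gaps. First, your step~2 is essentially vacuous: the bound $\lambda'_j \ge \Omega_d(1/\mu_{d-j+1})$ you extract from projecting $w_1,\dots,w_q$ and applying transference inside $H(z)$ already follows from the trivial inclusion $\Lambda_{H(z)}\subseteq\Lambda$ together with \eqref{eq:minimaRevers}, since $\lambda'_j\ge\lambda_j\ge 1/\mu_{d-j+1}$. What the argument actually needs is information in the \emph{opposite} direction, namely $\lambda'_1\cdots\lambda'_{d-q}\le O_{d,k}(\lambda_1\cdots\lambda_{d-q})$; the paper gets this (its \eqref{eq:prvniLambdy}) from the sublattice $\Lambda\cap L$ with $L=\lin(\{w_1,\dots,w_q\})^{\perp}\subseteq H(z)$, and only this yields the two-sided estimate \eqref{eq:mink}, $1/(\lambda'_{d-q+1}\cdots\lambda'_{d-1})=\Theta_{d,k}\bigl(\mu_1\cdots\mu_q/\|z\|\bigr)$, which is the engine of the whole proof. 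Your toolkit contains no mechanism for producing upper bounds on products of the $\lambda'_j$ other than dividing the total product by lower bounds on complementary blocks, and those lower bounds are just $\lambda'_j\ge\lambda_j$.

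Second, your step~3 does not work as described. Choosing $i=r-k+1$ and writing $\lambda'_k\cdots\lambda'_r$ as the total product divided by the head and tail blocks, then bounding each complementary block by the same total-product identity, makes the $\|z\|$ factors cancel the wrong way: one obtains an upper bound on $1/(\lambda'_k\cdots\lambda'_r)$ that \emph{grows} with $\|z\|$, whereas the claimed bound must decrease in $\|z\|$. The paper instead picks $i=q-d+r$ in the minimum (so the relevant block is $\lambda'_{d-q+1}\cdots\lambda'_r$, which is directly comparable to the block in \eqref{eq:mink}), and the passage from that block to the full block is a self-referential inequality obtained from Lemma~\ref{lem:lambdaVsAlpha}\ref{item-alpha3} \emph{combined with the upper bound} $\|z\|\le qc\alpha$ (valid because $z\in D'$), which your outline never invokes; without it the case $r\le d-2$ fails, since the target exponent $(d-k-1)/((q-2)(r-k))$ then strictly exceeds the exponent $1/(q-2)$ that \eqref{eq:mink} alone can deliver. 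You acknowledge the exponent bookkeeping as "the main obstacle," but that bookkeeping is precisely the content of the lemma, and the ingredients you have assembled are not sufficient to carry it out.
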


Note that $q>2$ according to our assumptions $q \geq d-k+1$ and $k \leq d-2$.

\begin{proof}
The vector $z$ partitions the lattice $\Lambda$ into \emph{layers} $(L_i)_{i\in\mathbb{Z}}$, where $L_i \colonequals \{x \in \Lambda \colon \langle x,z\rangle=i\}$.
Since $z$ is primitive, there is a basis $B$ of $\Lambda^*$ with a column $z$ (see Lemma~1 of Section X.4 in~\cite{sieChan89}).
Then $B'\colonequals (B^{-1})^\top$ is a basis of $\Lambda$ and thus there is a column $v$ of $B'$ with $\langle v,z \rangle =1$.
We have $v \in L_1$ and $i \cdot v \in L_i$ for every $i \in\mathbb{Z}$.
Thus every layer $L_i$ satisfies $L_i = i \cdot v + L_0$ and, in particular, $L_0$ is a $(d-1)$-dimensional sublattice of $\Lambda$.
The Euclidean distance between $\aff(L_i)$ and $\aff(L_{i+1})$ is $1/\|z\|$.
This is because, on one hand, $y\colonequals i \cdot z/\|z\|^2 \in \aff(L_i)$, $y'\colonequals (i+1) \cdot z/\|z\|^2 \in \aff(L_{i+1})$, and $\|y-y'\|=1/\|z\|$.
On the other hand, for all $x \in \aff(L_i)$ and $x'\in \aff(L_{i+1})$, the Cauchy--Schwarz inequality implies
\[\|x-x'\|\|z\| \geq |\langle x-x',z\rangle| = |\langle x,z \rangle - \langle x',z \rangle| = |i-(i+1)|=1\]
and hence $\|x-x'\| \geq 1/\|z\|$.

Since $\Lambda_{H(z)} = \{x \in \Lambda \colon \langle x,z\rangle=0\}$, the lattice $\Lambda_{H(z)}$ is the layer $L_0$ of $\Lambda$.
The affine hull of the closest layer is in the Euclidean distance $1/\|z\|$ from~$\aff(\Lambda_{H(z)})$ and it contains a vector $v$ of $\Lambda$ such that $L_i=i\cdot v +L_0$ for every $i\in \mathbb{Z}^d$.
Thus if $B''$ is a basis of $\Lambda_{H(z)}$, then $B''$ with the column $v$ added is a basis of $\Lambda$.
The parallelotope formed by the vectors of $B''$ and $v$ has volume $\det(\Lambda_{H(z)})/\|z\|$. 
Thus $\det(\Lambda_{H(z)})=\|z\|\det(\Lambda)$. 

Using Minkowski's second theorem (Theorem~\ref{thm:2ndMinkowski}) twice and the fact $\det(\Lambda_{H(z)})=\|z\|\det(\Lambda)$, we have 
\begin{equation}
\label{eq:prepisNormou}
\frac{1}{\lambda'_{1} \cdots \lambda'_{d-1}} 
= \Theta_d\left(\frac{\vol(B^{d-1})}{\det(\Lambda_{H(z)})}\right)
= \Theta_d\left(\frac{1}{\det(\Lambda_{H(z)})}\right)
= \Theta_d\left(\frac{\vol(B^d)}{\det(\Lambda) \|z\|}\right) 
= \Theta_d\left(\frac{1}{\lambda_{1} \cdots \lambda_d \|z\| } \right).
\end{equation}

We now show that 
\begin{equation}
\label{eq:prvniLambdy}
\lambda'_1\cdots\lambda'_{d-q} = \Theta_{d,k}(\lambda_1\cdots\lambda_{d-q}).
\end{equation}
Since $\Lambda_{H(z)} \subseteq \Lambda$, we have $\lambda'_i \geq \lambda_i$ for every $i \in[d-q]$ and thus $\lambda'_1\cdots\lambda'_{d-q} \geq \lambda_1\cdots\lambda_{d-q}$.
For the other inequality, let $w_1,\dots,w_d$ be linearly independent vectors from $\Lambda^*$ such that $\|w_i\| = \mu_i$ for every $i \in [d]$.
The existence of every vector $w_i$ is guaranteed by the definition of $\mu_i$.
Clearly, every $w_i$ is primitive.
Let $L$ be the orthogonal complement of $\lin(\{w_1,\dots,w_q\})$ and let $\Lambda_L$ be the $(d-q)$-dimensional lattice $\Lambda \cap L$.
By iterating the proof of~\eqref{eq:prepisNormou} for the vectors $w_1,\dots,w_q$, we obtain 
\[\prod_{i=1}^{d-q} \frac{1}{\lambda_i(\Lambda_L,B^d)} = \Theta_{d,k}\left(\frac{1}{\lambda_1\cdots\lambda_d \cdot \|w_1\|\cdots\|w_q\|}\right) 
= \Theta_{d,k}\left(\frac{1}{\lambda_1\cdots\lambda_d \cdot \mu_1 \cdots \mu_q}\right) = \Theta_{d,k}\left(\frac{1}{\lambda_1 \cdots \lambda_{d-q}}\right),\]
where the last equality follows from~\eqref{eq:minimaRevers}.
Since $z$ lies in $D'$, we have $z=\sum_{i=1}^q a_iw_i$ for some $a_i \in \mathbb{Z}$ and thus $L \subseteq H(z)$ and $\Lambda_L \subseteq \Lambda_{H(z)}$.
In particular, we have
\[\Omega_{d,k}\left(\frac{1}{\lambda_1 \cdots \lambda_{d-q}}\right) \leq \prod_{i=1}^{d-q} \frac{1}{\lambda_i(\Lambda_L,B^d)} \leq \frac{1}{\lambda'_1 \cdots \lambda'_{d-q}},\]
which proves~\eqref{eq:prvniLambdy}.

By combining the estimates~\eqref{eq:prepisNormou} and~\eqref{eq:prvniLambdy}, we obtain
\begin{equation}
\label{eq:mink}
\frac{1}{\lambda'_{d-q+1} \cdots\lambda'_{d-1}} = \Theta_{d,k}\left(\frac{1}{\lambda_{d-q+1} \cdots \lambda_d \|z\| } \right).
\end{equation}

Since $d-k+1 \leq q$ and $k<r$, we have $d-q+2 \leq r \leq d-1$.
If $r=d-1$, then, using the definition of $\alpha'_r$, $d-k+1\leq q$, \eqref{eq:mink}, and~\eqref{eq:minimaRevers}, we have
\[
\alpha'_r \leq \frac{1}{(\lambda'_{d-q+1} \cdots \lambda'_{d-1})^{1/(q-2)}}
= \Theta_{d,k}\left(\frac{1}{(\lambda_{d-q+1} \cdots \lambda_d \|z\| )^{1/(q-2)}}\right) = \Theta_{d,k}\left(\left(\frac{\mu_1\cdots\mu_q}{\|z\|}\right)^{1/(q-2)}\right),
\]
which settles the claim since $d-k-1 = r-k$.

Assume $r \leq d-2$.
Since $z$ lies in $D'$, we have $z = \sum_{i=1}^qa_iw_i$ for some integers $a_i \in [-c\alpha/\mu_i,c\alpha/\mu_i]$.
Then
\[\|z\|=\left\|\sum_{i=1}^q a_iw_i\right\| \leq \sum_{i=1}^q |a_i| \|w_i\| = \sum_{i=1}^q |a_i|\mu_i \leq \sum_{i=1}^q c\alpha = qc\alpha.\]

That is $\|z\| \leq qc\alpha$, and we have
\begin{equation}
\label{eq:vzdalenostOdhad}
\frac{1}{\lambda_{d-q+1} \cdots \lambda_d \|z\|} \geq \frac{1}{\lambda_{d-q+1}\cdots\lambda_dqc\alpha} =
\frac{1}{qc(\lambda_{d-q+1} \cdots \lambda_d)^{(q-2)/(q-1)}}.
\end{equation}

From~\eqref{eq:mink}  and~\eqref{eq:vzdalenostOdhad}, we obtain
\begin{equation}
\label{eq:odhadKonce}
\frac{1}{\lambda'_{d-q+1} \cdots  \lambda'_{d-1}} 
= \Theta_{d,k}\left(\frac{1}{\lambda_{d-q+1} \cdots \lambda_d \|z\| } \right) 
\geq \Omega_{d,k}\left( \frac{1}{(\lambda_{d-q+1} \cdots \lambda_d)^{(q-2)/(q-1)}}\right).
\end{equation}

From~\eqref{eq:odhadKonce}, we have 
\[\frac{1}{\lambda'_{r+1}\cdots\lambda'_{d-1}} 
=\frac{\lambda'_{d-q+1}\cdots\lambda'_r}{\lambda'_{d-q+1}\cdots\lambda'_{d-1}} 
\geq \Omega_{d,k}\left(\frac{\lambda'_{d-q+1}\cdots\lambda'_r}{(\lambda_{d-q+1}\cdots\lambda_r)^{(q-2)/(q-1)}}\cdot\left(\frac{1}{\lambda_{r+1}\cdots\lambda_d}\right)^{(q-2)/(q-1)}\right).\]
Since $k+1 \leq r \leq d-2$, we have $d-r+1 \in \{3,\dots,d-k+2\}$.
Therefore, using the assumption $q \geq d-k+2$, we may apply part~\ref{item-alpha3} of Lemma~\ref{lem:lambdaVsAlpha} with $i \colonequals d-r+1$ and bound the last expression from below by
\begin{align*}
\Omega_{d,k}&\left(\frac{\lambda'_{d-q+1}\cdots\lambda'_r}{(\lambda_{d-q+1}\cdots\lambda_r)^{(q-2)/(q-1)}}\cdot\left(\frac{1}{\lambda_{d-q+1}\cdots\lambda_r}\right)^{\frac{(q-2)(d-r-1)}{(q-1)(q-d+r)}}\right)
\\&
=\Omega_{d,k}\left(\frac{\lambda'_{d-q+1}\cdots\lambda'_r}{(\lambda_{d-q+1}\cdots\lambda_r)^{(q-2)/(q-d+r)}}\right).
\end{align*}
Since $\Lambda_{H(z)} \subseteq \Lambda$, we have $\lambda'_i \geq \lambda_i$ for every $i \in [d-1]$ and thus we can use the obtained lower bound on $1/(\lambda'_{r+1}\cdots\lambda'_{d-1})$ and derive
\begin{align*}
\frac{1}{\lambda'_{d-q+1}\cdots\lambda'_{d-1}} &\geq \Omega_{d,k}\left(\frac{1}{\lambda'_{d-q+1}\cdots\lambda'_r}\cdot\frac{\lambda'_{d-q+1}\cdots\lambda'_r}{(\lambda_{d-q+1}\cdots\lambda_r)^{(q-2)/(q-d+r)}}\right) \\
&\geq 
\Omega_{d,k}\left(\frac{1}{(\lambda'_{d-q+1}\cdots\lambda'_r)^{(q-2)/(q-d+r)}}\right).\end{align*}
In particular, since $q \geq d-k+1$, the definition of $\alpha'_r$ implies
\begin{align*}
\alpha'_r \leq \frac{1}{(\lambda'_{d-q+1}\cdots\lambda'_r)^{1/(q-d+r-1)}} &\leq O_{d,k}\left(\left(\frac{1}{\lambda'_{d-q+1}\cdots\lambda'_{d-1}}\right)^{(q-d+r)/((q-2)(q-d+r-1))}\right)\\
&\leq O_{d,k}\left(\left(\frac{1}{\lambda_{d-q+1}\cdots\lambda_d\|z\|}\right)^{(q-d+r)/((q-2)(q-d+r-1))}\right),
\end{align*}
where the last inequality follows from~\eqref{eq:mink}.

It remains to show that the exponent in the last term is at most $(d-k-1)/((q-2)(r-k))$, as then the rest follows from~\eqref{eq:minimaRevers}.
Using our assumptions $d-k+1 \leq q$ and $r \leq d-2$, we have
\[\frac{(q-d+r)}{(q-d+r-1)} = 1 + \frac{1}{q-d+r-1} \leq 1 + \frac{d-r-1}{r-k} = \frac{d-k-1}{r-k}.\]
\qed
\end{proof}

\begin{corollary}
\label{cor:indukceNadrovina}
If $z$ is a point from $D'$ and $q \geq d-k+2$, then $\Lambda_{H(z)} \cap B^d$ can be covered with
\[
O_{d,k}\left(\left(\frac{\mu_1 \cdots \mu_q}{\|z\|}\right)^{(d-k-1)/(q-2)}\right)
\]
$k$-dimensional linear subspaces of $\mathbb{R}^d$.
\end{corollary}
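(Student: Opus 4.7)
The plan is to obtain this corollary as an immediate consequence of Lemma~\ref{lem:indukceNadrovina} combined with the inductive hypothesis of Theorem~\ref{thm:upper}. Since we are in the inductive step of that theorem with $d-k\ge 2$, the statement is available for any pair $(d',k')$ satisfying $d'-k'<d-k$. The natural candidate is $(d',k')=(d-1,k)$, applied to the $(d-1)$-dimensional lattice $\Lambda_{H(z)}$ sitting inside the hyperplane $H(z)$, together with the body $B^d\cap H(z)$, which is a $(d-1)$-dimensional unit ball centered at the origin of $H(z)$. Because $(d-1)-k = d-k-1 < d-k$, this is a legitimate use of induction, and the target cover dimension $k$ is unchanged.

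The key observation is that, after transposing the index notation, the parameter $\alpha$ appearing in Theorem~\ref{thm:upper} when applied to this triple is exactly $\alpha'_{d-1}$ as defined in Lemma~\ref{lem:indukceNadrovina} with $r=d-1$: both expressions take a minimum over $d-k\le i\le d-1$ of $(\lambda'_{d-i}\cdots\lambda'_{d-1})^{-1/(i-1)}$. Hence the induction hypothesis yields a cover of $\Lambda_{H(z)}\cap B^d$ by $O_{d,k}\bigl((\alpha'_{d-1})^{d-k-1}\bigr)$ $k$-dimensional linear subspaces of $\mathbb{R}^d$.

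Next, I would plug in the estimate of Lemma~\ref{lem:indukceNadrovina} with $r=d-1$: the exponent $(d-k-1)/((q-2)(r-k))$ collapses to $1/(q-2)$, giving
\[
\alpha'_{d-1}\le O_{d,k}\!\left(\left(\frac{\mu_1\cdots\mu_q}{\|z\|}\right)^{1/(q-2)}\right),
\]
and raising to the $(d-k-1)$st power produces the target bound $O_{d,k}\bigl((\mu_1\cdots\mu_q/\|z\|)^{(d-k-1)/(q-2)}\bigr)$.

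The main technical point is verifying that the induction hypothesis is actually applicable, since Theorem~\ref{thm:upper} requires the last successive minimum to be at most $1$, i.e.\ $\lambda'_{d-1}\le 1$. If this fails, then $\Lambda_{H(z)}\cap B^d$ already lies inside a proper linear subspace of $H(z)$, spanned by the directions whose corresponding successive minima are at most $1$; one can then invoke the induction in that reduced dimension, and by monotonicity of the relevant products of successive minima, the resulting bound is at least as strong. This is the only real caveat in the argument; the remainder is merely matching exponents.
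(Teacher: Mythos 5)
Your overall strategy is the paper's: apply the inductive hypothesis of Theorem~\ref{thm:upper} inside $H(z)$ and then feed in Lemma~\ref{lem:indukceNadrovina}. The main line of your argument (the case $\lambda'_{d-1}\le 1$, where you take $r=d-1$ and the exponent collapses to $1/(q-2)$) matches the paper exactly. The issue is precisely the ``caveat'' you flag at the end, which you dispose of too quickly. When $\lambda'_{d-1}>1$, the paper passes to the sublattice $\Lambda'=\Lambda_{H(z)}\cap\lin(\{v'_1,\dots,v'_r\})$ where $r$ is the largest index with $\lambda'_r\le 1$, applies the induction for the pair $(r,k)$, and obtains a cover of size $O_{d,k}((\alpha'_r)^{r-k})$ with $\alpha'_r=\min_{r-k+1\le i\le r}(\lambda'_{r-i+1}\cdots\lambda'_r)^{-1/(i-1)}$. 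Your claim that ``by monotonicity of the relevant products of successive minima, the resulting bound is at least as strong'' is not justified: $\alpha'_r$ and $\alpha'_{d-1}$ are minima over different families of products (ending at $\lambda'_r$ versus $\lambda'_{d-1}$), and there is no simple comparison between $(\alpha'_r)^{r-k}$ and $(\alpha'_{d-1})^{d-1-k}$. The correct repair is simply to invoke Lemma~\ref{lem:indukceNadrovina} with this smaller $r$ rather than with $r=d-1$; the exponent $(d-k-1)/((q-2)(r-k))$ in that lemma is calibrated exactly so that raising to the power $r-k$ yields the uniform bound $(d-k-1)/(q-2)$ for every admissible $r$ --- this is the entire reason the lemma carries the parameter $r$, and proving it for general $r$ requires the full argument via part (ii) of Lemma~\ref{lem:lambdaVsAlpha}, not a monotonicity observation.

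Two smaller omissions: you do not treat the degenerate case $r\le k$ (i.e.\ $\dim(\Lambda_{H(z)}\cap B^d)\le k$), where a single $k$-dimensional subspace suffices; there one must still verify that $(\mu_1\cdots\mu_q/\|z\|)^{(d-k-1)/(q-2)}\ge\Omega_{d,k}(1)$ (which follows from $\|z\|\le qc\alpha$ and $\mu_1\cdots\mu_q=\Theta_{d,k}(\alpha^{q-1})$), since otherwise ``covered with $O(\cdot)$ subspaces'' would be vacuous. And when you restrict to the sublattice of dimension $r$, you should note that $\lambda_i(\Lambda',B^d)=\lambda'_i$ for $i\in[r]$ and that $\Lambda'\cap B^d=\Lambda_{H(z)}\cap B^d$, so that covering $\Lambda'\cap B^d$ indeed covers the original set.
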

\begin{proof}
Following the notation from the statement of Lemma~\ref{lem:indukceNadrovina}, we let $\lambda'_i$ be the $i$th successive minimum $\lambda_i(\Lambda_{H(z)},B^d)$ for every $i \in [d-1]$.
Let $r$ be the largest integer from $[d-1]$ such that $\lambda'_r \leq 1$.
We assume that $r$ exists, as otherwise $\Lambda_{H(z)} \cap B^d = \{0\}$.
From the definition of $\lambda'_r$, we have $\dim(\Lambda_{H(z)} \cap B^d)=r$.
If $r \leq k$, then $\Lambda_{H(z)} \cap B^d$ is contained in a $k$-dimensional linear subspace, which clearly covers $\Lambda_{H(z)} \cap B^d$.
The statement then follows, since $\|z\| \leq qc\alpha$ and $\mu_1\cdots\mu_q = \Theta_{d,k}(\alpha^{q-1})$ imply $(\mu_1 \cdots \mu_q/\|z\|)^{(d-k-1)/(q-2)} \geq \Omega_{d,k}(1)$.
Thus we assume $r>k$.

Let $v'_1,\dots,v'_{d-1}$ be linearly independent vectors such that $v'_i \in \Lambda_{H(z)} \cap (\lambda'_i \cdot B^d)$ for every $i \in [d-1]$.
Let $\Lambda'$ be the $r$-dimensional lattice $\Lambda_{H(z)} \cap \lin(\{v'_1,\dots,v'_r\})$.
Note that $\lambda_i(\Lambda',B^d)=\lambda'_i$ for every $i\in[r]$.
Since $\Lambda' \in\mathcal{L}^r$ and $\lambda_r(\Lambda',B^d)\leq1$, we apply the inductive hypothesis of Theorem~\ref{thm:upper} for $r$ and $k$ and cover $\Lambda' \cap B^d$ with $O_{d,k}((\alpha'_r)^{r-k})$ $k$-dimensional linear subspaces, where $\alpha'_r \colonequals \min_{r-k+1 \leq i \leq r}(\lambda'_{r-i+1}\cdots\lambda' _r)^{-1/(i-1)}$.
By Lemma~\ref{lem:indukceNadrovina}, we have 
\[(\alpha'_r)^{r-k} \le O_{d,k}\left(\left(\frac{\mu_1 \cdots \mu_q}{\|z\|}\right)^{(d-k-1)/(q-2)}\right).\]
The rest follows from $\Lambda_{H(z)} \cap B^d=\Lambda'\cap B^d$.
\qed
\end{proof}

Let $r_1$ and $r_2$ be two nonnegative real numbers such that $r_1 \leq r_2$.
We use $\Sh(r_1,r_2)$ to denote the set $\{x \in \mathbb{R}^d \colon r_1 \leq \|x\| < r_2 \}$.
That is, $\Sh(r_1,r_2)$ is the \emph{spherical shell} bounded by $r_1$ and $r_2$.
The number $r_2-r_1$ is the \emph{width of $\Sh(r_1,r_2)$}.
Note that $\Sh(r_1,r_2)$ is empty if $r_1=r_2$.
Observe that if $r_1 \leq \cdots \leq r_m$ are some nonnegative real numbers, then the shells $\Sh(0,r_1),\Sh(r_1,r_2),\dots,\Sh(r_{m-1},r_m)$ partition the interior of~$B^d(r_m)$.

For $i \in [q-1]$, we let $S_i$ be the set $D' \cap \Sh(\mu_i,\mu_{i+1})$.
Furthermore, we use $S_q$ to denote the set of points from $D'$ that are contained in the closure of the spherical shell $\Sh(\mu_q,qc\alpha)$.

The sets $S_1,\dots,S_q$ then partition $D'$, as there are no points of $\Lambda^*\setminus\{0\}$ in the interior of $B^d(\mu_1)$ from the definition of $\mu_1$ and $\|z\| \leq qc\alpha$ for every $z \in D$. 
We thus have $|D'| = |S_1| + \cdots + |S_q|$.

Let $z$ be an arbitrary element from $D'$.
By Corollary~\ref{cor:indukceNadrovina}, the set $\Lambda_{H(z)} \cap B^d$ can be covered with 
\[c(z) \leq O_{d,k}\left(\left(\frac{\mu_1 \cdots \mu_q}{\|z\|}\right)^{(d-k-1)/(q-2)}\right)\]
$k$-dimensional linear subspaces. 
Since the hyperplanes $H(z)$ with $z \in D'$ cover $\Lambda \cap B^d$, the total number of $k$-dimensional subspaces
needed to cover $\Lambda\cap B^d$ is at most $\sum_{z \in D'} c(z) = \sum_{i=1}^q \sum_{z \in S_i} c(z)$.

To finish the proof of Theorem~\ref{thm:upper}, we show $\sum_{i=1}^q\sum_{z \in S_i}c(z) \leq O_{d,k}(\alpha^{d-k})$.

For $i=1$, we have  $|S_1| \leq 1$, as, by the definitions of~$\mu_1$ and $\mu_2$, the set $S_1$ contains only points from $D$ that lie in $\lin(\{w_1\})$ and the only primitive point satisfying these conditions is $w_1$.
Moreover, every $z \in S_1$ satisfies 
\[c(z) \leq O_{d,k}((\mu_1\cdots\mu_q)^{(d-k-1)/(q-2)}) \leq O_{d,k}\left(\alpha^{(d-k-1)(q-1)/(q-2)}\right),\] since $\|z\|\geq \mu_1 \geq 1$ and $\alpha^{q-1}=\Theta_{d,k}(\mu_1\cdots\mu_q)$.
Since $q \geq d-k+1$, we have 
\[
\frac{(d-k-1)(q-1)}{q-2} \leq \left(1+\frac{1}{q-2}\right)(d-k-1) \leq d-k
\]
and thus $\sum_{z \in S_1}c(z) \leq O_{d,k}(\alpha^{d-k})$.

For $i \in \{2,\dots,q-1\}$, we further refine every set $S_i$ that is determined by a spherical shell of width larger than $1$ into sets $S^1_i,\dots,S^{r_i}_i$, each determined by a spherical shell of width in $(0,1]$, for some positive integer $r_i \leq \mu_{i+1}$.
Such refinement exists, as the Euclidean norm of every vector from $S_i$ is at most $\mu_{i+1}$.
Similarly, if the width of the spherical shell of $S_q$ is larger than $1$, we refine $S_q$ into sets $S_q^1,\dots,S^{r_q}_q$, each determined by a spherical shell of width in $(0,1]$, for some positive integer $r_q \leq qc\alpha$.
We set $S^1_i \colonequals S_i$ and $r_i\colonequals 1$ if the width of $S_i$ is at most $1$.
For all $i \in \{2,\dots,q\}$ and $j \in [r_i]$, let $l_{i,j}$ be the supremum of $\|x\|$ taken over all points $x$ from the spherical shell that determines $S_i^j$.
If the spherical shell is empty, we set $l_{i,j}\colonequals \mu_{i+1}$.
We have $l_{i,j} \in [\mu_i,\mu_{i+1}]$ from the definition of $S^j_i$.
Since the width of every spherical shell $S_i^j$ is at most $1$ and every $z\in D'$ satisfies $\|z\| \geq \mu_1 \geq 1$, every point $z$ from $S^j_i$ also satisfies $l_{i,j}/2 \leq \|z\| \leq l_{i,j}$.
From $\mu_1 \geq 1$, we also have $l_{i,j} \geq 1$.

For every $i\in\{2,\dots,q\}$, the sets $S^1_i,\dots,S^{r_i}_i$ partition $S_i$ and thus we have $|S_i| = \sum_{j=1}^{r_i}|S^j_i|$.
To simplify the notation, we let 
\[c_{i,j} \colonequals \left(\frac{\mu_1 \cdots \mu_q}{l_{i,j}}\right)^{(d-k-1)/(q-2)}\]
for every $i \in \{2,\dots,q\}$ and $j \in [r_i]$.
Then 
\begin{equation}
\label{eq:slupky1}
\sum_{z \in S_i}c(z) = \sum_{j=1}^{r_i} \sum_{z \in S^j_i}c(z) \leq O_{d,k}\left(\sum_{j=1}^{r_i} |S^j_i|c_{i,j}\right)
\end{equation}
for every $i\in \{2,\dots,q\}$.
We show that
\begin{equation}
\label{eq:slupky2}
\sigma \colonequals \sum_{i=2}^q\sum_{j=1}^{r_i} |S^j_i|c_{i,j} \leq 2^q2^{3d}\sum_{i=2}^q \sum_{j=1}^{r_i} \frac{l_{i,j}^{i-1}}{\mu_1\cdots\mu_i}c_{i,j} + O_{d,k}(\alpha^{d-k}).
\end{equation}

By Theorem~\ref{thm:MinkowskiEnumerator}, we have $|\Lambda^* \cap B^d(l_{i,j})| \leq 2^{d-1}\prod_{m=1}^d \lfloor 2/\lambda_m(\Lambda^*,B^d(l_{i,j})) +1\rfloor$  for every  $i \in \{2,\dots,q\}$ and $j \in [r_i]$. 
From the definition of $\mu_1,\dots,\mu_d$, we have $\lambda_m(\Lambda^*,B^d(l_{i,j})) = \mu_m/ l_{i,j}$.
Thus 
\begin{align*}|\Lambda^* \cap B^d(l_{i,j})| \leq 2^{d-1}\prod_{m=1}^d \left\lfloor \frac{2 l_{i,j}}{\mu_m} +1\right\rfloor \leq 2^{3d-2i}\prod_{m=1}^i \left\lfloor \frac{2l_{i,j}}{\mu_m} +1\right\rfloor 
\leq 2^{3d} \frac{l_{i,j}^i}{\mu_1\cdots\mu_i},
\end{align*}
where the second inequality follows from $l_{i,j} \leq \mu_{i+1}\leq \cdots \leq \mu_d$ and the last inequality from $\mu_1 \leq \cdots \leq \mu_i \leq l_{i,j}$.

Let $i$ be an integer from $\{2,\dots,q\}$ and $j$ be an integer from $[r_i]$.
For a nonnegative real number~$r$, let $B^d_o(r)$ be the open $d$-dimensional ball centered in the origin with radius $r$.
We have $|S_i^j| \leq |\Lambda^* \cap B^d(l_{i,j})| - |D' \cap B^d_o(l_{i,j-1})|$ if $j>1$ and $|S_i^j| \leq |\Lambda^* \cap B^d(l_{i,j})| - |D' \cap B^d_o(l_{i-1,r_{i-1}})|$ otherwise.
In the first case, we assume $|D' \cap B^d_o(l_{i,j-1})| \geq 2^{3d}l_{i,j-1}^i/(\mu_1\cdots\mu_i)$, as otherwise, to show~\eqref{eq:slupky2}, we may take $2^{3d}l_{i,j-1}^i/(\mu_1\cdots\mu_i) - |D'\cap B^d_o(l_{i,j-1})|$ points from $\Lambda^* \cap (B^d(qc\alpha) \setminus B_o^d(l_{i,j-1}))$ (or less if there are not that many points) and add $c_{i,j-1}$ to $\sigma$ for each one of them instead of adding at most $c_{i,j}$.
This will still bound $\sigma$ from above, as $c_{i,j-1} \geq c_{i,j}$.
Thus we obtain
\[|S^j_i| \leq 2^{3d}\frac{(l_{i,j}^i -  l^i_{i,j-1})}{\mu_1\cdots\mu_i} \leq 2^q2^{3d}\frac{l_{i,j}^{i-1}}{\mu_1\cdots\mu_i},\]
 where the second inequality follows from $l_{i,j-1}\geq l_{i,j} - 1$ and $(a^i-(a-1)^i) \leq 2^ia^{i-1} \leq 2^qa^{i-1}$ for $a \geq 1$.

In the other case, $j=1$ and $|S_i^1| \leq |\Lambda^* \cap B^d(l_{i,1})| - |D' \cap B^d_o(l_{i-1,r_{i-1}})|$.
If $i\ge 3$, we apply the same argument as in the first case, so we can assume $|D' \cap B^d_o(l_{i-1,r_{i-1}})| \geq 2^{3d}  l_{i-1,r_{i-1}}^{i-1}/(\mu_1\cdots\mu_{i-1})$, since $c_{i-1,r_{i-1}} \geq c_{i,1}$. 
Thus 
\[|S_i^1| \leq 2^{3d}\left(\frac{l_{i,1}^i}{\mu_1\cdots\mu_i} -\frac{ l_{i-1,r_{i-1}}^{i-1}}{\mu_1\cdots\mu_{i-1}}\right)
 = 2^{3d}\left(\frac{l_{i,1}^i}{\mu_1\cdots\mu_i} -\frac{l_{i-1,r_{i-1}}^i}{\mu_1\cdots\mu_i}\right) 
\leq 2^q2^{3d}\frac{l_{i,1}^{i-1}}{\mu_1\cdots\mu_i},\]
where the equality follows from $l_{i-1,r_{i-1}}=\mu_i$ and the inequality from $l_{i-1,r_{i-1}} \geq  l_{i,1}-1$.

If $j=1$ and $i=2$, we use
\[
|S_2^1| \le |\Lambda^* \cap B^d(l_{2,1})| \le 2^{3d} \frac{l_{2,1}^2}{\mu_1 \mu_2}
\]
and thus
\begin{align*}
|S_2^1| c_{2,1} 
&\le 2^{3d} \frac{l_{2,1}^2}{\mu_1 \mu_2} \left(\frac{\mu_1 \cdots \mu_q}{l_{2,1}}\right)^{(d-k-1)/(q-2)} \\
&\in O_{d,k} \left(\frac{\mu_2^2}{\mu_1 \mu_2} \left(\frac{\mu_1 \cdots \mu_q}{\mu_2}\right)^{(d-k-1)/(q-2)} \right)
\qquad \text{since $l_{2,1} \in \Theta(\mu_2)$} \\
&\in O_{d,k} \left(\mu_2 \cdot \left(\mu_3 \cdots \mu_q\right)^{(d-k-1)/(q-2)} \right) 
\qquad \text{since $d-k+1 \le q$ and $\mu_1 \ge 1$} \\
&\in O_{d,k}(\alpha^{d-k}) \qquad \text{since $\mu_m \in O_{d,k}(\alpha)$ for every $m \in [q]$ by~\eqref{eq:minimaRevers} and part~\ref{item-alpha1} of Lemma~\ref{lem:lambdaVsAlpha}.}
\end{align*}

This gives us the inequality~\eqref{eq:slupky2}.

By~\eqref{eq:slupky1} and~\eqref{eq:slupky2}, it remains to prove that the right side of~\eqref{eq:slupky2} is at most $O_{d,k}(\alpha^{d-k})$.
We do so by estimating each term $\sigma_i \colonequals \sum_{j=1}^{r_i} \frac{l_{i,j}^{i-1}}{\mu_1\cdots\mu_i}c_{i,j}$ with $O_{d,k}(\alpha^{d-k})$ for $i=2,\dots,q$.

For $i=q$, we have 
\[\sigma_q = \sum_{j=1}^{r_q} \frac{l_{q,j}^{q-1}}{\mu_1 \cdots \mu_q}\left(\frac{\mu_1 \cdots \mu_q}{l_{q,j}}\right)^{(d-k-1)/(q-2)}
= \sum_{j=1}^{r_q} \frac{l_{q,j}^{q-1-(d-k-1)/(q-2)}}{\mu_1 \cdots \mu_q}(\mu_1 \cdots \mu_q)^{(d-k-1)/(q-2)}.\]
We have $q-1-(d-k-1)/(q-2) \geq 1$ from $q \geq d-k+1$ and $q > 2$.
Moreover, $\alpha^{q-1} = \Theta_{d,k}(\mu_1\cdots\mu_q)$, $l_{q,j} \leq qc\alpha$, and $r_q \leq qc\alpha$.
Thus
\[\sigma_q \leq O_{d,k}\left(\frac{\alpha^{q-(d-k-1)/(q-2)}}{\alpha^{q-1}}(\alpha^{q-1})^{(d-k-1)/(q-2)}\right)
= O_{d,k}\left(\frac{\alpha^{q}}{\alpha^{q-1}}(\alpha^{q-2})^{(d-k-1)/(q-2)}\right)
= O_{d,k}\left(\alpha^{d-k}\right).\]

For $2\leq i<q$, we obtain 
\[\sigma_i = \sum_{j=1}^{r_i} \frac{l_{i,j}^{i-1}}{\mu_1 \cdots \mu_i} \left(\frac{\mu_1 \cdots \mu_q}{l_{i,j}}\right)^{(d-k-1)/(q-2)}
= \sum_{j=1}^{r_i} \frac{l_{i,j}^{i-1 - (d-k-1)/(q-2)}}{\mu_1 \cdots \mu_i} 
(\mu_1 \cdots \mu_q)^{(d-k-1)/(q-2)}.\]

Since $q \ge d-k+1$ and $i \geq 2$, we have $i-1 - (d-k-1)/(q-2) \geq 0$.
Using $l_{i,j} \leq \mu_{i+1}$ and $r_i \leq \mu_{i+1}$, the summation gives
\[\sigma_i \leq \frac{\mu_{i+1}^{i - (d-k-1)/(q-2)}}{\mu_1 \cdots \mu_i} (\mu_1 \cdots \mu_q)^{(d-k-1)/(q-2)} 
= \frac{\mu_{i+1}^i(\mu_{i+2} \cdots \mu_q)^{(d-k-1)/(q-2)}}
{(\mu_1 \cdots \mu_i)^{(q-d+k-1)/(q-2)}}.\]

By~\eqref{eq:minimaRevers} and part~\ref{item-alpha1} of Lemma~\ref{lem:lambdaVsAlpha}, $\mu_m \leq O_{d,k}(\alpha)$ for every $m \in [q]$. 
This fact together with $\alpha^{q-1}=\Theta_{d,k}(\mu_1\cdots\mu_q)$ gives 
\[\mu_1 \cdots \mu_i 
= \frac{\mu_1 \cdots \mu_q}{\mu_{i+1} \cdots \mu_q} 
\geq \Omega_{d,k}\left(\frac{\alpha^{q-1}}{\alpha^{q-i}}\right)
= \Omega_{d,k}\left(\alpha^{i-1}\right).\]
Thus we have
\[
\sigma_i \le O_{d,k}\left(\frac{\alpha^{i + (q-i-1)(d-k-1)/(q-2)}}{\alpha^{(i-1)(q-d+k-1)/(q-2)}}\right)
= O_{d,k}\left(\alpha^{i + ((q-i-1)(d-k-1) - (i-1)(q-d+k-1))/(q-2)}\right).
\]

To simplify the expression in the exponent of $\alpha$, we rewrite
\begin{align*}
(q-i-1)(d-k-1) - (i-1)(q-d+k-1) 
&= (q-i-1)(d-k-1) + (i-1)((d-k-1)+(2-q)) \\
&= (q-2)(d-k-1) - (i-1)(q-2) \\
&= (q-2)(d-k-i).
\end{align*}

Now we have
\[\sigma_i \le O_{d,k}\left(\alpha^{i+d-i-k}\right)=O_{d,k}\left(\alpha^{d-k}\right).\]

This finishes the proof of Theorem~\ref{thm:upper}.

\subsection{The general case}

Here, we finish the proof of Theorem~\ref{thm:upperGeneral} by extending Theorem~\ref{thm:upper} to arbitrary convex bodies from $\mathcal{K}^d$.
This is done by approximating a given body $K$ from $\mathcal{K}^d$ with ellipsoids.
A \emph{$d$-dimensional ellipsoid} in $\mathbb{R}^d$ is an image of $B^d$ under a nonsingular affine map.
Such approximation exists by the following classical result, called \emph{John's lemma}~\cite{john48}.

\begin{lemma}[{John's lemma~\cite[see Theorem 13.4.1]{mat02}}]
\label{lem:John}
For every positive integer $d$ and every $K \in \mathcal{K}^d$, there is a $d$-dimensional ellipsoid $E$ with the center in the origin that satisfies
\[E/\sqrt{d} \subseteq K\subseteq E.\]
\end{lemma}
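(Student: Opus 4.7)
The plan is to prove John's lemma by constructing a maximum-volume inscribed ellipsoid $E_0 \subseteq K$ (the \emph{John ellipsoid}) and then showing $K \subseteq \sqrt{d}\cdot E_0$; the ellipsoid promised by the statement is $E \colonequals \sqrt{d}\cdot E_0$, so that $E/\sqrt{d}=E_0\subseteq K\subseteq E$.

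First I would establish existence and uniqueness of $E_0$. Parametrizing ellipsoids by a positive definite matrix $A$ and a center $c$, the inclusion $\{y\in\mathbb{R}^d \colon (y-c)^\top A^{-1}(y-c)\leq 1\}\subseteq K$ is a closed condition, and since $K$ is compact, the admissible pairs $(A,c)$ form a compact set. The volume is continuous in $(A,c)$ (proportional to $\sqrt{\det A}$), so a maximizer exists. Uniqueness follows from strict concavity of $A\mapsto \log\det A$ on positive definite matrices: a suitable ``geometric mean'' of two distinct maximizers yields an inscribed ellipsoid of strictly larger volume, a contradiction. Because $K=-K$, the reflection $-E_0$ is again a maximum-volume inscribed ellipsoid, so by uniqueness $E_0=-E_0$, i.e.\ $E_0$ is centered at the origin.

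The main obstacle is the outer bound $K\subseteq \sqrt{d}\cdot E_0$. Applying a volume-preserving affine change of coordinates, I may assume $E_0=B^d$; it then suffices to show that if $B^d$ is the maximum-volume ellipsoid inscribed in a $0$-symmetric convex body $K$, then $K\subseteq \sqrt{d}\cdot B^d$. Suppose for contradiction that some $x\in K$ satisfies $\|x\|>\sqrt{d}$. By symmetry $-x\in K$, hence $\mathrm{conv}(B^d\cup\{x,-x\})\subseteq K$. Writing $u\colonequals x/\|x\|$, I consider the family of $0$-symmetric ellipsoids
\[E(a,b)\colonequals \left\{y\in\mathbb{R}^d \colon \frac{\langle y,u\rangle^2}{a^2}+\frac{\|y\|^2-\langle y,u\rangle^2}{b^2}\leq 1\right\},\]
whose volume equals $a\cdot b^{d-1}\cdot\vol(B^d)$. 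A direct tangency computation along the conical portion of $\mathrm{conv}(B^d\cup\{x,-x\})$ joining $\pm x$ to $B^d$ produces parameters $a>1$ slightly larger than $1$ and $b<1$ slightly smaller than $1$, depending on $\|x\|$, such that $E(a,b)\subseteq \mathrm{conv}(B^d\cup\{x,-x\})\subseteq K$ while $a\cdot b^{d-1}>1$ exactly when $\|x\|>\sqrt{d}$. This contradicts the maximality of $B^d$, completing the proof.
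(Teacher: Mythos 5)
The paper does not prove this lemma at all: it is imported as a black box from Mat\-ou\-\v{s}ek's book (Theorem~13.4.1 there), so there is no internal proof to compare against. Your argument is the standard textbook proof of John's theorem in the symmetric case --- in fact essentially the proof in the cited reference --- and its structure is sound: existence of a maximum-volume inscribed ellipsoid by compactness, uniqueness via strict concavity of $\log\det$ on positive definite matrices (noting that the ``average'' of two inscribed ellipsoids $c_i+A_iB^d$ is contained in $\tfrac12 E_1+\tfrac12 E_2\subseteq K$), centrality from $K=-K$ plus uniqueness, and the outer bound by a volume-increasing perturbation. The one place where you assert rather than prove is the ``direct tangency computation,'' which is the actual content of the lemma; for completeness you should record it. With $s\colonequals\|x\|$, the containment $E(a,b)\subseteq\mathrm{conv}(B^d\cup\{\pm su\})$ is equivalent, by comparing support functions $\sqrt{a^2n_1^2+b^2(1-n_1^2)}\leq\max(1,s|n_1|)$ over unit normals $n$, to the single inequality $a^2/s^2+b^2(1-1/s^2)\leq 1$; maximizing $ab^{d-1}$ under this constraint gives $a^2=s^2/d$ and $b^2=(d-1)s^2/(d(s^2-1))$, with $ab^{d-1}>1$ precisely when $s^2>d$. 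This confirms your claim and closes the gap. Two cosmetic remarks: you should note that $K\in\mathcal{K}^d$ is full-dimensional, so the supremum of inscribed volumes is positive and a maximizing sequence cannot degenerate; and an equally standard alternative is to take $E$ to be the minimum-volume \emph{circumscribed} (L\"owner) ellipsoid and prove $E/\sqrt{d}\subseteq K$ directly, which matches the statement's normalization without the final rescaling $E=\sqrt{d}\cdot E_0$.
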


Let $\Lambda \in \mathcal{L}^d$ be a given lattice and let $\lambda_i \colonequals \lambda_i(\Lambda,K)$ for every $i\in[d]$.
From our assumptions, we know that $\lambda_d \leq 1$.
Let $E$ be the ellipsoid from Lemma~\ref{lem:John}.
Since $E$ is an ellipsoid, there is a nonsingular affine map $h \colon \mathbb{R}^d \to \mathbb{R}^d$ such that $E=h(B^d)$.
Since $E$ is centered in the origin, we see that $h$ is in fact a linear map.
Thus $\Lambda'\colonequals h^{-1}(\Lambda) \in \mathcal{L}^d$.
Observe that we have $\lambda_i = \lambda_i(\Lambda',h^{-1}(K))$ for every $i \in [d]$.

For every $i \in [d]$, we use $\lambda'_i$ to denote the $i$th successive minimum $\lambda_i(\Lambda',B^d)=\lambda_i(\Lambda,E)$.
From the choice of $E$, we have $\lambda_i/\sqrt{d} \leq \lambda'_i \leq \lambda_i$.
In particular, $\lambda'_d \leq 1$.
Thus, by Theorem~\ref{thm:upper}, the set $\Lambda' \cap B^d$ can be covered with $O_{d,k}((\alpha')^{d-k})$ $k$-dimensional linear subspaces, where $\alpha' \colonequals \min_{1 \leq j \leq k}(\lambda'_j\cdots\lambda'_d)^{-1/(d-j)}$.

Since $\lambda_i = \Theta_d(\lambda'_i)$ for every $i \in [d]$, we see that the set $\Lambda' \cap h^{-1}(K)$ can be covered with $O_{d,k}(\alpha^{d-k})$ $k$-dimensional linear subspaces, where $\alpha \colonequals \min_{1 \leq j \leq k}(\lambda_j\cdots\lambda_d)^{-1/(d-j)}$.
Since every nonsingular linear transformation preserves incidences and successive minima and maps a $k$-dimensional linear subspace to a $k$-dimensional linear subspace, the set $\Lambda \cap K$ can be covered with $O_{d,k}(\alpha^{d-k})$ $k$-dimensional linear subspaces.

\section{Proof of Theorem~\ref{thm:lower}}
\label{sec:thmLower}

Let $d$ and $k$ be positive integers satisfying $1 \leq k \leq d-1$ and let $K$ be a body from $\mathcal{K}^d$ with $\lambda_d(\mathbb{Z}^d,K) \leq 1$.
For every $i \in [d]$, we let $\lambda_i$ be the $i$th successive minimum $\lambda_i(\mathbb{Z}^d,K)$.
Let $\varepsilon$ be a number from $(0,1)$.
We use a probabilistic approach to show that there is a set $S \subseteq \mathbb{Z}^d \cap K$ of size at least $\Omega_{d,\varepsilon,k}(((1-\lambda_d)\beta)^{d-k-\varepsilon})$, where
\[\beta \colonequals  \min_{1 \leq j \leq d-1}(\lambda_j\cdots\lambda_d)^{-1/(d-j)},\]
 such that every $k$-dimensional linear subspace contains at most $r-1$ points from $S$.

Note that it is sufficient to prove the statement only for the lattice $\mathbb{Z}^d$. 
For a general lattice $\Lambda \in \mathcal{L}^d$ we can apply a linear transformation $h$ such that $h(\Lambda) = \mathbb{Z}^d$ and then use the result for $\mathbb{Z}^d$ and $h(K)$, since $\lambda_i(\Lambda,K)=\lambda_i(\mathbb{Z}^d,h(K))$ for every $i \in [d]$.
We also remark that in the case $k=d-1$ the stronger lower bound $\Omega_d((1-\lambda_d)\beta)$  from Theorem~\ref{thm:Barany} by B\'{a}r\'any et al.~\cite{bhpt01} applies.

The proof is based on the following two results, first of which is by B\'{a}r\'any et al.~\cite{bhpt01}.

\begin{lemma}[\cite{bhpt01}]
\label{lem:lowerBarany}
For an integer $d \geq 2$ and $K \in \mathcal{K}^d$, if $\lambda_d < 1$ and $p$ is an integer satisfying $1 < p <(1-\lambda_d)\beta/(8d^2)$, then, for every $v \in \mathbb{R}^d$, there exist an integer $1 \leq j < p$ and a point $w \in \mathbb{Z}^d$ with $jv + pw \in K$.
\end{lemma}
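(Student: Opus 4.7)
The plan is to establish Lemma~\ref{lem:lowerBarany} by a volume-pigeonhole argument on the flat torus $T \colonequals \mathbb{R}^d/(p\mathbb{Z}^d)$. Write $\pi \colon \mathbb{R}^d \to T$ for the quotient map and set $\bar u_j \colonequals \pi(jv)$ for $j \in \{0, 1, \ldots, p-1\}$. It suffices to exhibit $j_1 > j_2$ in this range whose translates $\bar u_{j_1} + \pi(\tfrac{1}{2} K)$ and $\bar u_{j_2} + \pi(\tfrac{1}{2} K)$ share a point of $T$, because such a coincidence forces $(j_1 - j_2) v \in \tfrac{1}{2}K - \tfrac{1}{2}K + p\mathbb{Z}^d = K + p\mathbb{Z}^d$ (using the central symmetry and convexity of $K$), so that $j \colonequals j_1 - j_2 \in \{1, \ldots, p-1\}$ together with a suitable $w \in \mathbb{Z}^d$ satisfies $jv + pw \in K$.

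To force such a coincidence I would count volumes: if the $p$ translates were pairwise disjoint in $T$ then $p \cdot \vol(\tfrac{1}{2} K) \le \vol(T) = p^d$, i.e., $\vol(K) \le 2^d p^{d-1}$, contradicting the strict inequality $\vol(K) > 2^d p^{d-1}$, which I would verify from the hypothesis as follows. Minkowski's second theorem (Theorem~\ref{thm:2ndMinkowski}) with $\det(\mathbb{Z}^d) = 1$ gives $\vol(K) \ge 2^d/(d!\,\lambda_1 \cdots \lambda_d)$; plugging $j = 1$ into the definition of $\beta$ yields $(\lambda_1 \cdots \lambda_d)^{-1} \ge \beta^{d-1}$, so $\vol(K) \ge 2^d \beta^{d-1}/d!$. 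The required inequality thus reduces to $\beta/p > (d!)^{1/(d-1)}$, and since $(d!)^{1/(d-1)} \le d \le 8d^2$ for $d \ge 2$, the hypothesis $p < (1 - \lambda_d)\beta/(8d^2)$ supplies $\beta/p > 8d^2/(1 - \lambda_d) \ge 8d^2$ with slack to spare.

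The one point of care---and the place where the factor $(1 - \lambda_d)$ does real work---is that the volume count implicitly assumes $\pi$ is injective on $\tfrac{1}{2}K$, which fails precisely when $K$ contains a nonzero lattice multiple of $p$. I would handle this either by a perturbation that shrinks $K$ slightly, exploiting the $(1 - \lambda_d)$ headroom so that the injective case applies, or by a direct reduction that uses such an offending lattice vector to split off a one-dimensional sublattice and recurse. Beyond this bookkeeping, the argument is a routine Blichfeldt-style pigeonhole combined with Minkowski's second theorem, and I do not foresee any deeper obstacle.
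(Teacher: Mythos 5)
Your reduction to overlapping translates of $\pi\bigl(\tfrac{1}{2}K\bigr)$ in the torus $\mathbb{R}^d/p\mathbb{Z}^d$ is sound, and in the injective case your constant check $\beta/p > 8d^2 \geq (d!)^{1/(d-1)}$ does give $\vol(K) > 2^d p^{d-1}$. But the failure of injectivity of $\pi$ on $\tfrac{1}{2}K$ is not a boundary or bookkeeping issue --- it is the entire difficulty of the lemma, and neither of your proposed repairs closes it. Injectivity fails exactly when $K$ contains a nonzero point of $p\mathbb{Z}^d$, i.e.\ when $\lambda_1 \leq 1/p$, and the hypotheses permit this to fail by an unbounded factor: for $d=2$ take $K = [-n,n]\times[-2,2]$, so that $\lambda_1 = 1/n$, $\lambda_2 = 1/2$, $\beta = 2n$, and every $p$ with $1 < p < n/32$ is admissible, yet $\pi$ is roughly $(n/p)$-to-one on $\tfrac{1}{2}K$. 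Disjointness of the $p$ translates in $T$ only bounds the sum of the \emph{torus} volumes of the images, which here is about $2p^2$ rather than $p \cdot \vol\bigl(\tfrac{1}{2}K\bigr) = pn$; so your displayed inequality ``disjointness implies $p\cdot\vol\bigl(\tfrac{1}{2}K\bigr) \leq p^d$'' is simply false in this regime, and the contradiction you derive from $\vol(K) > 2^dp^{d-1}$ evaporates.

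Your first repair (shrink $K$ slightly, using the $(1-\lambda_d)$ headroom) cannot work: shrinking $K$ by any factor bounded away from zero does not remove the $\approx n/p$ points of $p\mathbb{Z}^d$ it contains. Your second repair (quotient by an offending lattice direction and recurse) is the right instinct, but it is precisely the missing proof: one must control the successive minima and the analogue of $\beta$ for the projected lattice and body, handle the lifting of a solution back through the fiber, and deal with the degenerate base cases. Two symptoms confirm that the hard part has been skipped: you only ever use the $j=1$ term $(\lambda_1\cdots\lambda_d)^{-1/(d-1)} \geq \beta^{d-1}$ of the minimum defining $\beta$, whereas the remaining terms $j \geq 2$ exist exactly to absorb the small $\lambda_i$'s that cause the non-injectivity; and the factor $(1-\lambda_d)$, which the paper stresses is indispensable, never enters your argument at any concrete point.
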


For a prime number $p$, let $\mathbb{F}_p$ be the finite field of size $p$.
The second main ingredient in the proof of Theorem~\ref{thm:lower} is the following lemma.

\begin{lemma}
\label{lem:lowerProbabilistic}
Let $d$ and $k$ be integers satisfying $2 \leq k \leq d-2$ and let $\varepsilon \in (0,1)$.
Then there is a positive integer $p_0 = p_0(d,\varepsilon,k)$ such that for every prime number $p \geq p_0$ there exists a subset $R$ of $\mathbb{F}_p^{d-1}$ of size at least $ p^{d-k-\varepsilon}/2$ such that every $(k-1)$-dimensional affine subspace of $\mathbb{F}_p^{d-1}$ contains at most $r-1$ points from $R$ for $r \colonequals \lceil k(d-k+1)/\varepsilon \rceil$. 
\end{lemma}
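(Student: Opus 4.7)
The plan is to construct the set $R$ by the standard probabilistic deletion (alteration) method: include each point of $\mathbb{F}_p^{d-1}$ independently with some probability $\pi$ to get a random set $R_0$, then throw away one point from each ``bad'' $r$-tuple that lies in a common $(k-1)$-flat, and verify that enough points survive.

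First I would carry out the relevant counts over $\mathbb{F}_p^{d-1}$. A $(k-1)$-dimensional affine subspace is specified by a $(k-1)$-dimensional linear direction together with a translate, so by the standard estimate on the Gaussian binomial coefficient $\binom{d-1}{k-1}_p = \Theta_{d,k}(p^{(k-1)(d-k)})$ and the fact that each direction has $p^{d-k}$ translates, the total number of $(k-1)$-flats in $\mathbb{F}_p^{d-1}$ is $\Theta_{d,k}(p^{k(d-k)})$; each such flat contains exactly $p^{k-1}$ points.

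Next I would set $\pi \colonequals p^{-(k-1)-\varepsilon}$ so that $\mathbb{E}[|R_0|] = \pi p^{d-1} = p^{d-k-\varepsilon}$. For any fixed $(k-1)$-flat $F$, the expected number of $r$-element subsets of $R_0$ contained in $F$ is at most $\binom{p^{k-1}}{r}\pi^r \le (p^{k-1}\pi)^r/r! = p^{-\varepsilon r}/r!$. Summing over all flats, the expected total number $B$ of bad $r$-tuples satisfies
\[
\mathbb{E}[B] \le O_{d,k}\bigl(p^{k(d-k) - \varepsilon r}\bigr).
\]
With $r = \lceil k(d-k+1)/\varepsilon\rceil$ we get $\varepsilon r \ge k(d-k+1) = k(d-k)+k$, so the exponent is $k(d-k)-\varepsilon r \le -k \le d-k-\varepsilon$ (as $d \ge k+2 \ge 4$ and $\varepsilon < 1$). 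Hence $\mathbb{E}[B] \le O_{d,k}(p^{-k}) = o(p^{d-k-\varepsilon})$, and so for all primes $p$ larger than some threshold $p_0(d,\varepsilon,k)$ we have $\mathbb{E}[B] \le \tfrac{1}{2}\mathbb{E}[|R_0|]$.

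Finally, I would apply linearity of expectation to conclude that there is a realization of $R_0$ for which $|R_0| - B \ge \tfrac12 p^{d-k-\varepsilon}$; deleting one point from each bad $r$-tuple in $R_0$ yields the desired set $R$ of size at least $p^{d-k-\varepsilon}/2$ with no $(k-1)$-flat containing $r$ of its points. The main obstacle here is purely bookkeeping: getting the counting of $(k-1)$-flats right and picking $\pi$ so that the chosen value of $r$ in the statement is comfortably large enough for the expected number of bad configurations to be dominated by the expected size of $R_0$; the numerical slack between $(k-1)(d-k)$ (what the argument truly needs) and $k(d-k+1)$ (what the statement provides) makes this routine once $p_0$ is taken sufficiently large.
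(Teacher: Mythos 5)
Your proof is correct, and it reaches the conclusion by a genuinely different (and somewhat more elementary) concluding argument than the paper, even though the underlying random construction is identical: both sample each point of $\mathbb{F}_p^{d-1}$ independently with probability $p^{1-k-\varepsilon}$. The paper then argues that the sample $X$ itself works with positive probability, with no deletion: it applies a Chernoff-type tail bound to show that a fixed $(k-1)$-flat contains $r$ or more sampled points with probability less than $p^{-\varepsilon r}$, takes a union bound over its (slightly generous) count of at most $p^{(d-k)+(k-1)(d-k+1)}$ affine flats, and separately invokes Chebyshev's inequality to guarantee $|X|\geq p^{d-k-\varepsilon}/2$ with probability at least $1/2$. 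You instead use the alteration method: a first-moment count of bad $r$-subsets followed by deletion of one point per bad subset, which trades the Chernoff and Chebyshev estimates for plain linearity of expectation. Your flat count $\Theta_{d,k}(p^{k(d-k)})$ is in fact sharper than the paper's bound $p^{k(d-k)+k-1}$, which is why you correctly observe that there is slack in the prescribed $r$: your argument only needs roughly $\varepsilon r>(k-1)(d-k)+\varepsilon$, comfortably below the $k(d-k+1)$ supplied by the statement. All the estimates check out --- in particular $p^{k-1}\pi=p^{-\varepsilon}$ gives $\mathbb{E}[B]\leq O_{d,k}(p^{k(d-k)-\varepsilon r})\leq O_{d,k}(p^{-k})=o(p^{d-k-\varepsilon})$ --- and the deletion step is sound, since removing points cannot create new bad $r$-tuples and any flat containing $r$ surviving points would exhibit an undeleted bad $r$-subset.
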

\begin{proof}
We assume that $p$ is large enough with respect to $d$, $\varepsilon$, and $k$ so that $p^{k-1} > r$.
We set $P \colonequals p^{1-k-\varepsilon}$ and we let $X$ be a subset of $\mathbb{F}^{d-1}_p$ obtained by choosing every point from~$\mathbb{F}^{d-1}_p$ independently at random with the probability $P$.

Let $A$ be a $(k-1)$-dimensional affine subspace of $\mathbb{F}^{d-1}_p$.
Then $|A|=p^{k-1}$.
It is well-known that the number of $(k-1)$-dimensional linear subspaces of $\mathbb{F}^{d-1}_p$ is exactly the \emph{Gaussian binomial coefficient}
\begin{align}
\label{eq:Gaussian}
\begin{split}
\left[\genfrac{}{}{0pt}{}{d-1}{k-1}\right]_p &\colonequals \frac{(p^{d-1}-1)(p^{d-1}-p)\cdots(p^{d-1}-p^{k-2})}{(p^{k-1}-1)(p^{k-1}-p)\cdots(p^{k-1}-p^{k-2})} \\
& \leq \frac{p^{d-1}\cdot p^{d-2}\cdots p^{d-k+1}}{(p^{k-1}-1)(p^{k-2}-1)\cdots(p-1)} \leq p^{(k-1)d - (k-1)k/2 - (k-1)(k-2)/2} = p^{(k-1)(d-k+1)}.
\end{split}
\end{align}
We used the fact $p^{k-i}-1 \geq p^{k-i-1}$ for $k > i$ in the last inequality. 

Since every $(k-1)$-dimensional affine subspace $A$ of $\mathbb{F}^{d-1}_p$ is of the form $A=x+L$ for some $x \in \mathbb{F}^{d-1}_p$ and a $(k-1)$-dimensional linear subspace $L$ of $\mathbb{F}^{d-1}_p$ and $x+L=y+L$ if and only if $x-y \in L$, the total number of $(k-1)$-dimensional affine subspaces of $\mathbb{F}^{d-1}_p$ is $p^{d-k}\left[\genfrac{}{}{0pt}{}{d-1}{k-1}\right]_p$.
This is because by considering pairs $(x,L)$, where $x \in \mathbb{F}_p^{d-1}$ and $L$ is a $(k-1)$-dimensional linear subspace of $\mathbb{F}_p^{d-1}$, every $(k-1)$-dimensional affine subspace $A$ is counted $p^{k-1}$ times.

We use the following Chernoff-type bound (see the last bound of~\cite{hagRu90}) to estimate the probability that $A$ contains at least $r$ points of $X$.
Let $q \in [0,1]$ and let $Y_1,\dots,Y_m$ be independent 0-1 random variables with $\Pr[Y_i=1]=q$ for every $i \in [m]$.
Then, for $mq \leq s < m$, we have
\begin{equation}
\label{eq:Chernoff}
\Pr[Y_1+\cdots+Y_m \geq s] \leq \left(\frac{mq}{s}\right)^s e^{s-mq}.
\end{equation}

Choosing $Y_x$ as the indicator variable for the event $x \in A \cap X$ for each $x \in A$, we have $m=|A|=p^{k-1}$ and $q=P$.
Since $p,r \ge 1$ and $p^{k-1} > r$, we have $p^{-\varepsilon}= mq \leq r < m = p^{k-1}$ and thus the bound~\eqref{eq:Chernoff} implies
\[\Pr[|A \cap X| \geq r] \leq \left(\frac{p^{k-1}P}{r}\right)^re^{r-p^{k-1}P} =  \left(\frac{p^{-\varepsilon}}{r}\right)^re^{r-p^{-\varepsilon}} = p^{-\varepsilon r}e^{r(1 - \ln{r}) - p^{-\varepsilon}} < p^{-\varepsilon r},\]
where the last inequality follows from $r \geq e$, as then $1-\ln{r} \leq 0$.

By the union bound, the probability that there is a $(k-1)$-dimensional  affine subspace $A$ of $\mathbb{F}^{d-1}_p$ with $|A \cap X| \geq r$ is less than
\[p^{d-k}\left[\genfrac{}{}{0pt}{}{d-1}{k-1}\right]_p \cdot p^{-\varepsilon r} \leq p^{(d-k) + (k-1)(d-k+1) - \varepsilon r} \leq p^{k(d-k+1)-1 - k(d-k+1)}=p^{-1},\]
where the first inequality follows from~\eqref{eq:Gaussian} and the second inequality is due to the choice of $r$.
From $p \geq 2$, we see that this probability is less than $1/2$. 

The expected size of $X$ is \[\mathbb{E}[|X|] = |\mathbb{F}^{d-1}_p|\cdot P = p^{d-1}p^{1-k-\varepsilon} = p^{d-k-\varepsilon}.\]
Since $|X| \sim {\rm Bi}(p^{d-1},P)$, the variance of $|X|$ is $p^{d-1}P(1-P) < p^{d-k-\varepsilon}$ and Chebyshev's inequality implies $\Pr[||X|-\mathbb{E}[|X|]| \geq \sqrt{2p^{d-k-\varepsilon}}] < p^{d-k-\varepsilon}/(2p^{d-k-\varepsilon}) =  1/2$.

Thus there is a set $R$ of size at least $p^{d-k-\varepsilon}-\sqrt{2p^{d-k-\varepsilon}} \geq p^{d-k-\varepsilon}/2$ such that every $(k-1)$-dimensional affine subspace of $\mathbb{F}^{d-1}_p$ contains at most $r-1$ points from~$R$.
\qed
\end{proof}

Let $\varepsilon \in (0,1)$ be given.
To derive Theorem~\ref{thm:lower}, we combine Lemma~\ref{lem:lowerBarany} with Lemma~\ref{lem:lowerProbabilistic}.
This is a similar approach as in~\cite{bhpt01}, where the authors derive a lower bound for the case $k=d-1$ by combining Lemma~\ref{lem:lowerBarany} with a construction found by Erd\H{o}s in connection with Heilbronn's triangle problem~\cite{roth51}.

Let $p$ be the largest prime number that satisfies the assumptions of Lemma~\ref{lem:lowerBarany}.
If such $p$ does not exist, then the statement of the theorem is trivial.
By Bertrand's postulate, we have $p > (1-\lambda_d)\beta/(16d^2)$.
We may assume that $p \geq p_0$, where $p_0=p_0(d,\varepsilon,k)$ is the constant from Lemma~\ref{lem:lowerProbabilistic}, since otherwise 
the statement of Theorem~\ref{thm:lower} is trivial.

For $k \geq 2$ and $t \colonequals \lceil p^{d-k-\varepsilon}/2\rceil$, let $R=\{v_1,\dots,v_t\} \subseteq \mathbb{F}^{d-1}_p$ be the set of points from Lemma~\ref{lem:lowerProbabilistic}.
That is, every $(k-1)$-dimensional affine subspace of $\mathbb{F}^{d-1}_p$ contains at most $r-1$ points from $R$ for $r\colonequals \lceil k(d-k+1)/\varepsilon \rceil$.
In particular, every $r$-tuple of points from $R$ contains $k+1$ affinely independent points over the field~$\mathbb{F}_p$.
For $k=1$, we can set $r \colonequals 2$ and let $R$ be the whole set $\mathbb{F}_p^{d-1}$ of size $t \colonequals p^{d-k}=p^{d-1}$.
Then every $r$-tuple of points from $R$ contains two affinely independent points over the field $\mathbb{F}_p$.

For $i=1,\dots,t$, let $u_i \in \mathbb{Z}^d$ be the vector obtained from $v_i$ by adding $1$ as the last coordinate.
From the choice of $R$, every $r$-tuple of points from $\{u_1,\dots,u_t\}$ contains $k+1$ points that are linearly independent over the field $\mathbb{F}_p$.

By Lemma~\ref{lem:lowerBarany}, there exist an integer $1 \leq j_i < p$ and a point $w_i \in \mathbb{Z}^d$ for every $i \in [t]$ such that $u'_i \colonequals j_iu_i + pw_i$ lies in $K$.
We have $u'_i \equiv j_iu_i \pmod{p}$ for every $i \in [t]$ and thus every $r$-tuple of vectors from $S \colonequals \{u'_1,\dots,u'_t\} \subseteq \mathbb{Z}^d$ contains $k+1$ linearly independent vectors over the field $\mathbb{F}_p$, and hence over $\mathbb{R}$.
In other words, every $k$-dimensional linear subspace of $\mathbb{R}^d$ contains at most $r-1$ points from~$S$.
Since $|S|=t = \lceil p^{d-k-\varepsilon}/2\rceil$ and $p > (1-\lambda_d)\beta/(16d^2)$, we have $l(d,k,n,r) \geq \Omega_{d,k}(((1-\lambda_d)\beta)^{d-k-\varepsilon})$.
This completes the proof of Theorem~\ref{thm:lower}.

\section{Proof of Theorem~\ref{thm:coveringAffine}}
\label{sec:thmCoveringAffine}

Let $d$ and $k$ be integers with $1 \leq k \leq d-1$ and let $\Lambda \in \mathcal{L}^d$ and $K \in\mathcal{K}^d$.
We let $\lambda_i\colonequals\lambda_i(\Lambda,K)$ for every $i \in [d]$ and assume that $\lambda_d \leq 1$.
First, we observe that it is sufficient to prove the statement only for $K=B^d$, as we can then strengthen the statement to an arbitrary $K \in \mathcal{K}^d$ using John's lemma (Lemma~\ref{lem:John}) analogously as in the proof of Theorem~\ref{thm:upperGeneral}.

First, we prove the upper bound.
That is, we show that $\Lambda \cap B^d$ can be covered with $O_{d,k}((\lambda_{k+1} \cdots \lambda_d)^{-1})$ $k$-dimensional affine subspaces of $\mathbb{R}^d$.
By Lemma~\ref{lem:projection}, there is a positive integer $r=r(d,k)$ and a projection $p$ of $\mathbb{R}^d$ along $k$ vectors $b_1,\dots,b_k$ from $\Lambda$ onto a $(d-k)$-dimensional linear subspace $N$ of $\mathbb{R}^d$ such that $\Lambda \cap B^d$ is mapped to $\Lambda \cap N \cap B^d(r)$ and such that $\lambda'_i \colonequals \lambda_i(\Lambda \cap N, B^d(r) \cap N) = \Theta_{d,k}(\lambda_{i+k})$ for every $i \in [d-k]$.

For each point $z$ of $\Lambda \cap N \cap B^d(r)$, we define $A(z)$ to be the affine hull of the set $\{z,b_1+z,\dots,b_k+z\}$.
Every $A(z)$ is then a $k$-dimensional affine subspace of $\mathbb{R}^d$ and the set $\mathcal{A} \colonequals \{A(z) \colon z \in \Lambda \cap N \cap B^d(r)\}$ covers $\Lambda \cap B^d$, since $p(z) \in \Lambda \cap N \cap B^d(r)$ for every $z \in \Lambda \cap B^d$.
We have $|\mathcal{A}| = |\Lambda \cap N \cap B^d(r)|$ and, since $\lambda_d \leq 1$ and $\lambda'_1 \leq \cdots\leq\lambda'_{d-k} \leq O_{d,k}(\lambda_d)$, Theorem~\ref{thm:MinkowskiEnumerator} implies $ |\Lambda \cap N \cap B^d(r)| \leq O_{d,k}((\lambda'_1 \cdots \lambda'_{d-k})^{-1})$.
The bound $\lambda'_i \geq \Omega_{d,k}(\lambda_{i+k})$ for every $i \in [d-k]$ then gives $|\mathcal{A}| \leq O_{d,k}((\lambda_{k+1}\cdots\lambda_d)^{-1})$.

To show the lower bound, we prove that we need at least $\Omega_{d,k}((\lambda_{k+1}\cdots\lambda_d)^{-1})$  $k$-dimensional affine subspaces of $\mathbb{R}^d$ to cover $\Lambda \cap B^d$.

Let $A$ be a $k$-dimensional affine subspace of $\mathbb{R}^d$.
We show that $A$ contains at most $O_{d,k}((\lambda_1\cdots\lambda_k)^{-1})$ points from $\Lambda \cap B^d$.
Let $y$ be an arbitrary point from $\Lambda \cap A  \cap B^d$.
Then $A = L + y$, where $L$ is a $k$-dimensional linear subspace of $\mathbb{R}^d$, and $(\Lambda \cap A)-y = \Lambda \cap L$.
For every $i \in [k]$, we let $\lambda'_i \colonequals \lambda_i(\Lambda \cap L, B^d(2))$ and we observe that $\lambda'_i \geq \lambda_i/2$.
By Theorem~\ref{thm:MinkowskiEnumerator}, we have $|\Lambda \cap L \cap B^d(2)| \leq O_{d,k}((\lambda'_1 \cdots \lambda'_s)^{-1})$, where $s$ is the maximum integer $j$ from $[k]$ with $\lambda'_j \leq 1$.
Since $\lambda'_i \geq  \lambda_i/2$ for every $i \in [k]$, we have $|\Lambda \cap L \cap B^d(2)| \leq O_{d,k}((\lambda_1\cdots \lambda_k)^{-1})$.
For every $x \in A \cap B^d$, we have $\|x-y\| \leq \|x\| + \|y\| \leq 2$ and thus $x-y \in L \cap B^d(2)$.
It follows that $(\Lambda \cap A \cap B^d)-y \subseteq \Lambda \cap L \cap B^d(2)$ and thus $|\Lambda \cap A \cap B^d| \leq O_{d,k}((\lambda_1\cdots\lambda_k)^{-1})$.

Let $\mathcal{A}$ be a collection of $k$-dimensional affine subspaces of~$\mathbb{R}^d$ that covers $\Lambda \cap B^d$.
We have $|\mathcal{A}| \geq |\Lambda \cap B^d|/m$, where $m$ is the maximum of $|\Lambda \cap A \cap B^d|$ taken over all subspaces $A$ from $\mathcal{A}$.
We know that $m \leq O_{d,k}((\lambda_1\cdots\lambda_k)^{-1})$.
It is a well-known fact that follows from Minkowski's second theorem (Theorem~\ref{thm:2ndMinkowski}) that $|\Lambda \cap B^d| \geq \Omega_{d,k}((\lambda_1\cdots\lambda_d)^{-1})$.
Thus we obtain
\[|\mathcal{A}| \geq \frac{|\Lambda \cap B^d|}{m} \geq \frac{\Omega_{d,k}((\lambda_1\cdots\lambda_d)^{-1})}{O_{d,k}((\lambda_1\cdots\lambda_k)^{-1})} \geq \Omega_{d,k}((\lambda_{k+1}\cdots\lambda_d)^{-1}),\]
which finishes the proof of  Theorem~\ref{thm:coveringAffine}.

\section{Proofs of Theorems~\ref{thm:incidence} and~\ref{thm:incidenceNondiagonal}}

Assume that we are given integers $d$ and $k$ with $0 \leq k \leq d-2$ and let $\varepsilon$ be a real number in $(0,1)$.
Let $\delta=\delta(d,\varepsilon,k) \in (0,1)$ be a sufficiently small constant.
By~\eqref{eq:affine}, there is a positive integer $r_1=r_1(d,\delta,k)$ and a constant $c_1=c_1(d,\delta,k)$ such that for every $s \in \mathbb{N}$ there is a subset $P$ of $\mathbb{Z}^d \cap B^d(s)$ of size $c_1 \cdot s^{d-k-\delta}$ such that every $k$-dimensional affine subspace of $\mathbb{R}^d$ contains at most $r_1-1$ points from $P$.
 In the case $k=0$, we can clearly obtain the stronger bound $c_1 \cdot s^d$.

By Corollary~\ref{cor:lower}, there is a positive integer $r_2=r_2(d,\delta,k)$ and a constant $c_2=c_2(d,\delta,k)$ such that for every $t \in \mathbb{N}$ there is a subset $N'$ of $\mathbb{Z}^d \cap B^d(t)$ of size $c_2 \cdot t^{d(k+1-\delta)/(d-1)}$ such that every $(d-k-1)$-dimensional linear subspace contains at most $r_2-1$ points from $N'$.
In particular, every $1$-dimensional linear subspace contains at most $r_2-1$ points from $N'$ and thus there is a set $N \subseteq N'$ of size $|N|=|N'|/(r_2-1)=c_2 \cdot t^{d(k+1-\delta)/(d-1)}/(r_2-1)$ containing only primitive vectors.
We note that for $k=0$ we can apply Theorem~\ref{thm:Barany} instead of Corollary~\ref{cor:lower} and obtain the stronger bound $|N| = c_2 \cdot t^{d/(d-1)}/(r_2-1)$.
We let $\mathcal{H}$ be the set of hyperplanes in $\mathbb{R}^d$ with normal vectors from $N$ such that every hyperplane from~$\mathcal{H}$ contains at least one point of $P$.

We show that the graph $G(P,\mathcal{H})$ does not contain $K_{r_1,r_2}$.
If there is an $r_2$-tuple of hyperplanes from~$\mathcal{H}$ with a nonempty intersection, then these hyperplanes have distinct normal vectors that span a linear subspace of dimension at least $d-k$ by the choice of $N$.
The intersection of these hyperplanes is thus an affine subspace of dimension at most $k$.
From the definition of $P$, it contains at most $r_1-1$ points from $P$.

We set 
\[n \colonequals c_1 \cdot s^{d-k-\delta}
\hskip 0.5cm \text{ and } \hskip 0.5cm
m \colonequals \frac{3c_2}{r_2-1} \cdot s \cdot t^{d(k+2-1/d-\delta)/(d-1)}.\]
Then we have $|P|=n$.
For every $p \in P$ and $z \in N$, we have $\langle p, z \rangle \in \mathbb{Z}$ and $|\langle p,z \rangle | \leq \|p\|\|z\| \leq s t$ by the Cauchy--Schwarz inequality.
Thus every point $z$ from $N$  is the normal vector of at most $2s t+1 \leq 3st$ hyperplanes from $\mathcal{H}$.
It follows that
\[|\mathcal{H}| \leq 3 s t |N| =3 s t \frac{c_2 \cdot t^{d(k+1-\delta)/(d-1)}}{r_2-1} = \frac{3c_2}{r_2-1} \cdot s \cdot t^{d(k+2-1/d-\delta)/(d-1)} = m.\]
From the definition of $\mathcal{H}$, the number of incidences between $P$ and $\mathcal{H}$ is at least 
\begin{align}
\label{eq:incidence}
\begin{split}
|P||N| &=  n \cdot \frac{c_2 \cdot t^{d(k+1-\delta)/(d-1)}}{r_2-1} = \Omega_{d,\varepsilon,k}\left( n \cdot (m/s)^{(k+1-\delta)/(k+2-1/d-\delta)}\right)  \\
& = \Omega_{d,\varepsilon,k}\left(n^{1-(k+1-\delta)/((k+2-1/d-\delta)(d-k-\delta))}m^{(k+1-\delta)/(k+2-1/d-\delta)}\right) \\
&\geq \Omega_{d,\varepsilon,k}\left(n^{1-(k+1)/((k+2-1/d)(d-k))-\varepsilon}m^{(k+1)/(k+2-1/d)-\varepsilon}\right),
\end{split}
\end{align}
where the last inequality holds for $\delta$ sufficiently small with respect to $d$, $\varepsilon$, and $k$.
This finishes the proof of Theorem~\ref{thm:incidenceNondiagonal}.

To maximize the number of incidences in the diagonal case, we choose $k \colonequals \lfloor \frac{d-2}{2}\rfloor$.
For $d$ odd, we then have at least
\[\Omega_{d,\varepsilon}\left(n^{1-2(d-1)/((d+1-2/d)(d+3))-\varepsilon}m^{(d-1)/(d+1-2/d)-\varepsilon}\right)\]
incidences by~\eqref{eq:incidence}.
By duality, we may obtain a symmetrical expression by averaging the exponents.
Then we obtain
\[\inc(P,\mathcal{H}) \geq \Omega_{d,\varepsilon}\left((mn)^{(d^2+3d+3)/(d^2+5d+6)-\varepsilon}\right) = \Omega_{d,\varepsilon}\left((mn)^{1-(2d+3)/((d+2)(d+3))-\varepsilon}\right).\]

For $d$ even, the choice of $k$ implies that the number of incidences is at least
\[\Omega_{d,\varepsilon}\left(n^{1-2d/((d+2-2/d)(d+2))-\varepsilon}m^{d/(d+2-2/d)-\varepsilon}\right)\]
by~\eqref{eq:incidence}.
Using the averaging argument, we obtain
\begin{align*}
\inc(P,\mathcal{H}) &\geq \Omega_{d,\varepsilon}\left((mn)^{(d^3+2d^2+d-2)/((d+2)(d^2+2d-2))-\varepsilon}\right) \\
&= \Omega_{d,\varepsilon}\left((mn)^{1-(2d^2+d-2)/((d+2)(d^2+2d-2)) -\varepsilon}\right) .
\end{align*}

This completes the proof of Theorem~\ref{thm:incidence}.
For $d \leq 3$, we have $k=0$ and thus we can get rid of the $\varepsilon$ in the exponent by applying the stronger bounds on $m$ and $n$.

\paragraph{Remark.}
An upper bound similar to~\eqref{eq:incidenceUpper} holds in a much more general setting, where we bound the maximum number of edges in $K_{r,r}$-free \emph{semi-algebraic bipartite graphs $G=(P \cup Q,E)$ in $(\mathbb{R}^d,\mathbb{R}^d)$ with bounded description complexity $t$} (see~\cite{fpssz14} for definitions).
Fox, Pach, Sheffer, Suk, and Zahl~\cite{fpssz14} showed that the maximum number of edges in such graphs with $|P|=n$ and $|Q|=m$ is at most $O_{d,\varepsilon,r,t}((mn)^{1-1/(d+1)+\varepsilon}+m+n)$ for an arbitrarily small constant $\varepsilon>0$.
Theorem~\ref{thm:incidence} provides the best known lower bound for this problem, as every incidence graph $G(P,\mathcal{H})$ of $P$ and $\mathcal{H}$ in $\mathbb{R}^d$ is a semi-algebraic graph in $(\mathbb{R}^d,\mathbb{R}^d)$ with bounded description complexity.

\bibliographystyle{plain}
\bibliography{bibliography}

\begin{thebibliography}{10}

\bibitem{acke15}
Eyal Ackerman.
\newblock On topological graphs with at most four crossings per edge.
\newblock Submitted, preliminary version:
  \url{http://arxiv.org/abs/1509.01932}, 2015.

\bibitem{apfSha07}
Roel Apfelbaum and Micha Sharir.
\newblock Large complete bipartite subgraphs in incidence graphs of points and
  hyperplanes.
\newblock {\em SIAM J. Discrete Math.}, 21(3):707--725, 2007.

\bibitem{bana93}
Wojciech Banaszczyk.
\newblock New bounds in some transference theorems in the geometry of numbers.
\newblock {\em Math. Ann.}, 296(4):625--635, 1993.

\bibitem{bhpt01}
Imre B{\'a}r{\'a}ny, Gergely Harcos, J{\'a}nos Pach, and G{\'a}bor Tardos.
\newblock Covering lattice points by subspaces.
\newblock {\em Period. Math. Hungar.}, 43(1--2):93--103, 2001.

\bibitem{braKna03}
Peter Brass and Christian Knauer.
\newblock On counting point-hyperplane incidences.
\newblock {\em Comput. Geom.}, 25(1--2):13--20, 2003.

\bibitem{braMoPa05}
Peter Brass, William Moser, and J{\'a}nos Pach.
\newblock {\em Research problems in discrete geometry}.
\newblock Springer, New York, 2005.

\bibitem{chaze93}
Bernard Chazelle.
\newblock Cutting hyperplanes for {D}ivide-and-{C}onquer.
\newblock {\em Discrete Comput. Geom.}, 9(2):145--158, 1993.

\bibitem{erd46}
Paul Erd{\H o}s.
\newblock On sets of distances of $n$ points.
\newblock {\em Amer. Math. Monthly}, 53:248--250, 1946.

\bibitem{erickson96}
Jeff Erickson.
\newblock New lower bounds for hopcroft's problem.
\newblock {\em Discrete Comput. Geom.}, 16(4):389--418, 1996.

\bibitem{fpssz14}
Jacob Fox, J{\'a}nos Pach, Adam Sheffer, and Andrew Suk.
\newblock A semi-algebraic version of {Z}arankiewicz's problem.
\newblock {\em J. Eur. Math. Soc. (JEMS)}, 19(6):1785--1810, 2017.

\bibitem{hagRu90}
Torben Hagerup and Christine R{\" u}b.
\newblock A guided tour of {C}hernoff bounds.
\newblock {\em Inform. Process. Lett.}, 33(6):305--308, 1990.

\bibitem{henk02}
Martin Henk.
\newblock Successive minima and lattice points.
\newblock {\em Rend. Circ. Mat. Palermo (2) Suppl.}, 70(I):377--384, 2002.

\bibitem{john48}
Fritz John.
\newblock Extremum problems with inequalities as subsidiary conditions.
\newblock In {\em Studies and Essays, presented to R. Courant on his 60th
  birthday, January 8, 1948}, pages 187--204. Interscience Publ., New York,
  1948.

\bibitem{lefm12}
Hanno Lefmann.
\newblock Extensions of the {N}o-{T}hree-{I}n-{L}ine {P}roblem.
\newblock Submitted, preliminary version:
  \url{www.tu-chemnitz.de/informatik/ThIS/downloads/publications/lefmann_no_three_submitted.pdf},
  2012.

\bibitem{mahl39}
Kurt Mahler.
\newblock Ein {{\" U}}bertragungsprinzip f{\" u}r konvexe {K}{\" o}rper.
\newblock {\em {\v C}asopis P{\v e}st. Mat. Fys.}, 68:93--102, 1939.

\bibitem{mat02}
Ji\v{r}\'{\i} Matou\v{s}ek.
\newblock {\em Lectures on Discrete Geometry}, volume 212 of {\em Graduate
  Texts in Mathematics}.
\newblock Springer-Verlag, New York, 2002.

\bibitem{mink10}
Hermann Minkowski.
\newblock {\em Geometrie der {Z}ahlen}.
\newblock Leipzig, Teubner, 1910.

\bibitem{pachToth97}
J\'{a}nos Pach and G\'{e}za T\'{o}th.
\newblock Graphs drawn with few crossings per edge.
\newblock {\em Combinatorica}, 17:427--439, 1997.

\bibitem{roth51}
Klaus~Friedrich Roth.
\newblock On a problem of {H}eilbronn.
\newblock {\em J. London Math. Soc.}, 26:198--204, 1951.

\bibitem{shef15}
Adam Sheffer.
\newblock Lower bounds for incidences with hypersurfaces.
\newblock {\em Discrete Anal.}, 2016.
\newblock Paper No. 16, 14.

\bibitem{sieChan89}
Carl~Ludwig Siegel and Komaravolu Chandrasekharan.
\newblock {\em Lectures on the geometry of numbers}.
\newblock Springer-Verlag, Berlin, 1989.

\bibitem{szeTro83}
Endre Szemer{\' e}di and William~T. Trotter~Jr.
\newblock Extremal problems in discrete geometry.
\newblock {\em Combinatorica}, 3(3--4):381--392, 1983.

\end{thebibliography}

\end{document}